%% file: ROBUST_SIAM_final_nolineno.tex
\documentclass[onefignum,onetabnum]{siamart250211}

\usepackage[utf8]{inputenc}
\usepackage{amssymb,amsmath,amsfonts}
\usepackage{graphicx,algorithmic}
\usepackage{ragged2e}
\usepackage{float}
\usepackage{caption}
\usepackage{multirow}
\usepackage{booktabs}
\usepackage{tikz}

\usepackage{colortbl}
\makeatletter
\renewcommand{\@makecaption}[2]{%
	\vskip 10pt
	\parbox{\linewidth}{\justifying \textbf{#1.} #2\par}%
}
\makeatother

\input{ex_shared}

\ifpdf
\DeclareGraphicsExtensions{.eps,.pdf,.png,.jpg}
\else
\DeclareGraphicsExtensions{.eps}
\fi
\ifpdf
\usepackage{xcolor}
\hypersetup{
	colorlinks=true,
	linkcolor=black,
	citecolor=black,
	pdftitle={Robust Principal Component Analysis via $\ell_0$ Alternating Minimization},
	pdfauthor={Ze-Kun Lyu, Xin Liu, Qian-Yu Shu, Lei Wang, Wei Zhang}
}
\fi

\begin{document}

\maketitle

\begin{abstract}
Robust principal component analysis concerns the recovery of a low-rank matrix $Z$ and a sparse matrix $S$ from their observed sum $D = Z + S$. In this paper, we study this recovery problem under an $\ell_0$ sparsity penalty and a rank constraint on the low-rank component. By exploiting the entrywise separable structure of the sparse subproblem, we obtain an exact closed-form solution via hard thresholding with box clipping, avoiding the shrinkage bias of $\ell_1$-based methods. The low-rank subproblem is solved by truncated SVD. We prove block-coordinate convergence of the resulting alternating minimization scheme, establish uniqueness of block minimizers under non-degeneracy and singular value gap conditions, establish local linear convergence under a contraction condition on the Fr\'echet derivative of the truncated SVD operator restricted to the sparse support, and provide a second-order optimality analysis via the Riemannian Hessian. Experiments on synthetic data, images, and video show that the proposed algorithm outperforms the existing convex and nonconvex algorithms, most notably in support recovery and in the accuracy-efficiency trade-off.
% with convergence in 17 iterations on video data versus 420 for the convex nuclear-norm relaxation and 551 for ncRPCA. While the convex relaxation achieves slightly higher recovery accuracy on synthetic data, it requires approximately 4 times the computational cost of the proposed method.
\end{abstract}

\begin{keywords}
Robust PCA, $\ell_0$ minimization, alternating minimization, hard thresholding, truncated SVD, convergence analysis
\end{keywords}

\begin{MSCcodes}
15A18, 65F15, 65K05, 90C06
\end{MSCcodes}

\section{Introduction}\label{sec:intro}

Modern data matrices often admit a low-dimensional representation corrupted by a small number of aberrant entries. Robust principal component analysis, henceforth RPCA, formalizes this intuition by postulating that an observed matrix {$D = Z + S \in \mathbb{R}^{m \times n}$} can be well approximated as the sum of two structurally distinct components. The first component, $Z$, is of low rank and encodes the recurring patterns in the data. The second component, $S$, is sparse in the sense that only a small fraction of its entries are nonzero, and these nonzero entries represent outliers or gross errors. The task is to recover both $Z$ and $S$ from $D$ without prior knowledge of the rank of $Z$ or the support of $S$.
{This model has been used in a wide range of applications \cite{Alawode2025,Cai2021,Candes2011,Joukovsky2024,Markowitz2022,Shen2019,Zhang2014}.}

{A classical approach to RPCA is Principal Component Pursuit (PCP)}, pioneered by Cand\`es et al. in their seminal work \cite{Candes2011}. PCP relaxes the rank function to the nuclear norm $\|Z\|_*$ and the $\ell_0$ count to the $\ell_1$ norm $\|S\|_1$, yielding a convex program with strong recovery guarantees. Under an incoherence condition on the low-rank component and bounded per-row/column sparsity on the sparse component, the convex solution coincides with the true decomposition with high probability \cite{Candes2011}. Moreover, the augmented Lagrange multiplier, henceforth ALM, algorithm of Lin et al. \cite{Lin2010} has become the standard solver, iterating between a singular value thresholding step for $Z$ and a soft-thresholding step for $S$. Following this line of research, subsequent work has refined the convex framework in various directions. For example, Peng et al. \cite{Peng2024} introduced a locally smoothed variant with provable stability guarantees, and Cai et al. \cite{Cai2023} proposed a structured gradient scheme that exploits Hankel structure for faster convergence. However, despite these advances, a fundamental limitation persists across {the mainstream} convex methods. {On the one hand, conventional implementations may require repeated singular value decompositions, which can be expensive for large-scale matrices.}
{On the other hand, the \(\ell_1\) penalty induces soft-thresholding and therefore shrinks the magnitudes of nonzero entries toward zero.} Every recovered entry of $S$ is displaced toward zero by the threshold amount, which produces two undesirable consequences.
{The first is that the magnitudes of genuine corruptions are often systematically underestimated, because soft thresholding attenuates rather than annihilates small entries. The second is that the residual values that survive in the estimate of $S$ pollute the recovered support with spurious entries.}
 To mitigate these drawbacks, several authors have proposed nonconvex surrogates that interpolate between $\ell_0$ and $\ell_1$, such as the transformed $\ell_1$ penalty \cite{Zhao2025} and adaptive reweighting schemes \cite{Li2024}, which alleviate but do not fully eliminate this shrinkage bias.

Naturally, how to effectively overcome both the computational bottleneck and the shrinkage bias remains an important issue. To reduce the computational cost, numerous
	nonconvex formulations have been proposed, operating directly on rank constraints or factorized representations \cite{Alawode2025,Cai2021,Han2023,Joukovsky2024,Netrapalli2014,Shen2014,Shen2019,Vandereycken2013,Wang2023,Xu2021}. Many of these methods still update the sparse component through soft thresholding or surrogate penalties, and therefore inherit the same shrinkage bias and support contamination \cite{Candes2011,Li2024,Lin2010,Zhao2025}. The few that employ hard thresholding, such as the alternating-projection scheme of Netrapalli et al. \cite{Netrapalli2014}, apply it as a projection step rather than an exact solution of the underlying sparse subproblem.

Meanwhile, the other shortcoming, namely shrinkage bias, also warrants consideration. This bias can be eliminated by using the \(\ell_0\) norm directly, whose hard-thresholding operator preserves the magnitudes of retained entries. This route is theoretically appealing but computationally challenging, as $\ell_0$ minimization is NP-hard in general when variables are coupled through a linear operator in the data term \cite{DelleDonne2023}. Existing $\ell_0$-type methods therefore resort to approximations. Iterative hard thresholding thresholds a gradient step rather than solving the $\ell_0$ subproblem exactly \cite{Blumensath2009,Foucart2011}, difference-of-convex schemes optimize concave surrogates that are not guaranteed to be block-optimal \cite{Li2022}, and penalty decomposition introduces auxiliary variables \cite{Lu2012}. None of these produces an exact solution of the sparse subproblem.
Therefore, motivated by this observation, we seek a closed-form solution to the corresponding subproblem that is both exact and computationally efficient.

Furthermore, in the specific context of matrix recovery, existing $\ell_0$-based works have notable limitations. For example, Tao et al. \cite{Tao2022} studied column-wise \(\ell_{2,0}\)-norm regularization in a factorization framework. This regularization promotes whole-column sparsity rather than entrywise sparsity and is thus not directly applicable to the RPCA setting, where corruptions can occur at arbitrary positions. Similarly, Li et al. \cite{Li2026} recently analyzed rank and cardinality regularization with matrix factorization, but the factorized representation introduces additional nonconvexity beyond the $\ell_0$ penalty itself, making the convergence analysis of such methods more involved and the solution more sensitive to initialization. Additionally, Lu and Zhang \cite{Lu2012} developed a penalty-decomposition framework for general $\ell_0$ minimization in which the $\ell_0$-norm of a vector appears in the constraints or in the objective. Its numerical study covers sparse logistic regression, sparse inverse covariance selection, and compressed sensing, and contains no matrix decomposition problem. To the best of our knowledge, what has not been exploited is the combination of an explicit elementwise amplitude bound with the exact closed-form $\ell_0$ solution. Under the box constraint $\|S\|_\infty \le \xi$ the entrywise subproblem retains $\operatorname{clip}(r,-\xi,\xi)$ whenever $\tfrac12\bigl(r^2 - (|r|-\xi)_+^2\bigr) > \lambda$, and sets the entry to zero otherwise, where $r = (D-Z)_{ij}$, $(\cdot)_+ := \max\{\cdot,0\}$ denotes the positive part, and $\operatorname{clip}(r,a,b) := \min\{\max\{r,a\},b\}$ denotes clipping to the interval $[a,b]$. The box therefore supplies a prior amplitude bound and simultaneously modifies the effective hard-thresholding level, and it is this exact solvability, together with the continuation schedule on $\lambda$, rather than the pairing of hard thresholding with truncated SVD, that we exploit.

{Crucially}, in the data term $\|D-Z-S\|_F^2$, no matrix or linear operator is applied to $S$, and thus there is no inter-entry coupling in the sparse subproblem. Under the elementwise box constraint $\|S\|_{\infty} \le \xi$, each entry of $S$ can be optimized independently by comparing the cost of setting it to zero against retaining it at the clipped observed value. The global solution is a hard-thresholding operator with box clipping, computable in $O(mn)$ operations. Since hard thresholding preserves the original magnitude of retained entries, the recovered sparse component is entirely free of shrinkage bias, a qualitative distinction from all $\ell_1$-based methods. The low-rank subproblem is solved exactly by truncated singular value decomposition via the Eckart--Young--Mirsky theorem \cite{EckartYoung1936}, which avoids both the full-SVD cost of convex methods and the additional nonconvexity introduced by matrix factorization.

{Building on this structure, we consider the following rank-constrained \(\ell_0\) formulation}
\begin{equation}\label{P1}
\min_{S,Z} \frac{1}{2}\|D - Z - S\|_F^2 + \lambda \|S\|_0, \quad \text{s.t.} \quad \operatorname{rank}(Z) \le p, \quad \|S\|_\infty \le \xi,
\end{equation}
where $\lambda > 0$ is a sparsity penalty parameter, $\xi > 0$ is an amplitude bound, and $p$ is a positive integer with $1 \le p < \min(m, n)$ specifying the rank budget. It should be noticed that the regime of practical interest in this work is precisely $1 \le p < \min(m, n)$. {In addition, the box constraint $\|S\|_\infty \le \xi$ limits the maximum magnitude of sparse entries.}
The formulation involves three parameters, namely the rank budget $p$, the sparsity penalty $\lambda$, and the amplitude bound $\xi$. Practical guidance for selecting these parameters is deferred to Section~\ref{sec:experiments}. We refer to the resulting alternating minimization scheme as $\ell_0$-AltMin.

Our main contributions are summarized as follows.
First, we identify and exploit the entrywise separability of the $\ell_0$ sparse subproblem under the Frobenius-norm loss to obtain its global minimizer in closed form via hard thresholding with box clipping. Second, we handle the rank constraint $\operatorname{rank}(Z) \le p$ through truncated SVD, which yields the exact global minimizer of the $Z$-subproblem by the Eckart--Young--Mirsky theorem.
Third, we establish the convergence theory for the resulting alternating minimization scheme. Specifically, we prove block-coordinate convergence of the scheme, local optimality of block minimizers under non-degeneracy, singular value gap, and box-inactivity conditions, local linear convergence under a contraction condition on the truncated SVD operator restricted to the sparse support, and a second-order optimality analysis via the Riemannian Hessian. This theory complements the convergence theory for alternating minimization in nonconvex settings \cite{Wang2023} by providing uniqueness and stability guarantees that are specific to the $\ell_0$ hard-thresholding structure.
Fourth, numerical experiments on synthetic data, real images, and surveillance video demonstrate substantial improvements of the $\ell_0$ formulation over existing methods.

The rest of the paper is organized as follows. Section~\ref{sec:main} develops the alternating minimization framework and derives the closed-form solutions for both subproblems. Section~\ref{sec:convergence} establishes the convergence theory of the resulting scheme. Section~\ref{sec:experiments} reports numerical experiments on synthetic data, real images, and surveillance video. Section~\ref{sec:conclusion} concludes.

{\paragraph{Notation}} Throughout the paper, $\|\cdot\|_F$ denotes the Frobenius norm and $\|\cdot\|_*$ the nuclear norm, the latter being the sum of all singular values. For a matrix $S$ with entries $s_{ij}$, the quantity $\|S\|_0$ counts the number of nonzero entries, $\|S\|_1 := \sum_{i,j}|s_{ij}|$ is the entrywise $\ell_1$ norm, and $\|S\|_\infty := \max_{i,j}|s_{ij}|$ is the entrywise $\ell_\infty$ norm. The operator norm is denoted $\|\cdot\|_{\mathrm{op}}$, with $\|A\|_{\mathrm{op}}$ equal to the largest singular value of $A$. For a self-adjoint linear operator $A$, we write $A \succ 0$ (resp.\ $A \succeq 0$) to mean that $A$ is positive definite (resp.\ positive semidefinite), and $\lambda_{\max}(A)$ denotes the largest eigenvalue of $A$. The symbol $\boldsymbol{O}$ stands for the zero matrix of appropriate dimension. For a scalar $r$, the sign function is $\operatorname{sgn}(r) := r/|r|$ when $r \neq 0$ and $\operatorname{sgn}(0) := 0$. The positive part of a scalar $r$ is denoted $(r)_+ := \max\{r, 0\}$. The indicator function $\mathbf{1}_A$ equals $1$ if the condition $A$ holds and $0$ otherwise. The $i$-th largest singular value of a matrix $A$ is denoted $\sigma_i(A)$, and the support of $S$, denoted $\operatorname{supp}(S) := \{(i,j) \mid s_{ij} \neq 0\}$, is the index set of its nonzero entries.

\section{Main results}\label{sec:main}

We introduce an alternating minimization framework for solving Problem~(\ref{P1}). The procedure alternately updates $S$ and $Z$ while keeping the other fixed, leading to two subproblems. We write $f(S,Z) := \frac{1}{2}\|D - Z - S\|_F^2 + \lambda\|S\|_0$ for the objective function throughout.
\begin{equation}\label{eq:S_sub}
	S^{(k)} = \mathop{\mathrm{argmin}}_{S \in \mathbb{R}^{m \times n}} \frac{1}{2}\|D - Z^{(k-1)} - S\|_F^2 + \lambda \|S\|_0, \quad \text{s.t.} \quad \|S\|_\infty \le \xi,
\end{equation}
\begin{equation}\label{eq:Z_sub}
	Z^{(k)} = \mathop{\mathrm{argmin}}_{Z \in \mathbb{R}^{m \times n}} \frac{1}{2}\|D - S^{(k)} - Z\|_F^2, \quad \text{s.t.} \quad \operatorname{rank}(Z) \le p.
\end{equation}

In the following subsections, we show that both subproblems admit exact and efficient closed-form solutions. The convergence of the resulting alternating scheme is then established in Section~\ref{sec:convergence}.

\subsection{Exact solution of the \texorpdfstring{$S$}{S}-subproblem}

We begin with the $S$-subproblem. The key observation is that it is entrywise separable. For $R=D-Z$, the Frobenius norm decomposes as $\|R - S\|_F^2 = \sum_{i,j} (r_{ij} - s_{ij})^2$, the $\ell_0$ norm decomposes as $\|S\|_0 = \sum_{i,j} \mathbf{1}_{s_{ij} \neq 0}$, and the box constraint $\|S\|_\infty \le \xi$ is equivalent to $|s_{ij}| \le \xi$ for all $(i,j)$. Therefore, the objective is a sum of independent per-entry functions and the feasible set is a Cartesian product of independent intervals. Thus the $S$-subproblem~\eqref{eq:S_sub} decomposes into $mn$ independent scalar optimization problems, and the matrix-level minimizer is obtained by solving each scalar problem independently.

We now solve the scalar problem explicitly.

\begin{lemma}[Exact scalar $\ell_0$ proximal solution with box constraint]\label{lem:scalar_l0}
	For any $r \in \mathbb{R}$, $\lambda > 0$, and $\xi > 0$, the scalar problem
	\[
	\min_{|s| \le \xi} \phi(s) := \frac{1}{2}(r - s)^2 + \lambda \cdot \mathbf{1}_{s \neq 0}
	\]
	has a closed-form global minimizer
	\begin{equation}\label{eq:scalar_solution}
		s^* = \begin{cases}
			0, & \text{if } \frac{1}{2}r^2 \le \lambda + \frac{1}{2}\bigl(\max\{|r| - \xi, 0\}\bigr)^2, \\
			\operatorname{sgn}(r) \cdot \min\{|r|, \xi\}, & \text{otherwise}.
		\end{cases}
	\end{equation}
\end{lemma}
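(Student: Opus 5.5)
The plan is to split the feasible interval $[-\xi,\xi]$ into the single point $s=0$ and the punctured set $\{0<|s|\le\xi\}$, minimize $\phi$ exactly on each piece, and compare the two optimal values. On $s=0$ the value is simply $\phi(0)=\tfrac12 r^2$. On the punctured set the indicator equals one, so $\phi(s)=\tfrac12(r-s)^2+\lambda$. We minimize the smooth quadratic $\tfrac12(r-s)^2$ over the closed interval $[-\xi,\xi]$ and then check that the minimizer can be taken nonzero.

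\textbf{Step 1 (quadratic on the box).} The minimizer of the strictly convex function $s\mapsto\tfrac12(r-s)^2$ over $[-\xi,\xi]$ is the projection $\operatorname{clip}(r,-\xi,\xi)=\operatorname{sgn}(r)\min\{|r|,\xi\}$. Its value is $\tfrac12(\max\{|r|-\xi,0\})^2$: it vanishes if $|r|\le\xi$, and equals $\tfrac12(|r|-\xi)^2$ otherwise.

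\textbf{Step 2 (the punctured set).} If $r\neq 0$, the projection is nonzero, so it lies in the punctured set. Hence the infimum of $\phi$ over $\{0<|s|\le\xi\}$ is attained there and equals $\lambda+\tfrac12(\max\{|r|-\xi,0\})^2$. If $r=0$, the infimum over the punctured set is $\lambda>0=\phi(0)$ and it need not be attained. This causes no trouble: the first case of \eqref{eq:scalar_solution} holds, since $0\le\lambda$, and $s^*=0$ is then clearly optimal.

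\textbf{Step 3 (comparison).} For $r\neq0$, the global minimum is the smaller of $\tfrac12 r^2$ and $\lambda+\tfrac12(\max\{|r|-\xi,0\})^2$.
\begin{itemize}
\item If $\tfrac12 r^2\le\lambda+\tfrac12(\max\{|r|-\xi,0\})^2$, then $s^*=0$ is a global minimizer. Ties are broken in favor of $0$, which matches the non-strict inequality in \eqref{eq:scalar_solution}.
\item Otherwise, $\operatorname{sgn}(r)\min\{|r|,\xi\}$ is the global minimizer, and it is unique.
\end{itemize}

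The only delicate points are the degenerate case $r=0$, where the nonzero branch has no attained minimum, and the tie convention. Both are handled above. No earlier results from the paper are needed beyond elementary convexity.
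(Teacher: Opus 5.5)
Your proposal is correct and follows essentially the same route as the paper. You compare the zero branch, with cost $\tfrac12 r^2$, against the nonzero branch, whose best value $\lambda+\tfrac12(\max\{|r|-\xi,0\})^2$ is attained at the clipped point $\operatorname{sgn}(r)\min\{|r|,\xi\}$, and you break ties toward $0$; your explicit treatment of $r=0$, where the nonzero-branch infimum is not attained, is slightly more careful than the paper, which only remarks that $s=r$ is nonzero when $r\neq 0$.
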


\begin{proof}
	The proof is given in Appendix~\ref{app:proofs}.
\end{proof}

\begin{remark}[Hard-thresholding interpretation]\label{rem:hard_threshold}
	When $|r| \le \xi$, the solution simplifies to $s^* = r \cdot \mathbf{1}_{|r| > \sqrt{2\lambda}}$, which is the standard hard-thresholding operator with threshold $\sqrt{2\lambda}$. The hard-thresholding operator preserves the original magnitude $r$ of retained entries. When $|r| > \xi$, the nonzero candidate is clipped to $\pm \xi$, and the comparison becomes $|r|$ versus $\xi/2 + \lambda/\xi$. The entry is retained at the box boundary if $|r| > \xi/2 + \lambda/\xi$, and set to zero otherwise.
\end{remark}

\begin{theorem}[Global solution of the $S$-subproblem]\label{thm:S_solution}
	The $S$-subproblem~\eqref{eq:S_sub} is solved exactly in $O(mn)$ time by applying the scalar solution~\eqref{eq:scalar_solution} entrywise. Specifically, with $R = D - Z^{(k-1)}$,
	\begin{equation}\label{eq:S_update}
		s^{(k)}_{ij} = \begin{cases}
			0, & \text{if } \frac{1}{2}r_{ij}^2 \le \lambda + \frac{1}{2}\bigl(\max\{|r_{ij}| - \xi, 0\}\bigr)^2, \\
			\operatorname{sgn}(r_{ij}) \cdot \min\{|r_{ij}|, \xi\}, & \text{otherwise}.
		\end{cases}
	\end{equation}
	Moreover, $S^{(k)}$ is the global minimizer of~\eqref{eq:S_sub}. We denote the entrywise operator defined by~\eqref{eq:S_update} as $\mathcal{H}_{\lambda,\xi}$, so that $S^{(k)} = \mathcal{H}_{\lambda,\xi}(D - Z^{(k-1)})$.
\end{theorem}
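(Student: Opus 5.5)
The plan is to reduce the matrix problem to the scalar problem of Lemma~\ref{lem:scalar_l0} through entrywise separability, and then assemble the scalar minimizers. Fix $Z^{(k-1)}$ and set $R = D - Z^{(k-1)}$. The objective of \eqref{eq:S_sub} can be written as
\[
F(S) := \tfrac12\|R - S\|_F^2 + \lambda\|S\|_0 = \sum_{i,j}\Bigl(\tfrac12 (r_{ij}-s_{ij})^2 + \lambda\,\mathbf{1}_{s_{ij}\neq 0}\Bigr) = \sum_{i,j}\phi_{ij}(s_{ij}),
\]
where $\phi_{ij}$ is the scalar function of Lemma~\ref{lem:scalar_l0} with $r = r_{ij}$. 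The feasible set $\{S : \|S\|_\infty \le \xi\}$ is the Cartesian product $\prod_{i,j}[-\xi,\xi]$, so the constraint imposes no coupling between entries.

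The central step is the elementary fact that a separable sum over a product set is minimized by minimizing each summand separately. Define $S^{(k)}$ entrywise by \eqref{eq:S_update}. By Lemma~\ref{lem:scalar_l0}, each $s^{(k)}_{ij}$ satisfies $|s^{(k)}_{ij}|\le\xi$ and $\phi_{ij}(s^{(k)}_{ij}) \le \phi_{ij}(s_{ij})$ for every $s_{ij}\in[-\xi,\xi]$. Hence $S^{(k)}$ is feasible. For any feasible $S$, summing these $mn$ inequalities gives $F(S^{(k)}) \le F(S)$, so $S^{(k)}$ is a global minimizer. Existence of a minimizer therefore needs no separate compactness or lower-semicontinuity argument, since the explicit scalar minimizer supplies one.

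For the complexity claim, computing $R = D - Z^{(k-1)}$ costs $mn$ subtractions. Evaluating \eqref{eq:S_update} for each entry uses a constant number of operations: an absolute value, a max, a comparison, a min and a sign. The total cost is therefore $O(mn)$. Finally, I would name the entrywise map $\mathcal{H}_{\lambda,\xi}$ and write $S^{(k)} = \mathcal{H}_{\lambda,\xi}(D - Z^{(k-1)})$.

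There is no substantial obstacle, since all the analytic work is in Lemma~\ref{lem:scalar_l0}. The one point to handle carefully is the word ``the'' in ``the global minimizer.'' When $\tfrac12 r_{ij}^2 = \lambda + \tfrac12(\max\{|r_{ij}|-\xi,0\})^2$, both $0$ and the clipped value attain the scalar minimum, so the matrix minimizer need not be unique. I would state that \eqref{eq:S_update} selects a global minimizer, using the tie-breaking convention toward $0$ built into the lemma. Uniqueness holds exactly when no entry lies on this threshold.
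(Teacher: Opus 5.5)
Your proposal is correct and follows essentially the same route as the paper's proof: you use the entrywise separability of the objective over the product set $\prod_{i,j}[-\xi,\xi]$, apply Lemma~\ref{lem:scalar_l0} to each entry, sum the $mn$ scalar inequalities, and count $O(mn)$ operations. Your caveat that ``the'' global minimizer should read ``a'' global minimizer when some entry lies exactly on the threshold is a correct sharpening. It agrees with the tie-breaking toward $0$ in the lemma's proof and with the non-degeneracy condition of Definition~\ref{def:nondegeneracy}, which the paper uses later to obtain uniqueness.
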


\begin{proof}
	The proof can be found in Appendix~\ref{app:proofs}.
\end{proof}

\subsection{Exact solution of the \texorpdfstring{$Z$}{Z}-subproblem}

\begin{theorem}[Global solution of the $Z$-subproblem]\label{thm:Z_solution}
	Let $A := D - S^{(k)}$, and let $A = \sum_{i=1}^{\operatorname{rank}(A)} \sigma_i u_i v_i^{\mathrm{T}}$ be the SVD of $A$ with singular values $\sigma_1 \ge \sigma_2 \ge \cdots \ge \sigma_{\operatorname{rank}(A)} > 0$. The $Z$-subproblem~\eqref{eq:Z_sub} has a global minimizer given by the rank-$p$ truncated SVD of $A$
	\begin{equation}\label{eq:Z_update}
		Z^{(k)} = \sum_{i=1}^{\min(p, \operatorname{rank}(A))} \sigma_i u_i v_i^{\mathrm{T}}.
	\end{equation}
	The minimizer always exists, and is unique when $\operatorname{rank}(A) \le p$ or $\sigma_p(A) > \sigma_{p+1}(A)$. We denote the rank-$p$ truncated SVD operator by $\mathcal{T}_p$, so that $Z^{(k)} = \mathcal{T}_p(A)$.
\end{theorem}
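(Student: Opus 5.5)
The plan is to reduce the $Z$-subproblem~\eqref{eq:Z_sub} to best rank-$p$ approximation of $A := D - S^{(k)}$ in the Frobenius norm, and then invoke the Eckart--Young--Mirsky theorem~\cite{EckartYoung1936}. Since $S^{(k)}$ is fixed, the objective is $\tfrac12\|A - Z\|_F^2$ over the closed set $\mathcal{M}_p := \{Z : \operatorname{rank}(Z)\le p\}$. We prove existence, the minimizing value, and uniqueness in turn.

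\emph{Existence.} $\mathcal{M}_p$ is closed, because the rank is lower semicontinuous. The objective is continuous and coercive. Restricting to the compact sublevel set $\{Z \in \mathcal{M}_p : \|A-Z\|_F \le \|A\|_F\}$, which contains $Z=\boldsymbol{O}$, therefore gives a minimizer.

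\emph{Optimal value.} Eckart--Young--Mirsky states that for every $Z\in\mathcal{M}_p$,
\[
\|A-Z\|_F^2 \ge \sum_{i>p}\sigma_i(A)^2 .
\]
This follows from Weyl's inequality $\sigma_{i+p}(A) \le \sigma_i(A-Z) + \sigma_{p+1}(Z) = \sigma_i(A-Z)$, after summing the squares over $i$. The truncated SVD~\eqref{eq:Z_update} attains this bound, since the residual is $\sum_{i>p}\sigma_i u_i v_i^{\mathrm T}$. When $\operatorname{rank}(A)\le p$, the sum is $A$ itself and the residual is zero, which handles the $\min(p,\operatorname{rank}(A))$ upper limit.

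\emph{Uniqueness.} If $\operatorname{rank}(A)\le p$, the optimal value is $0$, so any minimizer satisfies $Z=A$. In the case $\sigma_p>\sigma_{p+1}$ (which includes $\sigma_{p+1}=0$ with $\sigma_p>0$), let $Z^\star$ be any minimizer. Equality must hold in $\|A-Z^\star\|_F^2=\sum_{i>p}\sigma_i^2$. I will use first-order optimality on the manifold of rank-$r$ matrices at $Z^\star$, where $r=\operatorname{rank}(Z^\star)$. It forces $Z^\star$ to be a stationary point of $\|A-Z\|_F^2$, so the residual $A-Z^\star$ is orthogonal to the tangent space. Concretely, $(A-Z^\star)V_\star=0$ and $U_\star^{\mathrm T}(A-Z^\star)=0$, where $U_\star,V_\star$ span the column and row spaces of $Z^\star$.

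It follows that $Z^\star$ and $A-Z^\star$ have orthogonal row and column spaces. Hence the singular values of $A$ are the union of those of $Z^\star$ and of $A-Z^\star$, and $Z^\star$ is the sum of some $r\le p$ singular triplets of $A$. Optimality then forces these to be exactly the triplets carrying the $p$ largest singular values; if $r<p$, then $\sigma_{r+1}>0$ would give a strict improvement. The gap $\sigma_p>\sigma_{p+1}$ makes the associated left and right singular subspaces uniquely determined. $Z^\star$ equals $P_U A P_V$ for these subspaces, and is therefore unique.

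The main obstacle is making this uniqueness step rigorous. In particular, when singular values among the top $p$ are repeated, the individual $u_i,v_i$ are not unique, so the argument must be phrased through the spectral projectors rather than individual vectors. If that route becomes delicate, I would instead use the Ky Fan characterization $\max_{\operatorname{rank} Z\le p}\langle A,Z\rangle/\|Z\|$ together with its equality case to pin down the projector.
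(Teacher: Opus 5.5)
Your proposal is correct, and it departs from the paper's proof mainly in the uniqueness step.

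\textbf{Existence and optimal value.} Existence is argued the same way in both: the feasible set is closed, and the relevant sublevel set is bounded because $Z=\boldsymbol{O}$ is feasible. For the optimal value, the paper simply cites Eckart--Young--Mirsky. You instead reprove its Frobenius form from Weyl's inequality $\sigma_{i+p}(A)\le\sigma_i(A-Z)+\sigma_{p+1}(Z)$, which makes the argument self-contained.

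\textbf{Uniqueness.} Under $\sigma_p(A)>\sigma_{p+1}(A)$, the paper only observes that the top-$p$ singular vectors are unique up to sign, so that \eqref{eq:Z_update} defines a single matrix. Strictly, this shows only that $\mathcal{T}_p(A)$ does not depend on the chosen SVD. It does not show that no other rank-at-most-$p$ matrix attains the minimum, and the ``up to sign'' phrasing fails when some of $\sigma_1,\dots,\sigma_p$ coincide. Your equality-case analysis is what a genuine uniqueness proof requires. Stationarity gives $U_\star^{\mathrm T}(A-Z^\star)=\boldsymbol{O}$ and $(A-Z^\star)V_\star=\boldsymbol{O}$, so $Z^\star$ is a sum of singular triplets of $A$, and optimality then forces $r=p$ with retained values $\sigma_1,\dots,\sigma_p$. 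Your route therefore buys rigor at the cost of a few lines.

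\textbf{Closing the obstacle you flag.} Spectral projectors settle it directly. Set $R:=A-Z^\star$; the cross terms vanish, so $AA^{\mathrm T}=Z^\star(Z^\star)^{\mathrm T}+RR^{\mathrm T}$ with mutually orthogonal ranges. Hence the column space $\mathcal{U}_\star$ of $Z^\star$ is invariant under $AA^{\mathrm T}$, with eigenvalues $\sigma_1^2,\dots,\sigma_p^2\ge\sigma_p^2>\sigma_{p+1}^2$ there. It therefore equals the spectral subspace of $AA^{\mathrm T}$ for its $p$ largest eigenvalues, which the gap determines uniquely. Finally, $Z^\star=P_{\mathcal{U}_\star}A$ because $P_{\mathcal{U}_\star}R=\boldsymbol{O}$, so the Ky Fan fallback is unnecessary.

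\textbf{Smaller points.} You can skip the manifold machinery in the stationarity step. The matrices $Z^\star+tXV_\star^{\mathrm T}$ and $Z^\star+tU_\star Y^{\mathrm T}$ have rank at most $r\le p$ for all $t$, $X$, $Y$, and differentiating at $t=0$ gives both orthogonality conditions directly. Also, you call the rank-at-most-$p$ set $\mathcal{M}_p$, whereas the paper reserves $\mathcal{M}_p$ for the rank-exactly-$p$ manifold.
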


\begin{proof}
	We defer the proof to Appendix~\ref{app:proofs}.
\end{proof}

\subsection{The \texorpdfstring{$\ell_0$}{l0}-AltMin algorithm}

Combining Theorems~\ref{thm:S_solution} and~\ref{thm:Z_solution}, we obtain the following alternating minimization algorithm.

\begin{algorithm}[htbp]
	\caption{$\ell_0$-Alternating Minimization for RPCA}
	\label{alg:l0altmin}
	\begin{algorithmic}[1]
		\REQUIRE Observation $D \in \mathbb{R}^{m \times n}$, rank $p$, initial penalty $\lambda_0 > 0$, amplitude bound $\xi > 0$, tolerance $\textup{tol}$, max iterations $\textup{maxit}$, continuation factor $\rho \in (0,1)$, warmup iterations $k_{\mathrm{warm}}$, minimum penalty $\lambda_{\min} > 0$.
		
		\STATE Initialize $S^{(0)} \leftarrow \boldsymbol{O}$.
		\STATE Compute the rank-$p$ truncated SVD of $D$ and set $Z^{(0)} \leftarrow \mathcal{T}_p(D)$.
		\STATE Set $\lambda \leftarrow \lambda_0$, $k \leftarrow 0$.
		
		\REPEAT
		\STATE $k \leftarrow k + 1$.
		\STATE Compute $R \leftarrow D - Z^{(k-1)}$. For each $(i,j)$, set
		\[
		s^{(k)}_{ij} \leftarrow \begin{cases}
			0, & \text{if } \frac{1}{2}r_{ij}^2 \le \lambda + \frac{1}{2}\bigl(\max\{|r_{ij}| - \xi, 0\}\bigr)^2, \\
			\operatorname{sgn}(r_{ij}) \cdot \min\{|r_{ij}|, \xi\}, & \text{otherwise}.
		\end{cases}
		\]
		\STATE Compute the rank-$p$ truncated SVD of $D - S^{(k)}$ and set $Z^{(k)} \leftarrow \mathcal{T}_p(D - S^{(k)})$.
		\IF{$k > k_{\mathrm{warm}}$, $\lambda > \lambda_{\min}$, and the relative residual $\|D - S^{(k)} - Z^{(k)}\|_F / \|D\|_F$ has not decreased by at least a factor $0.9$  relative to the previous iteration}
		\STATE $\lambda \leftarrow \max(\rho \cdot \lambda, \lambda_{\min})$. \COMMENT{continuation, gradually decrease penalty}
		\ENDIF
		\UNTIL{$\|D - S^{(k)} - Z^{(k)}\|_F / \|D\|_F < \textup{tol}$ or $k \ge \textup{maxit}$.}
		
		\ENSURE Low-rank matrix $Z^{(k)}$ and sparse matrix $S^{(k)}$ at termination.
	\end{algorithmic}
\end{algorithm}

\begin{remark}[Computational complexity]\label{rem:complexity}
	Each iteration of Algorithm~\ref{alg:l0altmin} requires $O(mn)$ operations for the $S$-update, which applies the entrywise hard-thresholding operator~\eqref{eq:S_update} with a constant number of arithmetic operations per entry.
	
	The $Z$-update requires computing the leading $p$ singular values and corresponding singular vectors of the $m \times n$ matrix $D - S^{(k)}$. A full SVD costs $O(mn\min(m,n))$, but when $p \ll \min(m,n)$, the cost can be reduced to approximately $O(mnp)$ by computing only the leading $p$ components. Two standard approaches achieve this.
	
	The first is the Lanczos bidiagonalization method \cite{GolubKahan1965}, which is a Krylov subspace iteration for computing extremal singular triplets. It builds a bidiagonal reduction of the matrix through repeated matrix-vector products and extracts the leading singular pairs from the resulting small bidiagonal matrix. The cost per iteration is $O(mnp)$ with a small number of iterations, typically $O(\log(1/\varepsilon))$ for relative accuracy $\varepsilon$ in the computed singular values.
	
	The second is the randomized SVD method \cite{Halko2011}, which reduces the matrix to a small subspace by multiplying with a random Gaussian matrix of $p + r$ columns, where $r$ is a small oversampling parameter. The random projection costs $O(mn(p+r))$, the small SVD costs $O((p+r)^2(m+n))$, and the total is $O(mnp)$ for fixed $r$ and accuracy. This approach is particularly effective for large matrices and can be implemented with a single pass over the data.
	
	Both methods reduce the per-iteration cost from $O(mn\min(m,n))$ to approximately $O(mnp)$. The overall per-iteration cost is dominated by the partial SVD computation, consistent with the complexity of alternating minimization methods for robust PCA \cite{Shen2019}.
\end{remark}

\section{Convergence analysis}\label{sec:convergence}

In this subsection, we establish the convergence properties of Algorithm~\ref{alg:l0altmin}. Both subproblems are solved exactly, as global minimizers, which makes the analysis a clean instance of exact block coordinate descent. To begin with, we fix the penalty parameter at some $\lambda > 0$ and study the resulting two-block alternating minimization scheme for the fixed objective $f_\lambda(S, Z) := \frac{1}{2}\|D - Z - S\|_F^2 + \lambda\|S\|_0$. This is the setting in which all the results of the present subsection are stated. Algorithm~\ref{alg:l0altmin} in practice drives $\lambda$ through a continuation schedule, so we must verify that the fixed-$\lambda$ analysis applies to its iterates. This is indeed the case. The continuation phase is finite, because $\lambda$ decreases geometrically and is clamped once it reaches $\lambda_{\min}$, so there is an index $\kappa$ with $\lambda^{(k)} = \lambda_{\min}$ for all $k \ge \kappa$. For $k \ge \kappa$ the algorithm performs exact block coordinate descent on the fixed objective $f_{\lambda_{\min}}$, and the entire fixed-$\lambda$ theory below applies to the tail sequence $\{(S^{(k)}, Z^{(k)})\}_{k \ge \kappa}$. The iterations with $k < \kappa$ merely guide the iterates into a favorable basin of attraction, as in homotopy methods, while preserving boundedness and per-iteration monotonicity. A complete verification is given in Remark~\ref{rem:continuation}. We note at the outset that Problem~(\ref{P1}) is nonconvex due to both the rank constraint and the $\ell_0$ penalty, and the alternating minimization scheme is not guaranteed to find the global optimum. Indeed, $\ell_0$ minimization is NP-hard in general when combined with a linear operator in the data term \cite{DelleDonne2023}. Nevertheless, the block-optimal convergence guarantees below ensure that the algorithm converges to a point at which no single-block improvement is possible, which is the best one can expect from a local method without additional assumptions.

\subsection{Monotone decrease and boundedness}

\begin{theorem}[Monotone decrease]\label{thm:monotone}
	The sequence $\{f(S^{(k)}, Z^{(k)})\}$ generated by Algorithm~\ref{alg:l0altmin} is monotonically non-increasing.
	\[
	f(S^{(k)}, Z^{(k)}) \le f(S^{(k)}, Z^{(k-1)}) \le f(S^{(k-1)}, Z^{(k-1)}), \quad \forall\, k \ge 1.
	\]
\end{theorem}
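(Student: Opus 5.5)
The proof is the standard two-step exact block coordinate descent argument. We take $\lambda$ fixed, as the section stipulates, so that $f = f_\lambda$ is the same function at every iteration. We also note that every iterate is feasible for Problem~(\ref{P1}). The initialization gives $S^{(0)}=\boldsymbol{O}$, so $\|S^{(0)}\|_\infty = 0 \le \xi$, and $Z^{(0)}=\mathcal{T}_p(D)$ has rank at most $p$. Every later $S^{(k)}$ satisfies the box constraint by construction of $\mathcal{H}_{\lambda,\xi}$, and every $Z^{(k)}$ has rank at most $p$ by Theorem~\ref{thm:Z_solution}. Feasibility is what allows the previous block to serve as a competitor in each subproblem.

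For the right inequality, fix $k\ge1$. By Theorem~\ref{thm:S_solution}, $S^{(k)}$ is a global minimizer of $S\mapsto f(S,Z^{(k-1)})$ over the box $\{\|S\|_\infty\le\xi\}$. Since $S^{(k-1)}$ lies in this box, comparing against it gives $f(S^{(k)},Z^{(k-1)})\le f(S^{(k-1)},Z^{(k-1)})$.

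For the left inequality, note that with $S^{(k)}$ fixed, $f(S^{(k)},Z)$ equals $\tfrac12\|D-S^{(k)}-Z\|_F^2$ plus the constant $\lambda\|S^{(k)}\|_0$. Hence minimizing $f(S^{(k)},\cdot)$ over $\{\operatorname{rank}(Z)\le p\}$ is exactly subproblem~\eqref{eq:Z_sub}. By Theorem~\ref{thm:Z_solution}, $Z^{(k)}=\mathcal{T}_p(D-S^{(k)})$ is a global minimizer of it; uniqueness is not needed, only existence. Because $Z^{(k-1)}$ is feasible, we get $f(S^{(k)},Z^{(k)})\le f(S^{(k)},Z^{(k-1)})$. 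Chaining the two inequalities proves the claim.

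The only genuine subtlety, and the step I expect to require care, is the continuation schedule in Algorithm~\ref{alg:l0altmin}. When $\lambda$ is reduced between iterations, the objectives at consecutive steps differ. The plan is to state the theorem for $f_\lambda$ with $\lambda$ frozen, which is valid for all $k\ge\kappa$. For $k<\kappa$, we read it as the chain above with $\lambda=\lambda^{(k)}$, the value used in the $k$-th $S$-update. Decreasing $\lambda$ from $\lambda^{(k-1)}$ to $\lambda^{(k)}$ can only lower $f_\lambda(S^{(k-1)},Z^{(k-1)})$, since the $\ell_0$ term is nonnegative. Consequently the sequence $f_{\lambda^{(k)}}(S^{(k)},Z^{(k)})$ is still non-increasing, and the details are deferred to Remark~\ref{rem:continuation}.
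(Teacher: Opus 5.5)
Your proposal is correct and is the same argument as the paper's proof: compare $S^{(k)}$ against the feasible competitor $S^{(k-1)}$ in the $S$-subproblem and $Z^{(k)}$ against $Z^{(k-1)}$ in the $Z$-subproblem, then chain the two inequalities. Your explicit feasibility check of the competitors, and your handling of the continuation schedule through $f_{\lambda^{(k)}}(S^{(k-1)},Z^{(k-1)}) \le f_{\lambda^{(k-1)}}(S^{(k-1)},Z^{(k-1)})$, are valid additions that the paper leaves implicit or defers to its remark on continuation.
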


\begin{proof}
	The proof is given in Appendix~\ref{app:proofs}.
\end{proof}

\begin{theorem}[Boundedness]\label{thm:boundedness}
	The sequences $\{S^{(k)}\}$ and $\{Z^{(k)}\}$ generated by Algorithm~\ref{alg:l0altmin} are bounded. Consequently, the sequence $\{(S^{(k)}, Z^{(k)})\}$ has at least one accumulation point.
\end{theorem}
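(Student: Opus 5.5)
The plan is to bound the two blocks separately, using the box constraint for $S$ and the nonexpansiveness of truncated SVD for $Z$, and then to invoke Bolzano--Weierstrass.

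\textbf{Bound on $S^{(k)}$.} By Theorem~\ref{thm:S_solution}, every entry of $S^{(k)}$ is either $0$ or $\operatorname{sgn}(r_{ij})\min\{|r_{ij}|,\xi\}$. Hence $\|S^{(k)}\|_\infty \le \xi$ for all $k\ge 1$, and $S^{(0)}=\boldsymbol{O}$. This gives the uniform bound $\|S^{(k)}\|_F \le \sqrt{mn}\,\xi$. This holds regardless of the value of $\lambda$ used at iteration $k$, so the continuation schedule causes no difficulty.

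\textbf{Bound on $Z^{(k)}$.} By Theorem~\ref{thm:Z_solution}, $Z^{(k)}=\mathcal{T}_p(D-S^{(k)})$ for $k\ge1$, and $Z^{(0)}=\mathcal{T}_p(D)$. For any matrix $A$, the singular values of $\mathcal{T}_p(A)$ are $\sigma_1(A),\dots,\sigma_{\min(p,\operatorname{rank}A)}(A)$. Therefore
\[
\|\mathcal{T}_p(A)\|_F^2=\sum_{i\le p}\sigma_i(A)^2\le \|A\|_F^2 .
\]
This holds for any choice of minimizer when singular values tie. Applying it with $A=D-S^{(k)}$ gives
\[
\|Z^{(k)}\|_F \le \|D\|_F+\|S^{(k)}\|_F \le \|D\|_F+\sqrt{mn}\,\xi ,
\]
uniformly in $k$. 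As an alternative route one could use Theorem~\ref{thm:monotone}: $f$ is nonincreasing, so $\|D-Z^{(k)}-S^{(k)}\|_F^2\le 2f(S^{(0)},Z^{(0)})$, and combining this with the bound on $S^{(k)}$ gives a bound on $Z^{(k)}$. However, the monotonicity argument is stated for fixed $\lambda$, and under continuation one would have to check that decreasing $\lambda$ only lowers $f$. The direct SVD bound avoids this issue entirely, so I would use it.

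\textbf{Accumulation point.} The pair sequence lies in a bounded subset of the finite-dimensional space $\mathbb{R}^{m\times n}\times\mathbb{R}^{m\times n}$. By Bolzano--Weierstrass it has a convergent subsequence, and hence at least one accumulation point.

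The only step I expect to need care is the truncated-SVD bound when $\sigma_p=\sigma_{p+1}$ and the minimizer is not unique. Since every minimizer is formed from the top $p$ singular triplets of some SVD, the Frobenius bound holds for each of them. This step is the (mild) obstacle in the argument.
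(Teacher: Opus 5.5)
Your proof is correct and follows the paper's structure. The box constraint gives $\|S^{(k)}\|_F\le\xi\sqrt{mn}$, a $\lambda$-independent per-iteration bound on $Z^{(k)}$ comes from the $Z$-subproblem, and Bolzano--Weierstrass finishes the argument. The one difference is in the $Z$-bound: the paper compares $Z^{(k)}$ with the feasible point $\boldsymbol{O}$ and applies the triangle inequality to get $\|Z^{(k)}\|_F\le 2\|D-S^{(k)}\|_F$. That argument holds for any minimizer without using the singular-value structure, so your tie-case concern never arises there. Your bound $\|\mathcal{T}_p(A)\|_F\le\|A\|_F$ is sharper by a factor of two.
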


\begin{proof}
	The proof can be found in Appendix~\ref{app:proofs}.
\end{proof}

\subsection{Accumulation point optimality}

We now characterize the optimality conditions satisfied by accumulation points of Algorithm~\ref{alg:l0altmin}. Two regularity conditions are needed.

\begin{definition}[Non-degeneracy]\label{def:nondegeneracy}
	A matrix $R \in \mathbb{R}^{m \times n}$ is non-degenerate with respect to the parameters $\lambda$ and $\xi$ if no entry $r_{ij}$ satisfies the equality
	\[
	\frac{1}{2}r_{ij}^2 = \lambda + \frac{1}{2}\bigl(\max\{|r_{ij}| - \xi, 0\}\bigr)^2.
	\]
\end{definition}

\begin{remark}[Interpretation of non-degeneracy]\label{rem:nondegeneracy}
	Recall from Lemma~\ref{lem:scalar_l0} that each scalar subproblem has two candidate solutions, which we refer to as the \emph{zero branch} ($s_{ij} = 0$) and the \emph{non-zero branch} ($s_{ij} \neq 0$). The zero branch yields cost $J_0 = \frac{1}{2}r_{ij}^2$, which incurs no penalty but leaves the full residual, while the non-zero branch retains the entry at its clipped value $s_{ij} = \operatorname{sgn}(r_{ij}) \cdot \min\{|r_{ij}|, \xi\}$ and yields cost $J_1 = \lambda + \frac{1}{2}(\max\{|r_{ij}| - \xi, 0\})^2$, which pays the penalty and reduces the residual. The global minimizer is $s^* = 0$ when $J_0 < J_1$ and $s^* \neq 0$ when $J_0 > J_1$. When $J_0 = J_1$, both branches are globally optimal and the minimizer is not unique. The non-degeneracy condition requires $J_0 \neq J_1$ for every entry, so each scalar subproblem has a unique global minimizer and the optimal branch is well-defined.
	
	Moreover, when $J_0 = J_1$ at some entry, an arbitrarily small perturbation to $r_{ij}$ can flip the optimal decision from $s^* = 0$ to $s^* \neq 0$ or vice versa, causing a discontinuous jump in the solution. By requiring $J_0 \neq J_1$ strictly, non-degeneracy ensures that each entry of $R$ lies a positive distance away from this decision threshold. Consequently, the hard-thresholding operator $\mathcal{H}_{\lambda,\xi}$ defined by~\eqref{eq:S_update} is continuous at $R$, and the support of $S = \mathcal{H}_{\lambda,\xi}(R)$ is locally stable under small perturbations of $R$.

We remark that the non-degeneracy condition holds generically. To see this, consider the threshold boundary equation $\frac{1}{2}r^2 = \lambda + \frac{1}{2}(\max\{|r| - \xi, 0\})^2$ for a single scalar $r$. When $|r| \le \xi$, this equation reduces to $r^2 = 2\lambda$, whose solutions are $r = \pm\sqrt{2\lambda}$. When $|r| > \xi$, the equation becomes $|r| = \xi/2 + \lambda/\xi$, whose solutions are $r = \pm(\xi/2 + \lambda/\xi)$. In both cases, the equation admits only finitely many solutions. Therefore, for each entry $r_{ij}$ of the matrix $R$, the set of values that would place this entry exactly on the threshold boundary is a finite subset of $\mathbb{R}$ and has Lebesgue measure zero. The set of matrices $R \in \mathbb{R}^{m \times n}$ for which at least one entry lies on the boundary is the union over all $mn$ entries of such measure-zero sets, and a finite union of measure-zero sets still has measure zero in $\mathbb{R}^{m \times n}$. For randomly generated data whose entries are drawn from a continuous distribution, the probability of hitting this measure-zero set is zero, so non-degeneracy holds with probability one.
\end{remark}

\begin{definition}[Singular value gap]\label{def:svd_gap}
	A matrix $A \in \mathbb{R}^{m \times n}$ satisfies the rank-$p$ singular value gap condition if either $\operatorname{rank}(A) < p$ or $\sigma_p(A) > \sigma_{p+1}(A)$, where $\sigma_i(A)$ denotes the $i$-th largest singular value of $A$.
\end{definition}

\begin{remark}[Interpretation of the singular value gap]\label{rem:svd_gap}
	When $\operatorname{rank}(A) < p$, the matrix $A$ already lies in the feasible set $\{Z \mid \operatorname{rank}(Z) \le p\}$, and the best rank-at-most-$p$ approximation is $A$ itself, which is trivially unique. When $\operatorname{rank}(A) \ge p$ and $\sigma_p(A) > \sigma_{p+1}(A)$, there is a strict gap between the $p$-th and $(p+1)$-st singular values, which guarantees that the leading $p$-dimensional left and right singular subspaces are uniquely determined. In either case, the rank-$p$ truncated SVD operator $\mathcal{T}_p$ defined in Theorem~\ref{thm:Z_solution} is continuous at $A$ and the best rank-at-most-$p$ approximation is unique. If $\sigma_p(A) = \sigma_{p+1}(A)$ with both positive, the $p$-th singular subspace is not uniquely defined, and the truncated SVD depends discontinuously on $A$.

Like the non-degeneracy condition, the singular value gap holds generically. To see this, consider the singular value decomposition $A = U\Sigma V^{\mathrm{T}}$, where $U \in O(m)$, $V \in O(n)$, and $\Sigma$ is diagonal with entries $\sigma_1 \ge \cdots \ge \sigma_{\min(m,n)} \ge 0$. The space of $m \times n$ matrices has dimension $mn$. When all singular values are distinct, each matrix $A$ is determined by its $\min(m,n)$ singular values together with the orthogonal matrices $U$ and $V$, and the parametrization is locally one-to-one up to discrete sign flips of singular vector pairs. Now suppose $\sigma_p = \sigma_{p+1}$. This equality imposes one constraint on the singular values, reducing the number of free singular value parameters by one. At the same time, the corresponding singular vector pairs $u_p, v_p$ and $u_{p+1}, v_{p+1}$ can be simultaneously rotated by any angle $\theta$. Replacing $u_p$ by $\cos\theta \, u_p + \sin\theta \, u_{p+1}$ and $u_{p+1}$ by $-\sin\theta \, u_p + \cos\theta \, u_{p+1}$, with the same rotation applied to $v_p$ and $v_{p+1}$, leaves the product $U\Sigma V^{\mathrm{T}}$ unchanged. This rotational freedom means that the preimage of each such matrix under the SVD parametrization is one-dimensional rather than zero-dimensional, effectively removing a further degree of freedom from the image. The combined effect of the singular value constraint and the rotational degeneracy is that the set of matrices with $\sigma_p = \sigma_{p+1}$ has codimension at least two in $\mathbb{R}^{m \times n}$. A subset of codimension at least two locally resembles a lower-dimensional surface in the ambient space, and any such surface has $mn$-dimensional Lebesgue measure zero. For random matrices whose entries are drawn from a continuous distribution, the probability of landing on this measure-zero set is zero, so the gap condition $\sigma_p > \sigma_{p+1}$ holds with probability one whenever $p < \min(m,n)$.
\end{remark}

\begin{theorem}[Block-coordinate optimality at accumulation points]\label{thm:accumulation_optimality}
	Let $\{(S^{(k)}, Z^{(k)})\}$ be the sequence generated by Algorithm~\ref{alg:l0altmin}, and let $(\widetilde{S}, \widetilde{Z})$ be an accumulation point. Assume that $D - \widetilde{Z}$ is non-degenerate with respect to $\lambda$ and $\xi$ as in Definition~\ref{def:nondegeneracy}, and that $D - \widetilde{S}$ satisfies the rank-$p$ singular value gap condition as in Definition~\ref{def:svd_gap}. Then the following hold.
	
	\begin{enumerate}
		\item[\textup{(i)}] $\widetilde{S}$ is the global minimizer of the $S$-subproblem at $\widetilde{Z}$, that is,
		\[
		\widetilde{S} = \mathcal{H}_{\lambda,\xi}(D - \widetilde{Z}),
		\]
		where $\mathcal{H}_{\lambda,\xi}$ is the entrywise hard-thresholding operator with box clipping defined by~\eqref{eq:S_update}.
		
		\item[\textup{(ii)}] $\widetilde{Z}$ is the global minimizer of the $Z$-subproblem at $\widetilde{S}$, that is,
		\[
		\widetilde{Z} = \mathop{\mathrm{argmin}}_{\operatorname{rank}(Z) \le p} \frac{1}{2}\|D - \widetilde{S} - Z\|_F^2.
		\]
		In particular, $\widetilde{Z}$ is the rank-$p$ truncated SVD of $D - \widetilde{S}$ as given by~\eqref{eq:Z_update}.
		
		\item[\textup{(iii)}] If $\operatorname{rank}(\widetilde{Z}) = p$, the Riemannian gradient vanishes.
		\begin{equation}\label{eq:Z_stationarity}
			P_{T_{\widetilde{Z}} \mathcal{M}_p}(\widetilde{Z} + \widetilde{S} - D) = \boldsymbol{O},
		\end{equation}
		where $\mathcal{M}_p := \{Z \in \mathbb{R}^{m \times n} \mid \operatorname{rank}(Z) = p\}$ is the rank-$p$ manifold, $T_{\widetilde{Z}}\mathcal{M}_p$ is the tangent space of $\mathcal{M}_p$ at $\widetilde{Z}$, and $P_{T_{\widetilde{Z}} \mathcal{M}_p}$ denotes the orthogonal projection onto $T_{\widetilde{Z}}\mathcal{M}_p$. If $\operatorname{rank}(\widetilde{Z}) < p$, then $\widetilde{Z} = D - \widetilde{S}$, meaning the residual vanishes.
	\end{enumerate}
\end{theorem}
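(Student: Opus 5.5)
The plan is to pass to the limit along a convergent subsequence, using the two regularity conditions to get continuity of the block operators $\mathcal{H}_{\lambda,\xi}$ and $\mathcal{T}_p$ at the relevant limit matrices. As stated at the start of Section~\ref{sec:convergence}, I work with the tail of the iteration, where $\lambda$ is fixed at $\lambda_{\min}$, and write $f=f_\lambda$. Let $(S^{(k_j)},Z^{(k_j)})\to(\widetilde S,\widetilde Z)$ along a subsequence. By Theorem~\ref{thm:monotone}, $f(S^{(k)},Z^{(k)})$ is non-increasing, and it is bounded below by $0$. Hence it converges to some $f^\ast$. The sandwich in Theorem~\ref{thm:monotone} then also gives $f(S^{(k+1)},Z^{(k)})\to f^\ast$.

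\emph{Part (ii).} By Theorem~\ref{thm:Z_solution}, $Z^{(k_j)}=\mathcal{T}_p(D-S^{(k_j)})$. The gap condition at $D-\widetilde S$ makes $\mathcal{T}_p$ continuous there (Remark~\ref{rem:svd_gap}). Letting $j\to\infty$ gives $\widetilde Z=\mathcal{T}_p(D-\widetilde S)$. By Theorem~\ref{thm:Z_solution} this minimizer is unique, so (ii) follows.

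\emph{Part (i).} This is the main obstacle. The natural identity is $S^{(k_j)}=\mathcal{H}_{\lambda,\xi}(D-Z^{(k_j-1)})$, but nothing guarantees $Z^{(k_j-1)}\to\widetilde Z$. I therefore look forward instead, at $S^{(k_j+1)}=\mathcal{H}_{\lambda,\xi}(D-Z^{(k_j)})$.

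Set $S':=\mathcal{H}_{\lambda,\xi}(D-\widetilde Z)$. Non-degeneracy of $D-\widetilde Z$ means every entry lies a positive distance from its decision threshold. Hence, for large $j$, the support of $S^{(k_j+1)}$ equals that of $S'$, and $S^{(k_j+1)}\to S'$ entrywise because each branch formula is continuous. In particular $\|S^{(k_j+1)}\|_0=\|S'\|_0$ eventually, so $f(S^{(k_j+1)},Z^{(k_j)})\to f(S',\widetilde Z)$, and this limit must equal $f^\ast$.

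On the other side, $\|\cdot\|_0$ is lower semicontinuous, so $f$ is lower semicontinuous and
\[
f(\widetilde S,\widetilde Z)\le\liminf_j f(S^{(k_j)},Z^{(k_j)})=f^\ast=f(S',\widetilde Z).
\]
The point $\widetilde S$ is feasible, since the box constraint is closed. Because $S'$ is the global minimizer of the $S$-subproblem at $\widetilde Z$ (Theorem~\ref{thm:S_solution}), the reverse inequality also holds. So $\widetilde S$ is also a global minimizer. Non-degeneracy makes each scalar problem have a unique minimizer (Remark~\ref{rem:nondegeneracy}), hence $\widetilde S=S'$, which is (i).

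\emph{Part (iii).} Write $A=D-\widetilde S=\sum_i\sigma_iu_iv_i^{\mathrm T}$. By (ii), $\widetilde Z=\sum_{i\le p}\sigma_iu_iv_i^{\mathrm T}$ with $U=[u_1,\dots,u_p]$ and $V=[v_1,\dots,v_p]$. The residual is $E:=\widetilde Z+\widetilde S-D=-\sum_{i>p}\sigma_iu_iv_i^{\mathrm T}$, and by orthogonality of singular vectors $U^{\mathrm T}E=\boldsymbol{O}$ and $EV=\boldsymbol{O}$. Plugging into the standard formula
\[
P_{T_{\widetilde Z}\mathcal{M}_p}(E)=UU^{\mathrm T}E+EVV^{\mathrm T}-UU^{\mathrm T}EVV^{\mathrm T}
\]
gives \eqref{eq:Z_stationarity}.

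If instead $\operatorname{rank}(\widetilde Z)<p$, then necessarily $\operatorname{rank}(A)<p$. Otherwise $\sigma_p(A)>0$ and the truncation $\mathcal{T}_p(A)$ would have rank exactly $p$. So $A$ itself is feasible, $\widetilde Z=\mathcal{T}_p(A)=A=D-\widetilde S$, and the residual vanishes.
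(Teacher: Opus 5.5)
Your proof is correct. Part (i) is essentially the paper's argument. Both pass to the forward iterate $S^{(k_j+1)}=\mathcal{H}_{\lambda,\xi}(D-Z^{(k_j)})$, use non-degeneracy to freeze its support, and use the sandwich from Theorem~\ref{thm:monotone} to get $f(\mathcal{H}_{\lambda,\xi}(D-\widetilde Z),\widetilde Z)=f^\ast$. Both then bound $f(\widetilde S,\widetilde Z)\le f^\ast$ by lower semicontinuity and conclude by uniqueness of the $S$-minimizer.

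The routes differ in (ii) and (iii). For (ii), the paper again works with the shifted index. It takes $S^{(k_j+1)}\to\widetilde S$ from (i) to get $Z^{(k_j+1)}\to\mathcal{T}_p(D-\widetilde S)=:\widetilde Z_\ast$. It then shows $f(\widetilde S,\widetilde Z_\ast)=f^\ast=f(\widetilde S,\widetilde Z)$ and uses uniqueness of the $Z$-minimizer to conclude $\widetilde Z_\ast=\widetilde Z$. You instead use that the $Z$-update takes the same-index sparse iterate: $Z^{(k)}=\mathcal{T}_p(D-S^{(k)})$ for all $k\ge 0$, including $k=0$ since $S^{(0)}=\boldsymbol{O}$. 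One application of continuity of $\mathcal{T}_p$ at $D-\widetilde S$ along the given subsequence then finishes (ii). This makes (ii) independent of (i), of non-degeneracy, and of any objective-value bookkeeping; it needs only the gap condition. The paper's version keeps (i) and (ii) in one uniform ``forward iterate plus value comparison'' template. It also records that the shifted pair $(S^{(k_j+1)},Z^{(k_j+1)})$ has the same limit, though this follows at once from your (i), (ii) and the two continuity properties.

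For (iii), the paper appeals to the general first-order condition on $\mathcal{M}_p$: a minimizer over the rank-$\le p$ set at a rank-$p$ point is critical on the manifold. You verify \eqref{eq:Z_stationarity} directly from the SVD via $U^{\mathrm T}E=\boldsymbol{O}$ and $EV=\boldsymbol{O}$. Your computation is self-contained and places the residual explicitly in the normal space. The paper's is shorter and does not use the explicit form of the minimizer. The rank-deficient case is handled the same way in both.
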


\begin{proof}
	We defer the proof to Appendix~\ref{app:proofs}.
\end{proof}

\subsection{Local optimality}

We now show that under the non-degeneracy and singular value gap conditions, accumulation points are local minimizers of Problem~(\ref{P1}). The key structure is that the $\ell_0$ penalty creates a finite gap when the support of $S$ changes, which dominates small perturbations.

\begin{theorem}[Local optimality]\label{thm:local_optimal}
	Let $(\widetilde{S}, \widetilde{Z})$ be an accumulation point of Algorithm~\ref{alg:l0altmin} satisfying the conditions of Theorem~\ref{thm:accumulation_optimality}, let $D - \widetilde{Z}$ be non-degenerate in the sense of Definition~\ref{def:nondegeneracy}, and assume the box-inactivity condition
	\begin{equation}\label{eq:box_inactivity}
	\|\widetilde{S}\|_\infty < \xi.
	\end{equation}
	The following assertions then hold.
	\begin{enumerate}
		\item[\textup{(i)}] $(\widetilde{S}, \widetilde{Z})$ is a strict local minimizer of~\eqref{P1} with respect to every perturbation $(S, Z)$ with $\operatorname{supp}(S) \neq \operatorname{supp}(\widetilde{S})$.
		\item[\textup{(ii)}] If $\operatorname{rank}(\widetilde{Z}) < p$, then $(\widetilde{S}, \widetilde{Z})$ is a local minimizer of~\eqref{P1}.
		\item[\textup{(iii)}] If $\operatorname{rank}(\widetilde{Z}) = p$ and $\operatorname{Hess}_{\mathcal{M}_p} g(\widetilde{Z}) \succ 0$, then $(\widetilde{S}, \widetilde{Z})$ is a strict local minimizer of~\eqref{P1}.
	\end{enumerate}
\end{theorem}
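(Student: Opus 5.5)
The plan is to localize everything in a small neighborhood of $(\widetilde S,\widetilde Z)$ and separate perturbations by support. Put $\widetilde R := D-\widetilde Z$ and $\widetilde{\Omega} := \operatorname{supp}(\widetilde S)$. Theorem~\ref{thm:accumulation_optimality}(i) and non-degeneracy give $\widetilde S=\mathcal H_{\lambda,\xi}(\widetilde R)$, with every entry strictly on one side of the threshold. By box-inactivity \eqref{eq:box_inactivity}, for $(i,j)\in\widetilde\Omega$ we have $\widetilde s_{ij}=\widetilde r_{ij}$, $|\widetilde r_{ij}|<\xi$ and $\widetilde r_{ij}^2>2\lambda$. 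Off $\widetilde\Omega$ the zero branch is strictly preferred, i.e.\ $J_0<J_1$ with a positive margin. Let $\delta>0$ be the minimum over all entries of $|J_0-J_1|$ at $\widetilde R$, together with $\min_{\widetilde\Omega}|\widetilde s_{ij}|$ and $\xi-\|\widetilde S\|_\infty$.

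\textbf{Step 1 (support changes, part (i)).} Take $(S,Z)$ feasible with $\|S-\widetilde S\|_F+\|Z-\widetilde Z\|_F<\varepsilon$. For $\varepsilon<\min_{\widetilde\Omega}|\widetilde s_{ij}|$ no entry of $\widetilde\Omega$ can vanish, so $\operatorname{supp}(S)\supseteq\widetilde\Omega$. A support change therefore means some entry $(i,j)\notin\widetilde\Omega$ has $s_{ij}\ne0$ with $|s_{ij}|<\varepsilon$. That entry contributes
\[
\lambda+\tfrac12(r_{ij}-s_{ij})^2-\tfrac12\widetilde r_{ij}^2\ \ge\ \lambda-O(\varepsilon),
\]
where $r=D-Z$ and $|r_{ij}-\widetilde r_{ij}|<\varepsilon$. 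On the entries of $\widetilde\Omega$ and the unchanged zero entries, the smooth part changes by at most $O(\varepsilon)$ in total. The bound is $C\varepsilon(\|D-\widetilde Z-\widetilde S\|_F+\varepsilon)$, by expanding $\tfrac12\|D-Z-S\|_F^2$ around $(\widetilde S,\widetilde Z)$. Hence $f(S,Z)-f(\widetilde S,\widetilde Z)\ge\lambda-O(\varepsilon)>0$ for small $\varepsilon$. This finite $\ell_0$ jump dominating Lipschitz changes is the mechanism the section announces.

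\textbf{Step 2 (fixed support).} It remains to treat $\operatorname{supp}(S)=\widetilde\Omega$. The penalty is then constant, and box-inactivity means the constraint $|s_{ij}|\le\xi$ is locally inactive. The problem reduces to minimizing the smooth function $\tfrac12\|D-Z-S\|_F^2$ over $S$ supported on $\widetilde\Omega$ (an open set near $\widetilde S$) and $\operatorname{rank}Z\le p$. Minimizing first in $S$ gives $S_{\widetilde\Omega}=(D-Z)_{\widetilde\Omega}$, so the reduced function is $g(Z):=\tfrac12\|P_{\widetilde\Omega^c}(D-Z)\|_F^2$, presumably the $g$ in (iii). It suffices to show that $\widetilde Z$ is a local minimizer of $g$ on $\{\operatorname{rank}\le p\}$, and a strict one in case (iii). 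Strictness in $S$ then follows, since the partial minimizer in $S$ is unique.

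\textbf{Step 3, part (ii).} If $\operatorname{rank}\widetilde Z<p$, Theorem~\ref{thm:accumulation_optimality}(iii) gives $\widetilde Z+\widetilde S=D$, so $f(\widetilde S,\widetilde Z)=\lambda|\widetilde\Omega|$. Any nearby pair with the same support has $f\ge\lambda|\widetilde\Omega|$, and Step~1 handles the others. So we obtain a local (not necessarily strict) minimizer.

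\textbf{Step 3, part (iii).} If $\operatorname{rank}\widetilde Z=p$, then rank is lower semicontinuous, so every nearby $Z$ with $\operatorname{rank}Z\le p$ has rank exactly $p$ and lies on the smooth manifold $\mathcal M_p$. First-order stationarity of $g$ on $\mathcal M_p$ comes from \eqref{eq:Z_stationarity} combined with $(\widetilde Z+\widetilde S-D)_{\widetilde\Omega}=0$ on the box-inactive support. The latter holds because $\widetilde S_{\widetilde\Omega}=\widetilde R_{\widetilde\Omega}$ exactly, so $\nabla g(\widetilde Z)=\widetilde Z+\widetilde S-D$. The standard second-order sufficient condition on a Riemannian manifold (vanishing Riemannian gradient plus positive definite Riemannian Hessian) then gives a strict local minimum of $g$ on $\mathcal M_p$.

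The main obstacle is Step~1: the smooth-part change must be controlled uniformly while $Z$ ranges over a nonconvex set. Here the constant can be taken as the Lipschitz constant of the quadratic on a bounded ball, so it is routine. The subtler point is matching $\operatorname{Hess}_{\mathcal M_p}g$ to the reduced function $g$ after eliminating $S$, which I would make explicit as above.
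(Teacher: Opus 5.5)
Your proposal is correct. For (ii) and (iii) it follows the paper, with $\mathcal I:=\widetilde\Omega$:
\begin{itemize}
\item You reduce to $g(Z)=\tfrac12\|P_{\mathcal I^c}(D-Z)\|_F^2+\lambda|\mathcal I|$.
\item In the rank-deficient case you use $\widetilde Z+\widetilde S=D$.
\item When $\operatorname{rank}\widetilde Z=p$, you note that nearby feasible points lie on $\mathcal M_p$ before applying the Riemannian second-order sufficient condition. The paper quantifies this with the Eckart--Young radius $\sigma_p(\widetilde Z)$; you use lower semicontinuity of rank.
\end{itemize}
You also make explicit the uniqueness of the partial minimizer in $S$ needed for strictness in (iii), which the paper leaves implicit.

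The genuine difference is part (i). You use that the perturbation is small in $S$ as well. Near $\widetilde S$ the support can only grow, and each new entry costs $\lambda$. The quadratic term decreases by at most $\|D-\widetilde Z-\widetilde S\|_F\,\varepsilon$, so the margin is $\lambda-O(\varepsilon)$. This needs neither non-degeneracy nor the block optimality of Theorem~\ref{thm:accumulation_optimality}. In fact it shows that (i) holds at every feasible point, as a consequence of the lower semicontinuity of $\|\cdot\|_0$ alone. (Your $\delta$ includes the branch margin $|J_0-J_1|$, but Step~1 never uses it.)

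The paper localizes only in $Z$. Non-degeneracy gives a uniform branch gap, so $f(S,Z)\ge g(Z)+2\delta$ for every feasible $S$ with a different support. The $Z$-block optimality of $\widetilde Z$ gives $g(Z)\ge g(\widetilde Z)-\tfrac12\|P_{\mathcal I}(Z-\widetilde Z)\|_F^2$, hence a margin $2\delta-\tfrac12\varepsilon^2$. That estimate never uses closeness of $S$ to $\widetilde S$, so it proves more than (i) as stated. For every $Z$ near $\widetilde Z$, every feasible $S$ with a different support, however far from $\widetilde S$, is strictly worse. It also yields the local support stability $\mathcal H_{\lambda,\xi}(D-Z)=P_{\mathcal I}(D-Z)$ that is reused in the proof of Theorem~\ref{thm:linear_conv}.

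Your route is shorter and more general for the statement as written. The paper's buys robustness that is global in $S$ and local only in $Z$, at the price of the extra hypotheses.
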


\begin{proof}
	The proof is given in Appendix~\ref{app:proofs}.
\end{proof}

\begin{remark}[Validity under the continuation strategy]\label{rem:continuation}
	The convergence analysis above is stated for a fixed penalty parameter $\lambda$. In practice, Algorithm~\ref{alg:l0altmin} employs a continuation strategy that gradually decreases $\lambda$ over iterations. We verify that this does not compromise the convergence guarantees in two steps. First, the continuation phase is finite, since the penalty decreases geometrically as $\lambda^{(k)} = \rho^{k-k_{\mathrm{warm}}} \lambda_0$ for $k > k_{\mathrm{warm}}$ with $\rho \in (0,1)$, and once $\lambda^{(k)} \le \lambda_{\min}$ it is clamped to $\lambda_{\min}$. Since $\rho^{k-k_{\mathrm{warm}}} \lambda_0 \le \lambda_{\min}$ for all sufficiently large $k$, there exists an iteration index $\kappa$ such that $\lambda^{(k)} = \lambda_{\min}$ for all $k \ge \kappa$. Second, for iterations $k \ge \kappa$, the algorithm performs exact block coordinate descent on the fixed objective $f_{\lambda_{\min}}$, so the monotone decrease, boundedness, and accumulation point results of Theorems~\ref{thm:monotone}--\ref{thm:accumulation_optimality} apply to the tail subsequence $\{(S^{(k)}, Z^{(k)})\}_{k \ge \kappa}$. During the continuation phase $k < \kappa$, each iteration still performs an exact block coordinate descent step for the current $\lambda^{(k)}$, ensuring that the iterates remain bounded and the objective values are non-increasing with respect to the current penalty level. The continuation serves to guide the iterates toward a better basin of attraction, analogous to homotopy methods in $\ell_1$ minimization, and does not alter the per-iteration monotonicity or the accumulation point properties of the fixed-$\lambda$ phase. We further note that, with the loop indexing of Algorithm~\ref{alg:l0altmin}, the counter $k$ is incremented at the beginning of each iteration, so the continuation test $k > k_{\mathrm{warm}}$ is carried out after exactly $k_{\mathrm{warm}}$ completed iterations, in accordance with the warmup convention adopted in the experiments.
\end{remark}

\subsection{Local linear convergence}

While Theorem~\ref{thm:accumulation_optimality} characterizes the block-optimality of accumulation points and Theorem~\ref{thm:local_optimal} their local minimality, these results do not provide a quantitative convergence rate. We now establish that, under an additional contraction condition on the Fr\'echet derivative of the truncated SVD operator, the iterates converge locally at a linear rate. As shown in Theorem~\ref{thm:hessian} below, this same condition also implies the second-order sufficient condition of Theorem~\ref{thm:local_optimal}. This demonstrates that the $\ell_0$ formulation, despite its non-smoothness, admits a convergence rate comparable to $\ell_1$-based methods under suitable structural assumptions.

For convenience, we first introduce the following definition.
\begin{definition}[Fr\'echet derivative]\label{def:frechet}
	We recall that the Fr\'echet derivative of a map $F \colon \mathbb{R}^{m \times n} \to \mathbb{R}^{m \times n}$ at a point $A$ is the unique linear operator $DF(A) \colon \mathbb{R}^{m \times n} \to \mathbb{R}^{m \times n}$ satisfying
	\[
	F(A + H) = F(A) + DF(A)[H] + o(\|H\|_F) \quad \text{as } \|H\|_F \to 0.
	\]
	When $F$ is twice continuously differentiable, the linearization $F(A + H) = F(A) + DF(A)[H] + O(\|H\|_F^2)$ holds, where the remainder is bounded by $\ell\,\|H\|_F^2$ with $\ell$ being the Lipschitz constant of $DF$ in a neighborhood of $A$.
\end{definition}
\begin{theorem}[Local linear convergence]\label{thm:linear_conv}
	Let $(\widetilde{S}, \widetilde{Z})$ be an accumulation point of Algorithm~\ref{alg:l0altmin} satisfying the non-degeneracy condition of Definition~\ref{def:nondegeneracy}, the singular value gap condition of Definition~\ref{def:svd_gap}, and the strict box-inactivity condition~\eqref{eq:box_inactivity}. Assume $\operatorname{rank}(\widetilde{Z}) = p$ and the following local contraction condition holds.
	\begin{equation}\label{eq:contraction}
		q := \|D\mathcal{T}_p(\widetilde{A}) \circ P_{\mathcal{I}}\|_{\mathrm{op}} < 1,
	\end{equation}
	where $\widetilde{A} := D - \widetilde{S}$, $D\mathcal{T}_p(\widetilde{A})$ is the Fr\'echet derivative of the rank-$p$ truncated SVD operator at $\widetilde{A}$, and $P_{\mathcal{I}}$ is the entrywise projection onto the support $\mathcal{I} := \operatorname{supp}(\widetilde{S})$. Then for any $\varrho \in (q, 1)$, there exists an iteration index $\kappa_0$ and a constant $\varphi > 0$ such that
	\begin{equation}\label{eq:linear_rate}
		\|Z^{(k)} - \widetilde{Z}\|_F \le \varphi \, \varrho^{k - \kappa_0}, \qquad \forall\, k \ge \kappa_0.
	\end{equation}
\end{theorem}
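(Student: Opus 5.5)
The plan is to show that, near the accumulation point, the alternating scheme reduces to a fixed-point iteration $Z \mapsto G(Z) := \mathcal{T}_p(D - \mathcal{H}_{\lambda,\xi}(D - Z))$. We then show that on a neighborhood of $\widetilde{Z}$ the map $G$ is a contraction with factor at most $\varrho$, and that $\widetilde{Z}$ is a fixed point of $G$. By Theorem~\ref{thm:accumulation_optimality}(i)--(ii), $\widetilde{S} = \mathcal{H}_{\lambda,\xi}(D-\widetilde{Z})$ and $\widetilde{Z} = \mathcal{T}_p(D-\widetilde{S})$, so $G(\widetilde{Z}) = \widetilde{Z}$. Throughout, $\lambda$ is the tail value $\lambda_{\min}$ (Remark~\ref{rem:continuation}), so for $k \ge \kappa$ we have $Z^{(k)} = G(Z^{(k-1)})$ exactly.

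\emph{Step 1 (local structure of $\mathcal{H}_{\lambda,\xi}$).} By non-degeneracy of $D-\widetilde{Z}$, every entry $(D-\widetilde{Z})_{ij}$ lies a positive distance $\delta_1$ from the finitely many threshold values $\pm\sqrt{2\lambda}$ and $\pm(\xi/2+\lambda/\xi)$. By box-inactivity, $\|\widetilde{S}\|_\infty<\xi$, so every retained entry satisfies $|(D-\widetilde{Z})_{ij}| < \xi$ strictly. Choose $\delta>0$ smaller than $\delta_1$ and smaller than $\xi - \|\widetilde{S}\|_\infty$. Then for $\|Z - \widetilde{Z}\|_F < \delta$ (which controls each entry), the support of $\mathcal{H}_{\lambda,\xi}(D-Z)$ is exactly $\mathcal{I}$, and on $\mathcal{I}$ no clipping occurs. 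Hence
\[
\mathcal{H}_{\lambda,\xi}(D-Z) = P_{\mathcal{I}}(D-Z),
\]
which is affine in $Z$. Consequently $D - \mathcal{H}_{\lambda,\xi}(D-Z) = \widetilde{A} + P_{\mathcal{I}}(Z - \widetilde{Z})$, using $\widetilde{S} = P_{\mathcal{I}}(D-\widetilde{Z})$.

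\emph{Step 2 (differentiability of $\mathcal{T}_p$).} We have $\operatorname{rank}(\widetilde{A}) \ge \operatorname{rank}(\widetilde{Z}) = p$, so the gap condition gives $\sigma_p(\widetilde{A}) > \sigma_{p+1}(\widetilde{A})$. Under this gap, $\mathcal{T}_p(A) = P_U A P_V$, where $P_U$ and $P_V$ are the spectral projectors onto the leading singular subspaces. These projectors are real-analytic in $A$ near $\widetilde{A}$, via Riesz projections of $AA^{\mathrm T}$ and $A^{\mathrm T}A$ on a contour separating $\sigma_p^2$ from $\sigma_{p+1}^2$. Hence $\mathcal{T}_p$ is $C^2$ on a neighborhood of $\widetilde{A}$, and by Definition~\ref{def:frechet}
\[
\|\mathcal{T}_p(\widetilde{A}+H) - \widetilde{Z} - D\mathcal{T}_p(\widetilde{A})[H]\|_F \le \ell\|H\|_F^2 .
\]

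\emph{Step 3 (contraction estimate).} Set $H = P_{\mathcal{I}}(Z-\widetilde{Z})$, so that $\|H\|_F \le \|Z-\widetilde{Z}\|_F$. Combining Steps 1 and 2,
\[
\|G(Z)-\widetilde{Z}\|_F \le q\|Z-\widetilde{Z}\|_F + \ell\|Z-\widetilde{Z}\|_F^2 .
\]
Shrink $\delta$ so that, in addition, $\delta \le (\varrho - q)/\ell$. Then for $\|Z-\widetilde{Z}\|_F<\delta$ we get $\|G(Z)-\widetilde{Z}\|_F \le \varrho\|Z-\widetilde{Z}\|_F < \delta$, so the $\delta$-ball is invariant under $G$.

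\emph{Step 4 (entering the basin).} Since $(\widetilde{S},\widetilde{Z})$ is an accumulation point, some index $\kappa_0 \ge \kappa$ satisfies $\|Z^{(\kappa_0)}-\widetilde{Z}\|_F<\delta$. By invariance and induction,
\[
\|Z^{(k)}-\widetilde{Z}\|_F \le \varrho^{k-\kappa_0}\|Z^{(\kappa_0)}-\widetilde{Z}\|_F \quad \text{for all } k \ge \kappa_0,
\]
which is \eqref{eq:linear_rate} with $\varphi=\delta$. (The $S$-iterates then satisfy $S^{(k+1)} - \widetilde{S} = -P_{\mathcal{I}}(Z^{(k)}-\widetilde{Z})$, so they also converge linearly.)

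The main obstacle is Step 2: rigorously justifying that $\mathcal{T}_p$ is $C^2$ (with a local Lipschitz derivative) near $\widetilde{A}$, so that the quadratic remainder bound is legitimate. I would rely on the Riesz-projection representation, which requires the gap $\sigma_p>\sigma_{p+1}$ and remains valid under perturbations of size less than half the gap, by Weyl's inequality. A secondary subtlety is making sure the neighborhood radius $\delta$ respects all three constraints at once: the non-degeneracy margin, the box margin, and the Taylor/gap radius.
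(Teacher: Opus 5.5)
Your proof is correct and is essentially the paper's: you make the same local reduction to the map $Z\mapsto\mathcal{T}_p(P_{\mathcal{I}^c}D+P_{\mathcal{I}}Z)$ with fixed point $\widetilde{Z}$, use the same ball invariance, and enter the basin through the accumulation point, and you differ only in bounding the linearization error by a $C^2$ quadratic remainder (justified via Riesz projections) where the paper uses continuity of $DF$ and the mean-value inequality on a convex ball. One small correction to Step 1: non-degeneracy keeps the entries of $D-\widetilde{Z}$ away only from the threshold that actually solves the boundary equation ($\sqrt{2\lambda}$ when $\sqrt{2\lambda}\le\xi$, and $\xi/2+\lambda/\xi$ otherwise), not from both candidate values, but that single threshold is all your argument needs.
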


\begin{proof}
	We defer the proof to Appendix~\ref{app:proofs}.
\end{proof}

\begin{remark}[Interpretation of the contraction condition]\label{rem:contraction}
	The contraction condition $q := \|D\mathcal{T}_p(\widetilde{A}) \circ P_{\mathcal{I}}\|_{\mathrm{op}} < 1$ requires that the Fr\'echet derivative of the truncated SVD, when restricted to perturbations supported on $\mathcal{I}$, is contractive. This generalizes the tangent-space incoherence condition, since $D\mathcal{T}_p(\widetilde{A})$ captures not only the zeroth-order tangent projection $P_{T_{\widetilde{Z}}\mathcal{M}_p}$ but also first-order curvature corrections of $\mathcal{T}_p$ at $\widetilde{A}$. When the residual $\widetilde{A} - \widetilde{Z}$ is zero, which occurs when $\operatorname{rank}(\widetilde{Z}) < p$, $D\mathcal{T}_p(\widetilde{A})$ reduces to $P_{T_{\widetilde{Z}}\mathcal{M}_p}$ and $q$ coincides with $\|P_{T_{\widetilde{Z}}\mathcal{M}_p} \circ P_{\mathcal{I}}\|_{\mathrm{op}}$. In general, when $\widetilde{Z} = \widetilde{U}\widetilde{\Sigma}\widetilde{V}^{\mathrm{T}}$ and $\widetilde{A} = \widetilde{U}\widetilde{\Sigma}\widetilde{V}^{\mathrm{T}} + \widetilde{R}$ with $\widetilde{R} := P_{\mathcal{I}^c}\widetilde{A}$, the Fr\'echet derivative $D\mathcal{T}_p(\widetilde{A})$ admits an explicit matrix representation in terms of $\widetilde{U}$, $\widetilde{V}$, $\widetilde{\Sigma}$, and $\widetilde{R}$, as detailed in Theorem~\ref{thm:incoherence_generic}. For random support patterns and singular vectors with bounded coherence, this condition holds with $q$ bounded away from $1$. If $\operatorname{rank}(\widetilde{Z}) < p$, then $\widetilde{Z} = D - \widetilde{S}$ and the residual vanishes identically, so the iterates converge to a point in the interior of the feasible set where the rank constraint is inactive and standard strong-convexity arguments apply.
\end{remark}

The same rate transfers to the sparse iterates and to the objective values, as we record next.
\begin{corollary}[Linear convergence of the sparse component]\label{cor:S_conv}
	Under the conditions of Theorem~\ref{thm:linear_conv}, the sparse iterates satisfy
	\begin{equation}\label{eq:S_linear_rate}
		\|S^{(k)} - \widetilde{S}\|_F \le \varphi \,\varrho^{k - \kappa_0}, \qquad \forall\, k \ge \kappa_0,
	\end{equation}
	with the same rate $\varrho$ and the constant $\varphi$ of~\eqref{eq:linear_rate}, enlarged if necessary. Moreover, the objective values converge at the same linear rate.
	\[
	\bigl|f\bigl(S^{(k)}, Z^{(k)}\bigr) - f\bigl(\widetilde{S}, \widetilde{Z}\bigr)\bigr| \le \varphi' \,\varrho^{k - \kappa_0},
	\]
	for some constant $\varphi' > 0$.
\end{corollary}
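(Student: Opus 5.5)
The plan is to exploit the fact that, near the limit, the hard-thresholding operator $\mathcal{H}_{\lambda,\xi}$ is locally affine. Non-degeneracy of $D-\widetilde{Z}$ and box-inactivity both enter here.

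\textbf{Step 1: locally affine structure of $S^{(k)}$.} By Theorem~\ref{thm:linear_conv}, $Z^{(k)}\to\widetilde{Z}$, and by Theorem~\ref{thm:accumulation_optimality}(i) we have $\widetilde{S}=\mathcal{H}_{\lambda,\xi}(D-\widetilde{Z})$. Non-degeneracy (Remark~\ref{rem:nondegeneracy}) places every entry of $D-\widetilde{Z}$ a positive distance $\delta$ from the thresholds $\pm\sqrt{2\lambda}$ and $\pm(\xi/2+\lambda/\xi)$. Box-inactivity $\|\widetilde{S}\|_\infty<\xi$ gives, on $\mathcal{I}$, $|(D-\widetilde{Z})_{ij}|<\xi$, again with a positive margin. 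Hence there is $\varepsilon>0$ such that $\|Z-\widetilde{Z}\|_F<\varepsilon$ implies two things: $\operatorname{supp}\mathcal{H}_{\lambda,\xi}(D-Z)=\mathcal{I}$, and no clipping occurs on $\mathcal{I}$. In that regime
\[
\mathcal{H}_{\lambda,\xi}(D-Z)=P_{\mathcal{I}}(D-Z).
\]
Enlarging $\kappa_0$ if needed (the bound \eqref{eq:linear_rate} already forces $\|Z^{(k-1)}-\widetilde{Z}\|_F<\varepsilon$ for large $k$), we obtain for all such $k$
\[
S^{(k)}-\widetilde{S}=P_{\mathcal{I}}\bigl(\widetilde{Z}-Z^{(k-1)}\bigr),\qquad\text{so}\qquad \|S^{(k)}-\widetilde{S}\|_F\le\|Z^{(k-1)}-\widetilde{Z}\|_F\le\varphi\varrho^{k-1-\kappa_0}.
\]
Absorbing the factor $\varrho^{-1}$ into $\varphi$, after increasing $\kappa_0$ by one so the shift is legitimate, yields \eqref{eq:S_linear_rate}.

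\textbf{Step 2: objective values.} For $k$ in this regime, $\|S^{(k)}\|_0=\|\widetilde{S}\|_0=|\mathcal{I}|$, so the $\ell_0$ terms cancel exactly. This exact cancellation is the essential point: without support stability the difference would jump by multiples of $\lambda$. What remains is the difference of the quadratic terms. Write $E_k:=D-Z^{(k)}-S^{(k)}$ and $\widetilde{E}:=D-\widetilde{Z}-\widetilde{S}$. Then
\[
\bigl|\tfrac12\|E_k\|_F^2-\tfrac12\|\widetilde{E}\|_F^2\bigr|\le\tfrac12\|E_k-\widetilde{E}\|_F\bigl(\|E_k\|_F+\|\widetilde{E}\|_F\bigr).
\]
Here $\|E_k-\widetilde{E}\|_F\le\|Z^{(k)}-\widetilde{Z}\|_F+\|S^{(k)}-\widetilde{S}\|_F\le2\varphi\varrho^{k-\kappa_0}$. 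The factor $\|E_k\|_F+\|\widetilde{E}\|_F$ is uniformly bounded by Theorem~\ref{thm:boundedness}, or directly by $2\|\widetilde{E}\|_F+1$ for large $k$. This gives $\varphi'$.

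\textbf{Main obstacle.} The main care is in Step 1: making the neighborhood argument quantitative. One must choose $\varepsilon$ below both the non-degeneracy margin and the box margin $\xi-\|\widetilde{S}\|_\infty$. Entrywise perturbations are controlled by $\|Z-\widetilde{Z}\|_\infty\le\|Z-\widetilde{Z}\|_F$. On $\mathcal{I}^c$, one must also check that an entry with $|r|>\xi$ stays on the zero branch, which follows from the same margin to $\xi/2+\lambda/\xi$. The index bookkeeping between $Z^{(k-1)}$ and $S^{(k)}$ is routine.
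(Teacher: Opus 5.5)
Your proposal is correct and follows essentially the same route as the paper. Support stability plus box inactivity give $S^{(k)}-\widetilde{S}=P_{\mathcal{I}}(\widetilde{Z}-Z^{(k-1)})$, the index shift is absorbed into $\varphi$ as a factor $\varrho^{-1}$, and the objective gap is bounded through $|a^2-b^2|=(a+b)|a-b|$ with bounded residuals; you also make explicit the cancellation of the $\ell_0$ terms, which the paper uses tacitly. The one cosmetic difference is that you enlarge $\kappa_0$, whereas the statement keeps the $\kappa_0$ of Theorem~\ref{thm:linear_conv}. As the paper does for $k=\kappa_0$, the finitely many skipped indices are covered by replacing $\varphi$ (and $\varphi'$) with a finite maximum, e.g.\ $\max\{\varphi\varrho^{-1},\|S^{(\kappa_0)}-\widetilde{S}\|_F\}$.
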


\begin{proof}
	The proof can be found in Appendix~\ref{app:proofs}.
\end{proof}

\begin{theorem}[Genericity of the contraction condition]\label{thm:incoherence_generic}
	Suppose $n\geqslant m$, the support $\mathcal{I}$ is drawn uniformly at random from all subsets of $\{1, \ldots, m\} \times \{1, \ldots, n\}$ of size $s := \lfloor \alpha mn \rfloor$ with $\alpha \in (0, 1)$, and the singular vectors $\widetilde{U}$, $\widetilde{V}$ of $\widetilde{Z}$ have bounded coherence, meaning $\max_{i} \|\widetilde{u}_i\|_2 \le \sqrt{\mu_0 p/m}$ and $\max_{j} \|\widetilde{v}_j\|_2 \le \sqrt{\mu_0 p/n}$ for some constant $\mu_0 \ge 1$. Then the contraction condition~\eqref{eq:contraction} holds with
	\begin{equation}\label{eq:prop_bound}
		q \le C_0\sqrt{\alpha\,\mu_0\,p} + C_1\,\frac{\|\widetilde{R}\|_F}{\sigma_p(\widetilde{A})}
	\end{equation}
	for universal constants $C_0, C_1 > 0$, where $\widetilde{R} := \widetilde{A} - \widetilde{Z} = P_{\mathcal{I}^c}(D - \widetilde{Z})$ is the residual. In particular, if $\alpha\,\mu_0\,p < 1/(2C_0^2)$ and $\|\widetilde{R}\|_F$ is sufficiently small relative to $\sigma_p(\widetilde{A})$, then $q < 1$ and the iterates of Algorithm~\ref{alg:l0altmin} converge linearly by Theorem~\ref{thm:linear_conv}.
\end{theorem}

\begin{proof}
	The proof is given in Appendix~\ref{app:proofs}.
\end{proof}

\begin{remark}[Interpretation of the bound~\eqref{eq:prop_bound}]\label{rem:prop_interpret}
	The first term $C_0\sqrt{\alpha\mu_0 p}$ is dimension-free, as it depends only on the sparsity level $\alpha$, the coherence $\mu_0$, and the rank $p$, but not on the matrix dimensions $m$ or $n$ individually. This is because fixed-cardinality sampling with $|\mathcal{I}| = s = \alpha mn$ produces a per-entry inclusion probability $\alpha$ that is already dimension-normalized, and the hypergeometric concentration of the sampling operator yields a dimension-free bound. The comparison principle of \cite{GrossNesme2010} ensures that matrix concentration inequalities for sampling without replacement are controlled by their with-replacement counterparts. The second term $C_1\|\widetilde{R}\|_F / \sigma_p(\widetilde{A})$ accounts for the first-order correction to $D\mathcal{T}_p(\widetilde{A})$ beyond the tangent projection $P_{T_{\widetilde{Z}}\mathcal{M}_p}$. It vanishes when $\operatorname{rank}(\widetilde{Z}) < p$, in which case $\widetilde{R} = 0$, and is small when the residual is small relative to the singular value gap.
\end{remark}

\begin{theorem}[Second-order sufficient condition]\label{thm:hessian}
	Let $(\widetilde{S}, \widetilde{Z})$ be an accumulation point of Algorithm~\ref{alg:l0altmin} with $\operatorname{rank}(\widetilde{Z}) = p$ that satisfies the assumptions of Theorem~\ref{thm:local_optimal}, and let $g$ and $q$ denote the reduced objective of Theorem~\ref{thm:local_optimal} and the contraction constant~\eqref{eq:contraction}, respectively. Then
	\begin{enumerate}
		\item[\textup{(i)}] $\operatorname{Hess}_{\mathcal{M}_p} g(\widetilde{Z}) = \operatorname{Hess}_0 - \mathcal{C}$ for some self-adjoint operators $\operatorname{Hess}_0 \succ 0$ and $\mathcal{C} \succeq 0$ on $T_{\widetilde{Z}}\mathcal{M}_p$.
		\item[\textup{(ii)}] If $q < 1$, then $\operatorname{Hess}_{\mathcal{M}_p} g(\widetilde{Z}) \succ 0$. Consequently, $(\widetilde{S}, \widetilde{Z})$ is a strict local minimizer of~\eqref{P1}.
	\end{enumerate}
\end{theorem}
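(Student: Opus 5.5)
We first pin down the reduced objective $g$. Non-degeneracy of $D-\widetilde Z$ and box inactivity~\eqref{eq:box_inactivity} make the support $\mathcal I$ locally stable. For $Z$ near $\widetilde Z$, the minimization over $S$ is then achieved by $S=P_{\mathcal I}(D-Z)$, which is unclipped because $|\widetilde s_{ij}|<\xi$. This gives
\[
g(Z)=\tfrac12\|P_{\mathcal I^c}(D-Z)\|_F^2+\lambda|\mathcal I|,
\]
which we restrict to $\mathcal M_p$. The Euclidean gradient is $\nabla g(Z)=-P_{\mathcal I^c}(D-Z)$ and the Euclidean Hessian is $P_{\mathcal I^c}$. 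At $\widetilde Z$ we have $\nabla g(\widetilde Z)=-\widetilde R$, where $\widetilde R=P_{\mathcal I^c}(D-\widetilde Z)=\widetilde A-\widetilde Z$. By Theorem~\ref{thm:accumulation_optimality}(iii), $\widetilde R$ is normal to $T_{\widetilde Z}\mathcal M_p$, that is, $\widetilde U^{\mathrm T}\widetilde R=0$ and $\widetilde R\widetilde V=0$.

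For (i), we use the standard formula for the Riemannian Hessian on the fixed-rank manifold with $\widetilde Z=\widetilde U\widetilde\Sigma\widetilde V^{\mathrm T}$ and a tangent vector $\xi_T=\widetilde U M\widetilde V^{\mathrm T}+U_p\widetilde V^{\mathrm T}+\widetilde U V_p^{\mathrm T}$, where $U_p\perp\widetilde U$ and $V_p\perp\widetilde V$. The formula reads
\[
\operatorname{Hess}g(\widetilde Z)[\xi_T]=P_T\bigl(P_{\mathcal I^c}\xi_T\bigr)+\text{(curvature term)},
\]
where the curvature term is the Weingarten map applied to the normal gradient $-\widetilde R$. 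Explicitly it is $-\widetilde R V_p\widetilde\Sigma^{-1}\widetilde V^{\mathrm T}-\widetilde U\widetilde\Sigma^{-1}U_p^{\mathrm T}\widetilde R$, with the sign determined by convention. We set $\operatorname{Hess}_0:=P_TP_{\mathcal I^c}P_T=\mathrm{Id}_T-P_TP_{\mathcal I}P_T$. Since $\|P_{\mathcal I}P_T\|_{\mathrm{op}}\le q$, we get $\operatorname{Hess}_0\succeq(1-q^2)\mathrm{Id}\succ0$. That step already uses $q<1$; without it we would only argue $\operatorname{Hess}_0\succeq0$ and would need injectivity of $P_{\mathcal I^c}$ on $T$. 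We then define $\mathcal C$ as minus the curvature term.

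The main obstacle is showing $\mathcal C\succeq0$. The naive Weingarten term is indefinite in $(U_p,V_p)$, since the quadratic form is $2\langle \widetilde R, U_p\widetilde\Sigma^{-1}V_p^{\mathrm T}\rangle$ up to sign. Our plan is to exploit that $\widetilde Z$ is the truncated SVD of $\widetilde A$ with a gap, so that $\|\widetilde R\|_{\mathrm{op}}=\sigma_{p+1}(\widetilde A)<\sigma_p(\widetilde A)=\sigma_{\min}(\widetilde\Sigma)$. We would then write the cross term against the diagonal pieces: $\mathcal C$ is taken as the part of the curvature built from $\|U_p\|^2$ and $\|V_p\|^2$, weighted by $\widetilde R\widetilde\Sigma^{-1}$, with the indefinite remainder absorbed into $\operatorname{Hess}_0$ via a completed-square decomposition. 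The gap makes $\mathcal C$ positive semidefinite, and $\mathcal C$ is self-adjoint because it is a Hessian piece. If this decomposition does not close cleanly, we would define $\mathcal C:=\operatorname{Hess}_0-\operatorname{Hess}g$ directly and check positive semidefiniteness by bounding the cross term with AM--GM.

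For (ii), we relate $\mathcal C$ to $q$ through the explicit form of $D\mathcal T_p(\widetilde A)$. Differentiating the truncated SVD gives
\[
D\mathcal T_p(\widetilde A)=P_T+\mathcal K,
\]
where $\mathcal K$ collects exactly the $\widetilde R\widetilde\Sigma^{-1}$-type couplings appearing in $\mathcal C$. The fixed-point linearization $Z\mapsto\mathcal T_p(D-P_{\mathcal I}(D-Z))$ has derivative $D\mathcal T_p(\widetilde A)\circ P_{\mathcal I}$ on $T$. We identify $\operatorname{Hess} g$, after a congruence by $D\mathcal T_p$, with $\mathrm{Id}-D\mathcal T_p(\widetilde A)P_{\mathcal I}$ restricted to $T$. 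Then $\langle \operatorname{Hess}g[\eta],\eta\rangle\ge(1-q)\|\eta\|^2\cdot c$ for some $c>0$, which gives $\operatorname{Hess}_0-\mathcal C\succ0$. Strict local minimality then follows from Theorem~\ref{thm:local_optimal}(iii).
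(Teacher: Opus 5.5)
Your reduction to $g$, the gradient $-\widetilde R$ at $\widetilde Z$, and the form of the Riemannian Hessian as $P_TP_{\mathcal I^c}|_T$ plus a Weingarten term with quadratic form $\pm2\langle\widetilde R,U_p\widetilde\Sigma^{-1}V_p^{\mathrm T}\rangle$ are all correct. The split you build on them, however, does not work.

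\begin{itemize}
\item With $\operatorname{Hess}_0:=P_TP_{\mathcal I^c}P_T$ and $\mathcal C$ equal to minus the curvature term, $\mathcal C$ takes both signs whenever $\widetilde R\neq\boldsymbol O$, as you observe. With this $\operatorname{Hess}_0$, your fallback $\mathcal C:=\operatorname{Hess}_0-\operatorname{Hess}g$ is literally the same operator, so no AM--GM bound can rescue it.
\item The completed-square variant only shifts a multiple of $\|U_p\|_F^2+\|V_p\|_F^2$ between the two pieces. It still needs $P_{\mathcal I^c}$ to be injective on the block $\{\widetilde UM\widetilde V^{\mathrm T}\}$ of $T$, and it never touches $q$.
\item Your positivity claim for $\operatorname{Hess}_0$ rests on $\|P_{\mathcal I}P_T\|_{\mathrm{op}}=\|P_TP_{\mathcal I}\|_{\mathrm{op}}\le q$, which does not follow from~\eqref{eq:contraction}. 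Here $q$ is the norm of $D\mathcal T_p(\widetilde A)\circ P_{\mathcal I}$, which contains the curvature correction. It agrees with $\|P_TP_{\mathcal I}\|_{\mathrm{op}}$ when $\widetilde R=\boldsymbol O$, but not in general. Relying on it would also make (i) conditional on $q<1$, whereas (i) is unconditional.
\item In (ii) you never derive how $D\mathcal T_p(\widetilde A)$ relates to your $\operatorname{Hess}_0$ and $\mathcal C$. Also, ``a congruence by $D\mathcal T_p$'' is not an identity one can write down, since $\mathrm{Id}_T-D\mathcal T_p(\widetilde A)P_{\mathcal I}|_T$ is not self-adjoint.
\end{itemize}

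Note that (i) taken alone is satisfied by trivial splits such as $\mathcal C=c\,\mathrm{Id}_T$ with $c$ large. The whole point is to find a split that connects to $q$.

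The missing idea is to attach the curvature term to the identity rather than to $P_{\mathcal I}$. The Weingarten term depends only on the normal component of the Euclidean gradient. The function $h(Z):=\frac12\|\widetilde A-Z\|_F^2$ has the same gradient $-\widetilde R$ at $\widetilde Z$. In fact $g=h-\frac12\|P_{\mathcal I}(Z-\widetilde Z)\|_F^2+\lambda|\mathcal I|$ holds exactly, because $\widetilde A-Z=P_{\mathcal I^c}(D-Z)+P_{\mathcal I}(\widetilde Z-Z)$. This gives $\operatorname{Hess}g=\operatorname{Hess}_0-\mathcal C$ with $\operatorname{Hess}_0:=\operatorname{Hess}_{\mathcal M_p}h(\widetilde Z)=\mathrm{Id}_T+(\text{curvature term})$ and $\mathcal C:=P_TP_{\mathcal I}|_T\succeq0$.

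Your gap estimate belongs in $\operatorname{Hess}_0$. The bound $|2\langle\widetilde R,U_p\widetilde\Sigma^{-1}V_p^{\mathrm T}\rangle|\le(\sigma_{p+1}(\widetilde A)/\sigma_p(\widetilde A))(\|U_p\|_F^2+\|V_p\|_F^2)$ yields $\operatorname{Hess}_0\succeq(1-\sigma_{p+1}(\widetilde A)/\sigma_p(\widetilde A))\,\mathrm{Id}_T\succ0$ with no condition on $q$. That is (i).

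For (ii), differentiate the optimality condition $P_{T_{Z(t)}}(Z(t)-A(t))=\boldsymbol O$ along $A(t)=\widetilde A+tH$, $Z(t)=\mathcal T_p(A(t))$. This gives $D\mathcal T_p(\widetilde A)=\operatorname{Hess}_0^{-1}P_T$, so $\operatorname{Hess}_0^{-1}\mathcal C=D\mathcal T_p(\widetilde A)P_{\mathcal I}|_T$ has norm at most $q$. The self-adjoint operator $\mathcal K:=\operatorname{Hess}_0^{-1/2}\mathcal C\operatorname{Hess}_0^{-1/2}\succeq0$ is similar to it, so $\lambda_{\max}(\mathcal K)\le q<1$. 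The correct congruence is by $\operatorname{Hess}_0^{1/2}$: $\operatorname{Hess}g=\operatorname{Hess}_0^{1/2}(\mathrm{Id}_T-\mathcal K)\operatorname{Hess}_0^{1/2}\succeq(1-q)(1-\sigma_{p+1}(\widetilde A)/\sigma_p(\widetilde A))\,\mathrm{Id}_T$. This is the inequality you were after, and strict local minimality then follows from Theorem~\ref{thm:local_optimal}(iii) as you say.
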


\begin{proof}
	The proof can be found in Appendix~\ref{app:proofs}.
\end{proof}

\begin{corollary}[Existence and genericity of the second-order condition]\label{cor:hessian_generic}
	
	Let $(\widetilde{S}, \widetilde{Z})$ be an accumulation point of Algorithm~\ref{alg:l0altmin} satisfying the assumptions of Theorem~\ref{thm:local_optimal} with $\operatorname{rank}(\widetilde{Z}) = p$, and let $q$ be defined by~\eqref{eq:contraction}. Then
	\begin{enumerate}
		\item[\textup{(i)}] $\operatorname{Hess}_{\mathcal{M}_p} g(\widetilde{Z}) \succ 0$ whenever $q < 1$.
		\item[\textup{(ii)}] Suppose $\mathcal{I}$ is uniform over the subsets of $\{1,\ldots,m\} \times \{1,\ldots,n\}$ of size $s = \lfloor \alpha mn \rfloor$, and $\max_i \|\widetilde{u}_i\|_2 \le \sqrt{\mu_0 p/m}$, $\max_j \|\widetilde{v}_j\|_2 \le \sqrt{\mu_0 p/n}$. If $\alpha\mu_0 p < 1/(2C_0^2)$ and $\|\widetilde{R}\|_F/\sigma_p(\widetilde{A})$ is sufficiently small, then
		\[
		\mathbb{P}\{q < 1\} \ge 1 - 2e^{-c\alpha\min\{m,n\}} > 0,
		\]
		so the second-order condition invoked in Theorem~\ref{thm:local_optimal} is non-vacuous, and $(\widetilde{S}, \widetilde{Z})$ is, with the same probability, a strict local minimizer of~\eqref{P1}.
	\end{enumerate}
\end{corollary}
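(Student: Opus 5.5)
Part (i) follows directly from Theorem~\ref{thm:hessian}(ii). An accumulation point with $\operatorname{rank}(\widetilde{Z}) = p$ that satisfies the assumptions of Theorem~\ref{thm:local_optimal} is exactly the setting of Theorem~\ref{thm:hessian}. That theorem gives the decomposition $\operatorname{Hess}_{\mathcal{M}_p} g(\widetilde{Z}) = \operatorname{Hess}_0 - \mathcal{C}$, and it shows that $q<1$ forces this operator to be positive definite. I would simply quote it. If an independent check is wanted, I would recall why $q<1$ suffices: the negative part $\mathcal{C}$ comes from eliminating the sparse block restricted to the fixed support $\mathcal{I}$. Its size relative to $\operatorname{Hess}_0$ is controlled by $\|D\mathcal{T}_p(\widetilde{A})\circ P_{\mathcal{I}}\|_{\mathrm{op}}^2 \le q^2<1$, which gives $\operatorname{Hess}_{\mathcal{M}_p} g(\widetilde{Z}) \succeq (1-q^2)\operatorname{Hess}_0 \succ 0$ in the appropriate metric. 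For the write-up, however, citing Theorem~\ref{thm:hessian} is enough.

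For part (ii), I would apply Theorem~\ref{thm:incoherence_generic} with $n \ge m$; the case $m>n$ follows by transposition, since all quantities are symmetric under $Z\mapsto Z^{\mathrm{T}}$. The bound~\eqref{eq:prop_bound} splits $q$ into a random term and a deterministic term, $q \le C_0\sqrt{\alpha\mu_0 p} + C_1\|\widetilde{R}\|_F/\sigma_p(\widetilde{A})$. To make the probability explicit, I would reopen the proof of that theorem. The random term arises from the operator norm $\|P_{T}P_{\mathcal{I}}P_{T}\|_{\mathrm{op}}$ with $T = T_{\widetilde{Z}}\mathcal{M}_p$. It is bounded through a matrix Bernstein inequality for a sum of rank-one operators $P_T(e_ie_j^{\mathrm{T}})\otimes P_T(e_ie_j^{\mathrm{T}})$ over sampled entries, and the coherence bound gives $\|P_T(e_ie_j^{\mathrm{T}})\|_F^2\le 2\mu_0 p/m$. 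By the Gross--Nesme comparison, sampling without replacement is dominated by sampling with replacement. Bernstein then gives $\|P_TP_{\mathcal{I}}P_T\|_{\mathrm{op}} \le 2\alpha\mu_0 p\cdot(\text{const})$ with failure probability at most $2\dim(T)\exp(-c\,\alpha m/(\mu_0 p))$. Absorbing $\mu_0p$ and the dimensional prefactor into $c$ in the regime $\alpha\mu_0 p<1/(2C_0^2)$ yields the stated $1-2e^{-c\alpha\min\{m,n\}}$.

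On that event, $C_0\sqrt{\alpha\mu_0 p} < 1/\sqrt2$. Choosing $\|\widetilde{R}\|_F/\sigma_p(\widetilde{A}) < (1-1/\sqrt2)/C_1$ then gives $q<1$. Part (i) then gives $\operatorname{Hess}_{\mathcal{M}_p} g(\widetilde{Z})\succ0$, and Theorem~\ref{thm:local_optimal}(iii) gives strict local minimality on the same event. Positivity of the probability bound is immediate because $2e^{-c\alpha\min\{m,n\}}<1$ once $\alpha\min\{m,n\} > (\ln 2)/c$. This is the only requirement, so the condition is non-vacuous.

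The main obstacle is the probability constant. The dimensional prefactor $\dim T = p(m+n-p)$ from matrix Bernstein does not cancel automatically. Absorbing it into $e^{-c\alpha\min\{m,n\}}$ requires $\alpha\min\{m,n\}\gtrsim \log(mn)$, or else a scalar net argument on $T$. A secondary subtlety is that $\widetilde{Z}$ and $\mathcal{I}$ are not independent for an algorithmic accumulation point. I would state explicitly that, as in Theorem~\ref{thm:incoherence_generic}, the tangent space is treated as fixed, so that the randomness is over $\mathcal{I}$ only.
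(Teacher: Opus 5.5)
Your skeleton matches the paper's. Part (i) is read off from Theorem~\ref{thm:hessian}, and part (ii) combines the bound of Theorem~\ref{thm:incoherence_generic} with (i) and Theorem~\ref{thm:local_optimal}(iii).

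The real difference is that you reopen the probability estimate, whereas the paper simply imports $1-2e^{-c\alpha\min\{m,n\}}$ from the proof of Theorem~\ref{thm:incoherence_generic}. That proof does not use matrix Bernstein on $P_TP_{\mathcal{I}}P_T$. It splits $q$ into $\tau_1+\tau_2+\tau_3$ plus the residual term, and bounds $\tau_1$ and $\tau_2$ by Cauchy--Schwarz against the column and row counts $n_j$, with $\tau_3\le\tau_2$. The only randomness enters through a scalar hypergeometric tail $n_j\le 2\alpha m$ and a union bound.

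Your Bernstein route gives the same $\sqrt{\alpha\mu_0 p}$ scaling. As written, however, it does not yield the stated bound with a universal $c$. As you note, the prefactor $2p(m+n-p)$ cannot be absorbed without a condition like $\alpha\min\{m,n\}\gtrsim\log(mn)$. The $\mu_0 p$ in your exponent is a different matter: it disappears if you target a deviation of order $\alpha\mu_0 p$ via matrix Chernoff. For the corollary itself, the clean fix is to drop the reconstruction and cite the bound, as the paper does.

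Your obstacle is nonetheless genuine and not specific to your route. In the paper, a union bound over $n$ columns, each failing with probability of order $e^{-c\alpha m}$, also leaves a factor $n$. The proof also carries the proviso $\alpha\ge\varphi_0\sqrt{\log(mn)/m}$, which appears in neither statement. Likewise, the paper tacitly assumes away your point that $\mathcal{I}$ and $\widetilde{Z}$ are not independent at an algorithmic limit. So the logarithmic condition is a caveat the corollary inherits from Theorem~\ref{thm:incoherence_generic}, not something your proof could be expected to remove.

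There are three smaller differences.
\begin{itemize}
\item \emph{Non-vacuity.} The paper checks that the hypotheses can be met: random singular vectors are incoherent with high probability, and $\xi>\max_{i,j}|s^*_{ij}|$ gives box inactivity. You skip this and check positivity of the bound instead. Your remark that the asserted $>0$ needs $\alpha\min\{m,n\}>(\ln 2)/c$ is correct and is left implicit in the statement.
\item \emph{Transposition.} Your transposition argument for $m>n$ closes a small hole, since Theorem~\ref{thm:incoherence_generic} assumes $n\ge m$.
\item \emph{The aside on (i).} The bound $\operatorname{Hess}_{\mathcal{M}_p} g(\widetilde{Z})\succeq(1-q^2)\operatorname{Hess}_0$ holds only if $\widetilde{R}=\boldsymbol{O}$, or if $q$ is measured into the $\operatorname{Hess}_0$-metric. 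In general $\lambda_{\max}(\mathcal{K})=\|\operatorname{Hess}_0^{-1/2}P_TP_{\mathcal{I}}\|_{\mathrm{op}}^2\le\|\operatorname{Hess}_0\|_{\mathrm{op}}\,q^2$, which can exceed $q^2$. The paper instead uses $\lambda_{\max}(\mathcal{K})=r(\operatorname{Hess}_0^{-1}\mathcal{C})\le q$, which yields $(1-q)\operatorname{Hess}_0$. Since you do not rely on the aside, this is harmless.
\end{itemize}
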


\begin{proof}
	The proof is given in Appendix~\ref{app:proofs}.
\end{proof}

\begin{remark}[Summary of the convergence analysis]\label{rem:summary}
	In summary, the analysis of this section revolves around a single structural condition, the contraction inequality $q < 1$ of~\eqref{eq:contraction}. It yields the local linear convergence of the iterates asserted in Theorem~\ref{thm:linear_conv} and, through the second-order condition established in Theorem~\ref{thm:hessian}, the strict local optimality of the limit point, which is precisely the statement of Theorem~\ref{thm:local_optimal}.
\end{remark}

\section{Numerical experiments}\label{sec:experiments}

We evaluate the proposed $\ell_0$-AltMin algorithm on synthetic data, real images, and surveillance video. Because the central benefit of the $\ell_0$ formulation is the fidelity of the recovered sparse component, we assess performance not only through the relative errors $\mathrm{err}(Z)$ and $\mathrm{err}(S)$ but also through the support error rate $\mathrm{SE}$, which gauges how precisely the nonzero pattern of $S^*$ is identified.

\subsection{Baseline methods and metrics}

Four baseline algorithms are included for comparison. The IALM algorithm \cite{Lin2010} targets the convex surrogate $\min \|Z\|_* + \lambda\|S\|_1$ subject to $D = Z + S$, iterating between singular value thresholding for the nuclear-norm subproblem and soft thresholding for the $\ell_1$ subproblem. The AltMin algorithm \cite{Shen2019} tackles the rank-constrained, $\ell_1$-penalized problem $\min \|D - Z - S\|_F^2 + \mu\|S\|_1$ subject to $\operatorname{rank}(Z) \le p$, alternating between a soft-thresholding update of $S$ and a truncated-SVD update of $Z$. The LMaFit algorithm \cite{Shen2014} adopts the factorization $Z = UV^{\mathrm{T}}$, cycling through least-squares updates of $U$ and $V$ and a soft-thresholding step for $S$. The ncRPCA algorithm of Netrapalli et al. \cite{Netrapalli2014} alternates between a truncated-SVD projection onto the rank-$p$ set and a hard-thresholding projection onto the sparse set, its threshold being adapted from the singular values of the residual rather than following an explicit continuation schedule on a penalty parameter. The critical distinction is that three of the four baselines, namely IALM, AltMin, and LMaFit, rely on soft thresholding for the sparse update, whereas ncRPCA and $\ell_0$-AltMin use hard thresholding. Unlike $\ell_0$-AltMin, however, ncRPCA does not clip sparse entries and does not employ our continuation schedule on $\lambda$, making the comparison informative with respect to the benefits of continuation and the box constraint.

We do not include a direct experimental comparison with the $\ell_0$-based methods of Tao et al.~\cite{Tao2022} and Li et al.~\cite{Li2026} for three reasons. First, Tao et al.\ study column-wise $\ell_{2,0}$-norm regularization, which promotes whole-column sparsity rather than entrywise sparsity, making it inapplicable to the RPCA setting where corruptions occur at arbitrary individual positions. Second, Li et al.\ employ matrix factorization with a proximal gradient algorithm, a fundamentally different formulation from our direct rank constraint with alternating minimization, and no public code is available for their method. Third, and more fundamentally, the existing $\ell_0$-norm literature, including the penalty-decomposition framework of Lu and Zhang \cite{Lu2012}, is stated for vector variables, namely sparse logistic regression, sparse inverse covariance selection, and compressed sensing, and contains no matrix decomposition problem. Because the matrix $\ell_0$-norm $\|S\|_0 = \#\{(i,j):S_{ij}\ne 0\}$ over a matrix variable is NP-hard in general, no direct $\ell_0$-based baseline for the matrix RPCA problem exists in the current literature. The four baselines above share the same alternating minimization template, providing a controlled comparison that isolates the effect of the thresholding mechanism, the continuation strategy, and the box constraint.

Two relative error metrics are reported, namely $\mathrm{err}(Z) = \|Z - Z^*\|_F / \|Z^*\|_F$ for the low-rank component and $\mathrm{err}(S) = \|S - S^*\|_F / \|S^*\|_F$ for the sparse component. For soft-thresholding methods, entries of $\widehat{S}$ with magnitude below a threshold $\varepsilon$ are treated as zero when determining the support, with $\varepsilon = 10^{-8}$ on synthetic data and $\varepsilon = 10^{-6}$ on real images. Letting $\Omega^* = \operatorname{supp}(S^*)$ and $\widehat{\Omega} = \operatorname{supp}(\widehat{S})$, the support error rate is defined as
\begin{equation}\label{eq:supp_err}
\mathrm{SE} = \frac{|\widehat{\Omega} \setminus \Omega^*| + |\Omega^* \setminus \widehat{\Omega}|}{|\Omega^*|},
\end{equation}
where the numerator counts false positives $\widehat{\Omega} \setminus \Omega^*$ and false negatives $\Omega^* \setminus \widehat{\Omega}$. Here $\mathrm{SE} = 0$ means the estimated support matches the true support exactly. We note that the SE comparison between hard- and soft-thresholding methods is not fully symmetric, because soft-thresholding produces many small but nonzero entries that require a post-processing threshold $\varepsilon$ to determine the support, while hard thresholding yields an exact zero pattern by construction. The choice of $\varepsilon$ directly affects the SE of soft-thresholding methods. The error metrics $\mathrm{err}(Z)$ and $\mathrm{err}(S)$, which are based on Frobenius norms and do not depend on $\varepsilon$, provide a complementary and parameter-free comparison.

Unless otherwise stated, all methods use the same stopping tolerance $\mathrm{tol} = 10^{-6}$ on the relative residual $\|D - Z - S\|_F / \|D\|_F$, so that the comparison is made at one common accuracy level. For $\ell_0$-AltMin, the initial penalty is set to $\lambda_0 = 30 / \sqrt{m}$, which places the hard threshold $\sqrt{2\lambda_0}$ safely above the noise floor of the low-rank residual. The continuation strategy decreases $\lambda$ by the factor $\rho = 0.8$ whenever the relative residual fails to decrease by at least a factor $0.9$, starting after a warmup of $k_{\mathrm{warm}} = 10$ iterations and down to the lower bound $\lambda_{\min} = 10^{-8}\lambda_0$. The amplitude bound is $\xi = 10^{10}$, effectively removing the box constraint for these unconstrained-amplitude experiments. The box constraint becomes active and useful when prior bounds on corruption amplitudes are available. The sensitivity of the method to $\xi$ is studied separately in Table~\ref{tab:box} of Appendix~\ref{app:sensitivity}.

\subsection{Synthetic data}

We generate test matrices $D = Z^* + S^*$, where $Z^* = UV^{\mathrm{T}}$ with $U \in \mathbb{R}^{m \times r}$, $V \in \mathbb{R}^{n \times r}$ having i.i.d.\ standard Gaussian entries, and $S^*$ having exactly $\lfloor \alpha mn \rfloor$ nonzero entries. Here $\lfloor \alpha mn \rfloor$ denotes the floor function, which returns the greatest integer not exceeding $\alpha mn$. These nonzero entries are placed at uniformly random positions and take i.i.d.\ Gaussian values scaled to match $\|Z^*\|_F$. All synthetic experiments use a fixed random seed for reproducibility. We test $m = n \in \{500, 1000, 1500, 2000, 2500\}$, rank $r \in \{5, 15, 25, 35\}$, and sparsity $\alpha \in \{0.01, 0.05\}$. Table~\ref{tab:synthetic} reports six representative configurations with fixed rank $r = 35$, namely five with sparsity $\alpha = 0.05$ while varying the matrix dimension $m$ from $500$ to $2500$, followed by a lower-sparsity case at $m = 2500$ to demonstrate the extreme regime. The remaining $34$ configurations, spanning $r \in \{5, 15, 25\}$ at both sparsity levels together with the remaining $r = 35$, $\alpha = 0.01$ cases, are omitted to save space.

All five methods use the same stopping tolerance $\mathrm{tol} = 10^{-6}$ on the relative residual, so the comparison is made at one common accuracy level whenever that level is attained. The two hard-thresholding methods, namely $\ell_0$-AltMin and ncRPCA, are robust to dimension scaling, whereas the soft-thresholding methods AltMin and LMaFit break down completely beyond a critical dimension. At $m = 2000$ and $m = 2500$ with $\alpha = 0.05$, AltMin and LMaFit produce $\mathrm{err}(Z)$ between $7.6 \times 10^{-2}$ and $1.4 \times 10^{-1}$, more than five orders of magnitude above the $2.7 \times 10^{-7}$ of $\ell_0$-AltMin, together with support error rates of $1886\%$ to $1900\%$. At $m = 2500$ with $\alpha = 0.01$ the support error rate reaches $9898\%$, so that the number of false detections is almost one hundred times the true support size. The root cause is the thresholding mechanism. Hard thresholding zeros out entries below the threshold cleanly and yields a crisp support pattern, whereas soft thresholding merely attenuates every entry by $\mu$ without ever extinguishing it, so the residual tail inflates the support with false detections, an effect that intensifies as the dimension grows and the low-rank--sparse separation becomes increasingly ill-conditioned.

$\ell_0$-AltMin attains the smallest $\mathrm{err}(Z)$ in five of the six reported configurations, namely $(2.3\mbox{--}4.1) \times 10^{-7}$. The exception is $m = 1000$, where its $4.1 \times 10^{-7}$ lies within a factor of two of the $2.1 \times 10^{-7}$ attained by AltMin and LMaFit, with $\mathrm{SE} \le 0.015\%$, and it does so without being supplied any cardinality information. IALM reaches a comparable $\mathrm{err}(Z)$ of $(3.2\mbox{--}9.8) \times 10^{-7}$ but at up to about twice the wall-clock cost, for example $149.31$ versus $73.63$ seconds at $m = 2500$, consistent with the more expensive full SVD required by the nuclear-norm subproblem. ncRPCA attains the smallest $\mathrm{err}(S)$ in five of the six configurations, down to $1.1 \times 10^{-7}$ at $m = 2000$, and a support error rate of at most $0.002\%$ in every configuration, but it requires $2045$--$2050$ iterations against $89$--$106$ for $\ell_0$-AltMin, roughly twenty times as many, because its threshold adapts only once per stage and follows no continuation schedule. At $m = 2500$ this becomes $1209.86$ seconds against $73.63$, over an order of magnitude more wall-clock time. Regarding the support error rate, the SE of $\ell_0$-AltMin remains at most $0.015\%$ across all configurations, which amounts to at most about fifty misidentified entries at the largest size, namely $m = n = 2500$, and to about one at $m = n = 500$, so the estimated support is essentially exact at this noise level. The zero SE values reported by AltMin and LMaFit at certain matrix sizes are not a stable advantage, since the same methods exhibit SE up to $0.080\%$ at $m = 500$, ten times the $\ell_0$-AltMin value at the same size, indicating that their zero values reflect random fluctuation at the floor rather than a systematic gain in support identification. Moreover, $\ell_0$-AltMin achieves this comparable support recovery accuracy with substantially fewer iterations than ncRPCA and at a lower wall-clock cost than IALM at the largest scale. The comparison thus separates the features that distinguish $\ell_0$-AltMin from the other hard-thresholding method. The continuation schedule on $\lambda$ keeps the iteration count within a factor of about three of that of the soft-thresholding methods while retaining the accuracy of hard thresholding, an effect isolated in Section~\ref{sec:ablation}, whereas the box constraint supplies a prior amplitude bound at no cost to the exact solvability of the sparse subproblem, an effect isolated in Table~\ref{tab:box} of Appendix~\ref{app:sensitivity}.

\begin{table}[htbp]
\centering
\caption{Recovery results on synthetic data with $m = n$ and rank $r = 35$. Every method is stopped by the same criterion, namely the relative residual $\|D - Z - S\|_F / \|D\|_F \le \mathrm{tol}$ with $\mathrm{tol} = 10^{-6}$. For ncRPCA, which advances its rank stage every $60$ iterations, the iteration count is also governed by the stage schedule.}
\label{tab:synthetic}
\footnotesize
\setlength{\tabcolsep}{4pt}
\renewcommand{\arraystretch}{0.92}
\begin{tabular}{ccc|lccccc}
\toprule
$m=n$ & $r$ & $\alpha$ & Method & $\mathrm{err}(Z)$ & $\mathrm{err}(S)$ & SE & iter & CPU \\
\midrule
\multirow{5}{*}{500} & \multirow{5}{*}{35} & \multirow{5}{*}{0.05}
 & $\ell_0$-AltMin & 2.3e-7 & 5.7e-7 & 0.008\% & 102 & 3.08 \\
 & & & AltMin          & 5.0e-7 & 1.2e-6 & 0.080\% & 45 & 1.58 \\
 & & & IALM            & 9.8e-7 & 9.6e-7 & 0.016\% & 43 & 3.03 \\
 & & & LMaFit          & 5.0e-7 & 1.2e-6 & 0.080\% & 45 & 0.39 \\
 & & & ncRPCA          & 1.7e-6 & 9.7e-7 & 0.000\% & 2050 & 45.34 \\
\midrule
\multirow{5}{*}{1000} & \multirow{5}{*}{35} & \multirow{5}{*}{0.05}
 & $\ell_0$-AltMin & 4.1e-7 & 1.5e-6 & 0.010\% & 102 & 10.56 \\
 & & & AltMin          & 2.1e-7 & 7.3e-7 & 0.000\% & 35 & 3.97 \\
 & & & IALM            & 7.4e-7 & 5.7e-7 & 0.002\% & 38 & 21.85 \\
 & & & LMaFit          & 2.1e-7 & 7.3e-7 & 0.000\% & 35 & 1.06 \\
 & & & ncRPCA          & 3.7e-7 & 1.7e-7 & 0.002\% & 2049 & 244.10 \\
\midrule
\multirow{5}{*}{1500} & \multirow{5}{*}{35} & \multirow{5}{*}{0.05}
 & $\ell_0$-AltMin & 2.8e-7 & 1.3e-6 & 0.009\% & 106 & 27.80 \\
 & & & AltMin          & 3.4e-7 & 1.4e-6 & 0.000\% & 35 & 9.91 \\
 & & & IALM            & 5.8e-7 & 5.9e-7 & 0.005\% & 36 & 38.70 \\
 & & & LMaFit          & 3.4e-7 & 1.4e-6 & 0.000\% & 35 & 2.81 \\
 & & & ncRPCA          & 6.9e-7 & 2.9e-7 & 0.001\% & 2048 & 310.12 \\
\midrule
\multirow{5}{*}{2000} & \multirow{5}{*}{35} & \multirow{5}{*}{0.05}
 & $\ell_0$-AltMin & 2.7e-7 & 1.4e-6 & 0.014\% & 103 & 44.08 \\
 & & & AltMin          & 7.6e-2 & 7.6e-2 & 1889.7\% & 36 & 14.27 \\
 & & & IALM            & 6.5e-7 & 1.3e-6 & 0.011\% & 34 & 79.59 \\
 & & & LMaFit          & 7.6e-2 & 7.6e-2 & 1885.7\% & 36 & 4.57 \\
 & & & ncRPCA          & 2.9e-7 & 1.1e-7 & 0.001\% & 2048 & 540.66 \\
\midrule
\multirow{5}{*}{2500} & \multirow{5}{*}{35} & \multirow{5}{*}{0.05}
 & $\ell_0$-AltMin & 2.3e-7 & 1.3e-6 & 0.015\% & 104 & 73.63 \\
 & & & AltMin          & 1.4e-1 & 1.4e-1 & 1899.8\% & 34 & 21.83 \\
 & & & IALM            & 3.2e-7 & 8.7e-7 & 0.010\% & 35 & 149.31 \\
 & & & LMaFit          & 1.4e-1 & 1.4e-1 & 1899.8\% & 34 & 8.17 \\
 & & & ncRPCA          & 1.2e-6 & 4.5e-7 & 0.001\% & 2047 & 1209.86 \\
\midrule
\multirow{5}{*}{2500} & \multirow{5}{*}{35} & \multirow{5}{*}{0.01}
 & $\ell_0$-AltMin & 2.4e-7 & 1.4e-6 & 0.014\% & 89 & 50.19 \\
 & & & AltMin          & 1.2e-1 & 1.2e-1 & 9898.3\% & 34 & 20.58 \\
 & & & IALM            & 4.8e-7 & 1.3e-6 & 0.013\% & 23 & 97.85 \\
 & & & LMaFit          & 1.1e-1 & 1.1e-1 & 9898.3\% & 34 & 6.65 \\
 & & & ncRPCA          & 9.0e-7 & 2.3e-7 & 0.000\% & 2045 & 932.62 \\
\bottomrule
\end{tabular}
\\[2pt]
\justifying\footnotesize AltMin and LMaFit solve the same $\ell_1$-penalized problem and differ only in the low-rank update, truncated SVD versus the factorization $Z=UV^{\mathrm{T}}$, so their columns nearly coincide. Here $\mathrm{err}(Z) = \|Z - Z^*\|_F / \|Z^*\|_F$, $\mathrm{err}(S) = \|S - S^*\|_F / \|S^*\|_F$, SE is the support error rate \eqref{eq:supp_err}, and CPU is in seconds.
\end{table}

In terms of computational efficiency, $\ell_0$-AltMin requires $89$--$106$ iterations, about two to three times the $34$--$45$ of AltMin and LMaFit and the $23$--$43$ of IALM, and about twenty times fewer than ncRPCA at $2045$--$2050$. LMaFit is the fastest in wall-clock time on the instances where it converges ($m \le 1500$), owing to its lightweight factorization updates that avoid any SVD computation. At $m = 2500$ with $\alpha = 0.05$, $\ell_0$-AltMin is the fastest among the methods that reach the target accuracy, at $73.63$ seconds against $149.31$ for IALM and $1209.86$ for ncRPCA.

Figure~\ref{fig:conv_res} shows the convergence curves for the $2500 \times 2500$ configuration with $r = 35$ and $\alpha = 0.05$ in three panels, namely (a)~the relative residual, (b)~the low-rank recovery error $\mathrm{err}(Z)$, and (c)~the sparse recovery error $\mathrm{err}(S)$. The curves are taken from a single representative run and are intended to illustrate the qualitative behavior of convergence rather than the precise iteration counts, which are reported separately in Table~\ref{tab:synthetic}. In all three metrics, $\ell_0$-AltMin converges within $104$ iterations to a relative residual below $10^{-6}$, with $\mathrm{err}(Z) = 2.3 \times 10^{-7}$ and $\mathrm{err}(S) = 1.3 \times 10^{-6}$. A brief plateau is observed during the warmup phase of the continuation strategy, in which the penalty parameter $\lambda$ is held fixed at its initial value. Once $\lambda$ begins to decrease, the threshold $\sqrt{2\lambda}$ is lowered and additional sparse entries are detected, causing the residual to drop rapidly. ncRPCA reaches comparable final accuracy, with $\mathrm{err}(Z) = 1.2 \times 10^{-6}$ and $\mathrm{err}(S) = 4.5 \times 10^{-7}$, but requires $2047$ iterations, about twenty times as many as $\ell_0$-AltMin, because its threshold is adapted once per stage rather than following a systematic continuation schedule. IALM achieves comparable final accuracy in $35$ iterations. In contrast, AltMin and LMaFit stagnate at approximately $0.14$ in both $\mathrm{err}(Z)$ and $\mathrm{err}(S)$ due to the shrinkage bias of soft thresholding, confirming that the $\ell_0$ hard-thresholding formulation eliminates the magnitude bias inherent in $\ell_1$-based methods.

\begin{figure}[htbp]
\centering
\includegraphics[width=0.95\linewidth]{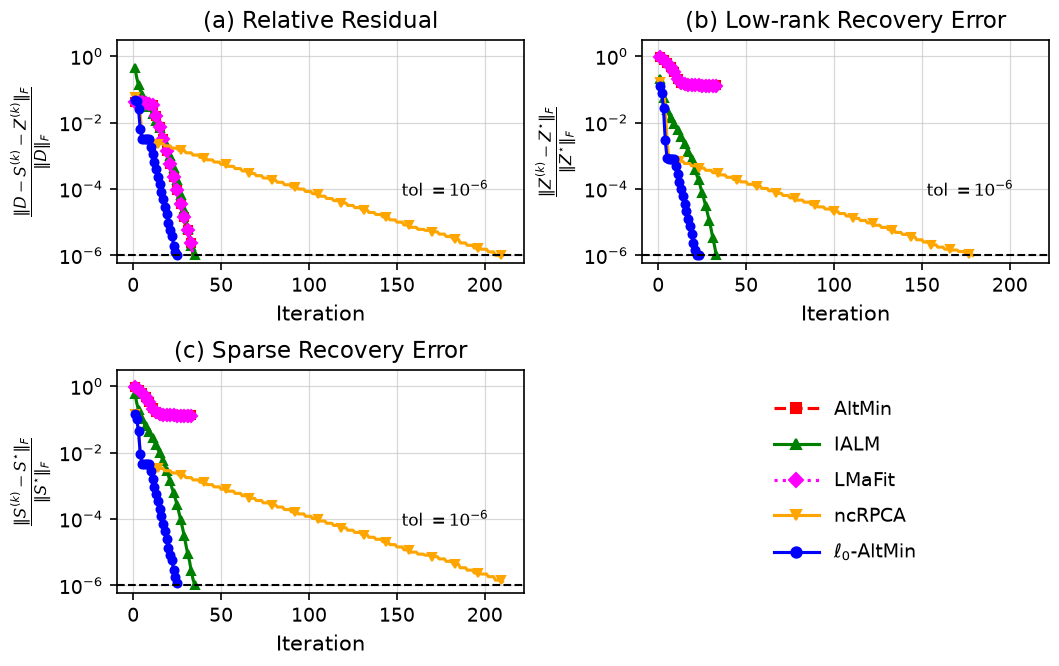}
\caption{Convergence on synthetic data with $m = n = 2500$, $r = 35$, and $\alpha = 0.05$, where (a) is the relative residual, (b) is $\mathrm{err}(Z)$, and (c) is $\mathrm{err}(S)$. The horizontal line is $\mathrm{tol} = 10^{-6}$, and curves above it stop at the iteration budget. AltMin and LMaFit stagnate near $0.14$ because of the shrinkage bias of soft thresholding.}
\label{fig:conv_res}
\end{figure}

\subsection{Ablation study}
\label{sec:ablation}

To validate the contribution of each component in $\ell_0$-AltMin, we perform an ablation study on synthetic data with $m = n = 1500$, $r = 25$, and $\alpha = 0.05$. We use the slightly looser tolerance $\mathrm{tol} = 10^{-5}$ instead of $10^{-6}$, so that a variant that does not converge is still stopped within a bounded iteration budget. The box constraint parameter is set to $\xi = 10^{10}$ (effectively inactive) so that the comparison focuses on the penalty type and the continuation strategy. The remaining parameters are those used in Section~\ref{sec:experiments}, namely $\rho = 0.8$, $k_{\mathrm{warm}} = 10$, and $\lambda_{\min} = 10^{-8}\lambda_0$, and each reported value is the average over three random seeds. The results are summarized in Table~\ref{tab:ablation}.

\paragraph{Continuation strategy}
The most critical component is the continuation strategy, which gradually reduces the penalty parameter $\lambda$ from a large initial value. When continuation is disabled (i.e., $\lambda$ is fixed at $\lambda_0$), the method completely fails to converge, with $\mathrm{err}(Z)$ staying at $2.76 \times 10^{-2}$ and $\mathrm{SE}$ reaching $100\%$ even after $500$ iterations. This demonstrates that starting with a large penalty and gradually decreasing it is essential for accurate recovery, as it allows the algorithm to first establish a reliable sparse support before refining the solution.

\paragraph{$\ell_0$ vs.\ $\ell_1$ penalty}
We replace the $\ell_0$ hard-thresholding penalty with an $\ell_1$ soft-thresholding penalty while keeping the same continuation strategy and all other parameters unchanged. Both methods achieve similar final accuracy ($\mathrm{err}(Z) \approx 1.9 \times 10^{-6}$), but $\ell_0$-AltMin converges in only $27$ iterations compared to $55$ iterations for $\ell_1$ soft-thresholding, a $2\times$ speedup in iteration count. This is because the hard thresholding operator provides starker support selection, allowing the low-rank component to converge faster. The result confirms that the $\ell_0$ penalty is a key contributor to the method's fast convergence.

\begin{table}[t]
\centering
\caption{Ablation study on synthetic data ($m = n = 1500$, $r = 25$, $\alpha = 0.05$, $\mathrm{tol} = 10^{-5}$).}
\label{tab:ablation}
\footnotesize
\setlength{\tabcolsep}{4pt}
\renewcommand{\arraystretch}{0.92}
\begin{tabular}{lrrrrr}
\toprule
Variant & $\mathrm{err}(Z)$ & $\mathrm{err}(S)$ & SE (\%) & iter & CPU (s) \\
\midrule
Full $\ell_0$-AltMin (proposed) & $1.85 \times 10^{-6}$ & $3.25 \times 10^{-5}$ & $0.00$ & $27$ & $6.6$ \\
\quad w/o continuation & $2.76 \times 10^{-2}$ & $1.00$ & $100.0$ & $500$ & $118.8$ \\
\quad $\ell_1$ soft-thresholding & $1.92 \times 10^{-6}$ & $6.50 \times 10^{-5}$ & $0.00$ & $55$ & $6.3$ \\
\bottomrule
\end{tabular}
\end{table}

\subsection{Sensitivity analysis and rectangular matrices}

To further characterize the behavior of $\ell_0$-AltMin, we conduct four additional sets of experiments on synthetic data, namely rectangular matrices with $m \neq n$, sensitivity to the rank parameter $p$, sensitivity to the penalty parameter $\lambda$, and sensitivity to the amplitude bound $\xi$. All experiments use $1000 \times 1000$ matrices unless otherwise noted, with sparsity $\alpha = 0.05$ and the same tolerance $\mathrm{tol} = 10^{-6}$ used for the main tables.

\paragraph{Rectangular matrices}
We test on four rectangular configurations, two tall, $2000 \times 500$ and $2000 \times 200$, and two wide, $500 \times 2000$ and $200 \times 2000$, plus a $1000 \times 1000$ square baseline. The rank is set to $r = 20$ for the square and $4{:}1$ cases and $r = 10$ for the extreme $10{:}1$ configurations. The initial penalty is $\lambda_0 = 30 / \sqrt{\max\{m,n\}}$, the warmup is $k_{\mathrm{warm}} = 10$, the lower bound is $\lambda_{\min} = 10^{-8}\lambda_0$, and the amplitude bound $\xi = 10^{10}$ is inactive. The continuation factor is matched to each configuration and takes the value $\rho = 0.08$, rather than the global default $\rho = 0.8$ used in the preceding tables; as shown below, the tuned schedule mainly shortens the iteration count, the recovery accuracy being essentially unaffected. All five methods are stopped at the common tolerance $\mathrm{tol} = 10^{-6}$ used throughout this section.

Table~\ref{tab:rectangular} summarizes the results. $\ell_0$-AltMin recovers all five configurations, with $\mathrm{err}(Z)$ between $1.5 \times 10^{-7}$ and $2.5 \times 10^{-7}$ and $\mathrm{SE} \le 0.008\%$, and it needs only $20$ to $23$ iterations. Neither the aspect ratio nor the orientation matters: the tall $2000 \times 200$ and the wide $200 \times 2000$ agree to within a factor of two, and the $10{:}1$ case is within a factor of two of the $4{:}1$ case. The tuned continuation factor shortens the iteration count without affecting the accuracy: with the global value $\rho = 0.8$ the two $10{:}1$ configurations still reach $\mathrm{err}(Z) \approx 2.5 \times 10^{-7}$ but need about $110$ iterations, against $22$ for $\rho = 0.08$. The two soft-thresholding baselines are the ones that break down here, and they do so on the tall $4{:}1$ matrix: on $2000 \times 500$ AltMin and LMaFit return $\mathrm{err}(Z) = 3.7 \times 10^{-3}$ and $3.6 \times 10^{-3}$ with $\mathrm{SE} = 1.9\%$, whereas on the remaining four configurations they stay within a factor of two of $\ell_0$-AltMin. IALM is stable across all five configurations with $\mathrm{err}(Z)$ between $3.4 \times 10^{-7}$ and $6.6 \times 10^{-7}$, about $1.5$ to $4.4$ times larger than that of $\ell_0$-AltMin, at $36$ to $43$ iterations. ncRPCA is also stable, with $\mathrm{err}(Z)$ between $3.3 \times 10^{-7}$ and $6.1 \times 10^{-6}$, but it requires $548$ to $1150$ iterations, roughly $25$ to $60$ times the iteration count of $\ell_0$-AltMin, and on the two wide configurations it loses part of the support, its $\mathrm{SE}$ reaching $0.72\%$ and $1.51\%$ against at most $0.008\%$ for $\ell_0$-AltMin. These results indicate that $\ell_0$-AltMin is robust to both the aspect ratio and the orientation.

\begin{table}[ht]
\centering
\caption{Recovery on rectangular matrices with $\alpha = 0.05$. The upper panel lists the two soft-thresholding baselines and the lower panel the convex and hard-thresholding baselines.}
\label{tab:rectangular}
\footnotesize
\setlength{\tabcolsep}{4pt}
\renewcommand{\arraystretch}{0.92}

\begin{tabular}{ccccccccccc}
\toprule
Size & $r$ & \multicolumn{3}{c}{$\ell_0$-AltMin} & \multicolumn{3}{c}{AltMin} & \multicolumn{3}{c}{LMaFit} \\
\cmidrule(lr){3-5} \cmidrule(lr){6-8} \cmidrule(lr){9-11}
 & & err($Z$) & SE & iter & err($Z$) & SE & iter & err($Z$) & SE & iter \\
\midrule
$2000 \times 500$ & 20 & 1.5e-7 & 0.006\% & 22 & 3.7e-3 & 1.90\% & 36 & 3.6e-3 & 1.90\% & 36 \\
$2000 \times 200$ & 10 & 2.0e-7 & 0.000\% & 23 & 2.4e-7 & 0.000\% & 34 & 2.4e-7 & 0.000\% & 34 \\
$500 \times 2000$ & 20 & 1.8e-7 & 0.008\% & 23 & 3.2e-7 & 0.000\% & 34 & 3.2e-7 & 0.000\% & 34 \\
$200 \times 2000$ & 10 & 2.5e-7 & 0.005\% & 22 & 3.1e-7 & 0.000\% & 36 & 3.1e-7 & 0.000\% & 36 \\
$1000 \times 1000$ & 20 & 1.7e-7 & 0.004\% & 20 & 2.0e-7 & 0.000\% & 34 & 2.0e-7 & 0.000\% & 34 \\
\midrule
Size & $r$ & \multicolumn{3}{c}{IALM} & \multicolumn{3}{c}{ncRPCA} & & & \\
\cmidrule(lr){3-5} \cmidrule(lr){6-8}
 & & err($Z$) & SE & iter & err($Z$) & SE & iter & & & \\
\midrule
$2000 \times 500$ & 20 & 6.6e-7 & 0.010\% & 36 & 5.6e-7 & 0.00\% & 1150 & & & \\
$2000 \times 200$ & 10 & 5.3e-7 & 0.000\% & 40 & 3.3e-7 & 0.00\% & 557 & & & \\
$500 \times 2000$ & 20 & 3.4e-7 & 0.008\% & 37 & 2.5e-6 & 0.72\% & 1148 & & & \\
$200 \times 2000$ & 10 & 3.7e-7 & 0.005\% & 40 & 6.1e-6 & 1.51\% & 548 & & & \\
$1000 \times 1000$ & 20 & 5.3e-7 & 0.004\% & 43 & 7.3e-7 & 0.00\% & 1148 & & & \\
\bottomrule
\end{tabular}
\end{table}

\paragraph{Rank sensitivity}
We fix $m = n = 1000$ and $\alpha = 0.05$, and vary the rank $r$ from $5$ to $100$, keeping the default initial penalty $\lambda_0 = 30/\sqrt{m}$ at every rank. Table~\ref{tab:rank} in Appendix~\ref{app:sensitivity} reports the results.

$\ell_0$-AltMin recovers accurately for $r \le 50$, with $\mathrm{err}(Z)$ between $1.2 \times 10^{-7}$ and $4.1 \times 10^{-7}$ and $\mathrm{SE} \le 0.010\%$, and it needs only $102$ to $124$ iterations. At $r = 75$ the behavior changes abruptly, with $\mathrm{err}(Z)$ jumping to $3.3 \times 10^{-1}$ and $\mathrm{SE}$ exceeding $1700\%$, and the degradation persists at $r = 100$, where $\mathrm{err}(Z) = 4.5 \times 10^{-1}$. The soft-thresholding baselines behave in the same way: AltMin and LMaFit recover $\mathrm{err}(Z) < 5 \times 10^{-7}$ up to $r = 50$ and break down for $r \ge 75$, where their $\mathrm{SE}$ also rises to about $1899\%$, consistent with the large-scale failure of soft thresholding documented in Table~\ref{tab:synthetic}. The convex relaxation is the most robust of the baselines: IALM stays accurate across the entire range, with $\mathrm{err}(Z)$ between $3.0 \times 10^{-7}$ and $1.2 \times 10^{-6}$. ncRPCA, the other hard-thresholding method, also recovers every tested rank, with $\mathrm{err}(Z)$ between $2.2 \times 10^{-7}$ and $1.7 \times 10^{-6}$ and $\mathrm{SE} \le 0.07\%$, but it requires $248$ to $5952$ iterations, growing by a full stage of $60$ iterations with each increment of the rank, so that it is about two to fifty times more expensive than $\ell_0$-AltMin.

The phase transition in $\ell_0$-AltMin is explained by the interaction between the rank and the fixed penalty parameter. When the rank is high relative to the matrix size, the residual left after the rank-$p$ truncation carries more energy, and a fixed $\lambda$ becomes too small to separate sparse corruption from low-rank leakage. This underscores the importance of tuning $\lambda$ to the specific rank and problem size, and it motivates the continuation strategy of Algorithm~\ref{alg:l0altmin}, which starts from a large penalty and gradually reduces it.

\paragraph{Penalty sensitivity}
We fix $m = n = 1000$, $r = 25$, and $\alpha = 0.05$, and vary the initial penalty $\lambda_0$ over a 100-fold range from $0.1\,\lambda_0^{\mathrm{base}}$ to $10\,\lambda_0^{\mathrm{base}}$, where $\lambda_0^{\mathrm{base}} = 30/\sqrt{m} \approx 0.95$. Table~\ref{tab:lambda} in Appendix~\ref{app:sensitivity} reports the results.

The behavior exhibits a sharp threshold. At the smallest value, $\lambda_0 = 0.1\,\lambda_0^{\mathrm{base}}$, $\ell_0$-AltMin fails completely, with $\mathrm{SE} = 1875\%$, meaning that nearly every entry is declared sparse. As soon as $\lambda_0 \ge 0.25\,\lambda_0^{\mathrm{base}}$ the method enters a stable regime, with $\mathrm{err}(Z)$ between $2.2 \times 10^{-7}$ and $2.9 \times 10^{-7}$ and $\mathrm{SE} \le 0.008\%$. Across the remaining range, from $0.25\,\lambda_0^{\mathrm{base}}$ up to $10\,\lambda_0^{\mathrm{base}}$, the accuracy is essentially flat and the iteration count grows only mildly, from $102$ to $129$. ncRPCA is unaffected by this parameter, because its threshold is read off the singular values of the residual rather than from $\lambda_0$: it returns exactly the same result, $\mathrm{err}(Z) = 1.03 \times 10^{-6}$ and $\mathrm{SE} = 0.00\%$, after $1448$ iterations, at every point of the range. Its final accuracy is therefore comparable to that of $\ell_0$-AltMin ($1.03 \times 10^{-6}$ against $2.2 \times 10^{-7}$--$2.9 \times 10^{-7}$), but with $1448$ iterations against $102$--$129$ it is far more expensive, since it lacks a continuation scheme tailored to the non-convex formulation.

This threshold behavior has two practical implications. First, the default $\lambda_0 = 30/\sqrt{m}$ is safely above the threshold and provides robust performance over a wide range. Second, the continuation strategy, which starts from a large penalty and decreases it, is well matched to this behavior: by beginning in the stable high-$\lambda$ regime, the algorithm establishes a reliable sparse support before gradually allowing more entries to be included, which helps avoid the under-penalized regime in which the method can diverge.

\paragraph{Box constraint sensitivity}
We fix $m = n = 1000$, $r = 25$, and $\alpha = 0.05$, and vary the amplitude bound $\xi$ from $1$ to $10^{10}$. In this configuration, the true sparse entries have maximum magnitude $\max_{ij}|s^*_{ij}| \approx 101$ and mean magnitude $\approx 18$. All other parameters, including $\lambda_0$, $\rho$, $k_{\mathrm{warm}}$, and $\lambda_{\min}$, are held at their default values.

The results in Table~\ref{tab:box} of Appendix~\ref{app:sensitivity} reveal a sharp phase transition governed by the ratio of $\xi$ to $\max|s^*_{ij}|$. When $\xi \ge 200$, the box constraint clips no true sparse entries, and the recovery quality matches the unconstrained case exactly, with $\mathrm{err}(Z) = 2.88 \times 10^{-7}$ and $\mathrm{SE} = 0.01\%$. At $\xi = 100$, which lies just below $\max|s^*_{ij}| \approx 101$, only a few true entries are truncated, fewer than $0.1\%$ of the support, yet the method converges with a degraded $\mathrm{SE}$ of $3.79\%$ and a $1.6$-fold increase in the iteration count, from $108$ to $172$, showing that even a handful of truncated entries perturbs the continuation trajectory. For $\xi \le 50$, the algorithm fails catastrophically. Even though only $2.7\%$ of true entries exceed $\xi = 50$, the recovery errors exceed $0.3$ in both components and the support error rate reaches $1900\%$, indicating that the number of false detections is roughly $19$ times the true support size.

The failure mechanism is as follows. When a genuine corruption entry of magnitude, say, $80$ is clipped to $\xi = 30$, a residual of $50$ remains in $D - Z - S$ at that position. This residual energy is absorbed by the low-rank component $Z$ in the next SVD step, distorting the column space and generating spurious entries in subsequent thresholding steps. The resulting error cascade propagates across iterations and prevents convergence, as confirmed by the full $200$-iteration budget being exhausted without meeting the stopping tolerance.

These findings yield a practical guideline. When a prior upper bound on corruption magnitudes is available, $\xi$ should be set to at least twice the expected maximum to ensure that the box constraint remains inactive during iteration and does not interfere with the hard-thresholding mechanism. When no prior bound is available, setting $\xi$ to a large value such as $10^{10}$ safely deactivates the constraint entirely, and the algorithm reduces to standard hard thresholding without clipping.

\subsection{Real image restoration}

We evaluate all five methods on two standard grayscale test images, namely the \textit{Cameraman} image and the \textit{House} image (USC-SIPI catalog number 4.1.05 \cite{usc-sipi}), both at $256 \times 256$ resolution. Both are corrupted by $40\%$ impulsive noise with random magnitudes in $[0.3, 1.0]$. The rank estimate is $p = 10$ for both images, and the initial penalty of $\ell_0$-AltMin is tuned per image to $\lambda_0 = 0.030$ for Cameraman and $\lambda_0 = 0.016$ for House, the values that minimize the recovery error at this rank. The ncRPCA implementation follows its stage-wise scheme. It starts from rank $1$ and increases the rank every $60$ iterations up to $p = 10$, thresholding at $\zeta\,\sigma_{k+1}/\sqrt{n}$ with $\zeta = 2/3$, where $\sigma_{k+1}$ denotes the first singular value discarded by the rank-$k$ truncation. We note that natural photographs do not exactly satisfy the low-rank assumption, so absolute recovery errors are higher than in the synthetic experiments. Nevertheless, the relative ranking of methods remains consistent.

Table~\ref{tab:real} summarizes the recovery results. On the Cameraman image, $\ell_0$-AltMin attains $\mathrm{err}(Z) = 0.238$, the smallest of the five. It improves on AltMin, whose error is $0.259$, on IALM, whose error is $0.260$, and on LMaFit, whose error is $0.261$, by $8\%$ to $9\%$, and it improves on ncRPCA, whose error is $0.328$, by $27\%$. On the House image, $\ell_0$-AltMin attains $\mathrm{err}(Z) = 0.136$, matching IALM at $0.136$ and improving on AltMin at $0.178$, LMaFit at $0.181$, and ncRPCA at $0.252$, by $24\%$, $25\%$, and $46\%$, respectively. The advantage is sharper in the support error rate. On Cameraman, $\ell_0$-AltMin reaches $\mathrm{SE} = 21.8\%$, less than half of the $48.5\%$ of IALM and far below the $150\%$ of AltMin, LMaFit, and ncRPCA. On House, it reaches $\mathrm{SE} = 7.7\%$, again the smallest, against $43.6\%$ for IALM and approximately $150\%$ for the remaining methods. With the continuation strategy switched off so that the penalty stays at its tuned value, $\ell_0$-AltMin settles at a fixed point whose residual floor lies far above the stopping tolerance. We therefore terminate it once the relative residual stops changing. This happens after $98$ iterations on Cameraman and $102$ on House, fewer than the $173$--$242$ required by IALM and only about twice the $43$ of AltMin and LMaFit, whereas ncRPCA stops after $14$ and $108$. Together with its accuracy advantage, this makes $\ell_0$-AltMin the fastest method that attains the best low-rank accuracy. Its running time of $0.29$ to $0.30$ seconds is an order of magnitude below the $3.05$ to $4.26$ seconds of IALM, and it is about six times faster than ncRPCA on House ($0.29$ against $1.76$). Only the soft-thresholding baselines AltMin and LMaFit ($0.05$ to $0.12$ seconds) are cheaper, and they pay for this speed with a support error rate of $150\%$ and a low-rank error $9\%$ to $33\%$ higher, so their sparse component is essentially uninformative. $\ell_0$-AltMin therefore dominates the accuracy-efficiency trade-off on real images. It is simultaneously the most accurate in $\mathrm{err}(Z)$, the most precise in support identification, and an order of magnitude cheaper than the only baseline that matches it in accuracy.

Unlike the synthetic setting, $\ell_0$-AltMin now attains the lowest support error rate on both images. The reason is the tuned penalty. When $\lambda_0$ is matched to the image, the hard-thresholding step discards the low-rank leakage that would otherwise be declared sparse, so the false-positive rate stays low even though natural images have slower singular value decay and thus a larger truncation residual at a fixed rank. The comparison for the soft-thresholding methods depends on the support-determination threshold $\varepsilon$ used to declare an entry nonzero. With $\varepsilon = 10^{-6}$, their entries below $\varepsilon$ are counted as zero. Even under this convention their supports remain substantially overestimated, because the shrinkage step biases every retained magnitude downward and thereby distorts the estimated support at this corruption level.

The $\ell_0$ formulation accounts for both effects at once. Because it does not shrink the nonzero entries of $S$, the sparse magnitudes are estimated without bias, and the low-rank component $Z$ is then recovered from a residual that retains the full amplitude of every corruption. This is why $\ell_0$-AltMin is simultaneously the most accurate in $\mathrm{err}(Z)$ and the most precise in support identification, whereas the soft-thresholding baselines trade a biased sparse component for a distorted low-rank component.

\begin{table}[htbp]
\centering
\caption{Recovery results on real images with $40\%$ corruption.}
\label{tab:real}
\footnotesize
\setlength{\tabcolsep}{4pt}
\renewcommand{\arraystretch}{0.92}
\begin{tabular}{l|lccccc}
\toprule
Image & Method & $\mathrm{err}(Z)$ & $\mathrm{err}(S)$ & SE & iter & CPU \\
\midrule
\multirow{5}{*}{Cameraman}  & $\ell_0$-AltMin & 0.238 & 0.443 & 21.8\%  & $98$ & $0.30$ \\
                            & AltMin          & 0.259 & 0.501 & 150.0\% & 43 & 0.12 \\
                            & IALM            & 0.260 & 0.502 & 48.5\%  & 173 & 3.05 \\
                            & LMaFit          & 0.261 & 0.504 & 150.0\% & 43 & 0.05 \\
                            & ncRPCA          & 0.328 & 0.633 & 150.0\% & 14 & 0.23 \\
\midrule
\multirow{5}{*}{House}      & $\ell_0$-AltMin & 0.136 & 0.260 & 7.7\%   & $102$ & $0.29$ \\
                            & AltMin          & 0.178 & 0.338 & 150.0\% & 43 & 0.11 \\
                            & IALM            & 0.136 & 0.259 & 43.6\%  & 242 & 4.26 \\
                            & LMaFit          & 0.181 & 0.343 & 150.0\% & 43 & 0.05 \\
                            & ncRPCA          & 0.252 & 0.478 & 149.9\% & 108 & 1.76 \\
\bottomrule
\end{tabular}
\\[2pt]
\justifying\footnotesize For $\ell_0$-AltMin the penalty $\lambda_0$ is tuned per image by minimizing $\mathrm{err}(Z)$, an oracle choice that favors it over the untuned baselines. It is stopped at a residual plateau, all other methods at $\mathrm{tol} = 10^{-6}$. Definitions as in Table~\ref{tab:synthetic}, and CPU is in seconds. The \textit{House} image is from the USC-SIPI database \cite{usc-sipi}.
\end{table}

Figure~\ref{fig:cameraman_recovery} shows the visual comparison on the Cameraman image. $\ell_0$-AltMin achieves the lowest recovery error, $\mathrm{err}(Z)=0.238$, among all methods, ahead of IALM at $\mathrm{err}(Z)=0.260$ and of AltMin and LMaFit at $0.259$ and $0.261$, which leave visibly stronger residual noise artifacts. ncRPCA yields the highest error, $\mathrm{err}(Z)=0.328$.

\begin{figure}[htbp]
\centering
\includegraphics[width=0.62\linewidth]{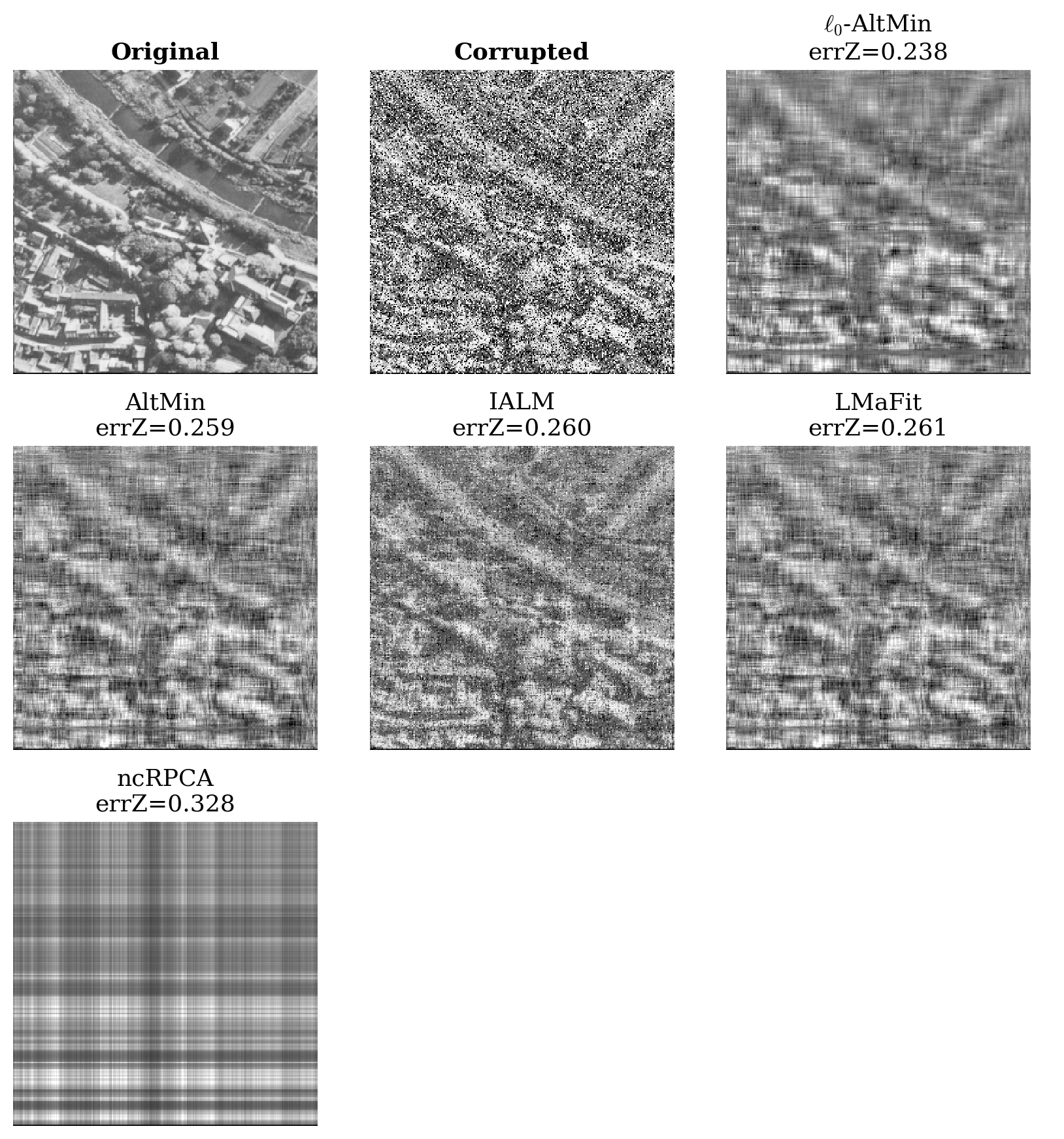}
\caption{Cameraman image restoration with $40\%$ corruption. The top row shows the original, the corrupted image, and the $\ell_0$-AltMin result. The middle row shows AltMin, IALM, and LMaFit. The bottom row shows ncRPCA.}
\label{fig:cameraman_recovery}
\end{figure}

Figure~\ref{fig:sparse_cameraman} compares the extracted sparse components. $\ell_0$-AltMin captures more structural detail in the sparse component, while soft-thresholding-based methods produce more diffuse patterns due to shrinkage bias.

\begin{figure}[htbp]
\centering
\includegraphics[width=0.46\linewidth]{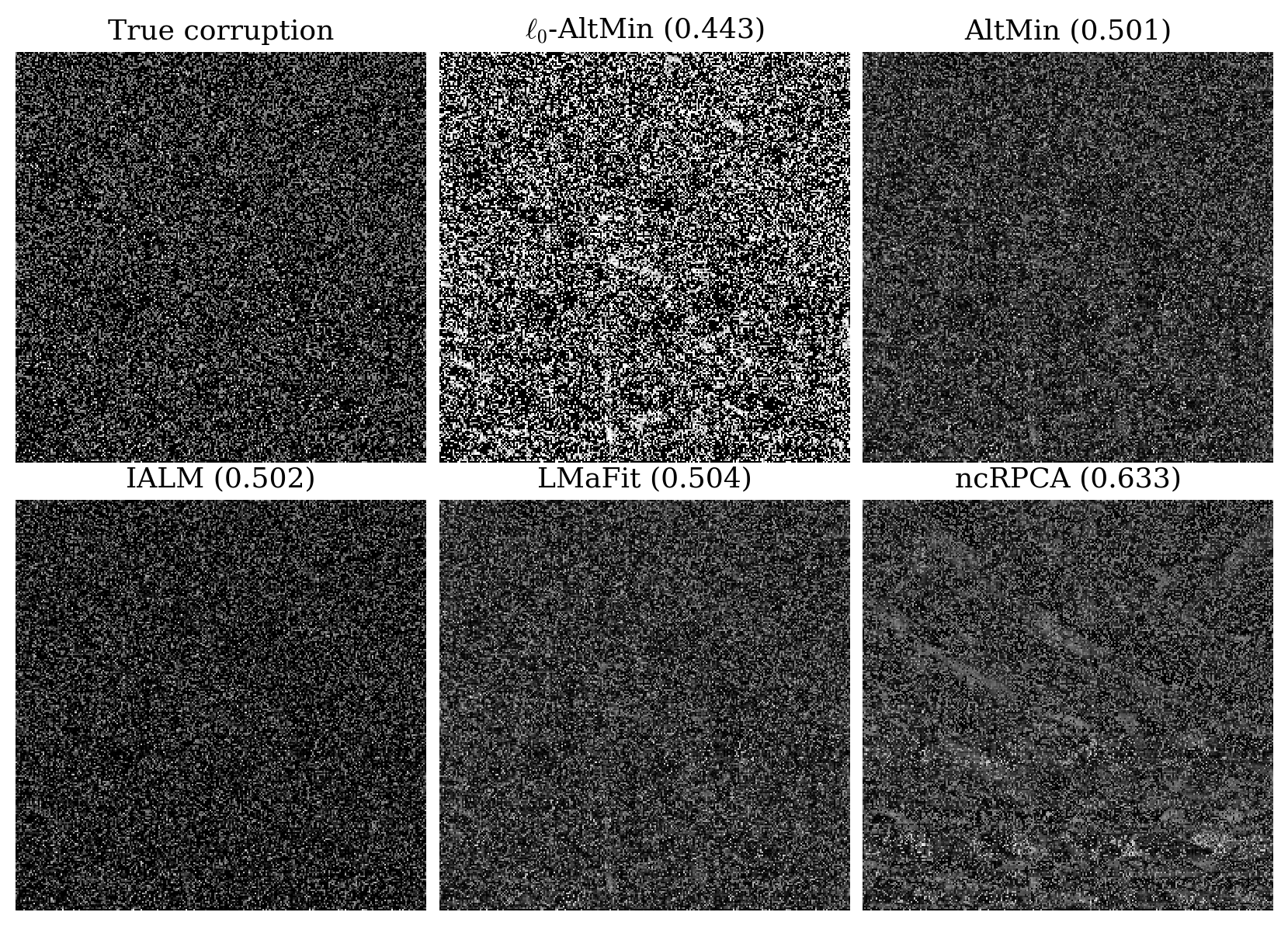}
\caption{Sparse components extracted from the Cameraman image. The top row shows the true corruption, $\ell_0$-AltMin, and AltMin. The bottom row shows IALM, LMaFit, and ncRPCA.}
\label{fig:sparse_cameraman}
\end{figure}

Figure~\ref{fig:house_recovery} shows the restoration results on the House image. $\ell_0$-AltMin achieves $\mathrm{err}(Z)=0.136$, the smallest error together with IALM at $\mathrm{err}(Z)=0.136$, while AltMin and LMaFit yield $\mathrm{err}(Z)=0.178$ and $0.181$ and ncRPCA $\mathrm{err}(Z)=0.252$.

\begin{figure}[htbp]
\centering
\includegraphics[width=0.62\linewidth]{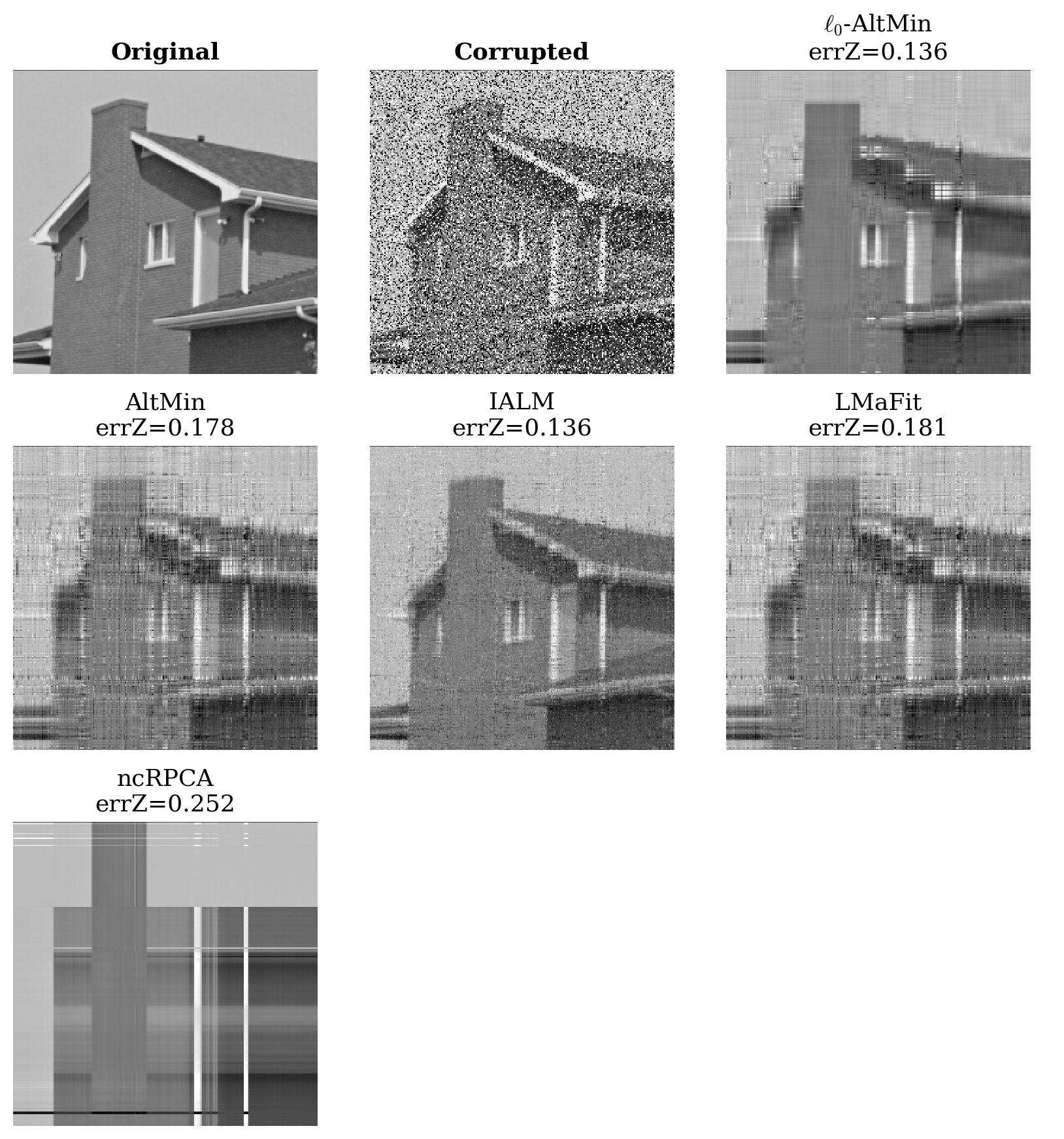}
\caption{House image restoration with $40\%$ corruption. The top row shows the original, the corrupted image, and the $\ell_0$-AltMin result. The middle row shows AltMin, IALM, and LMaFit. The bottom row shows ncRPCA.}
\label{fig:house_recovery}
\end{figure}

Figure~\ref{fig:sparse_house} compares the sparse components on the House image.

\begin{figure}[htbp]
\centering
\includegraphics[width=0.46\linewidth]{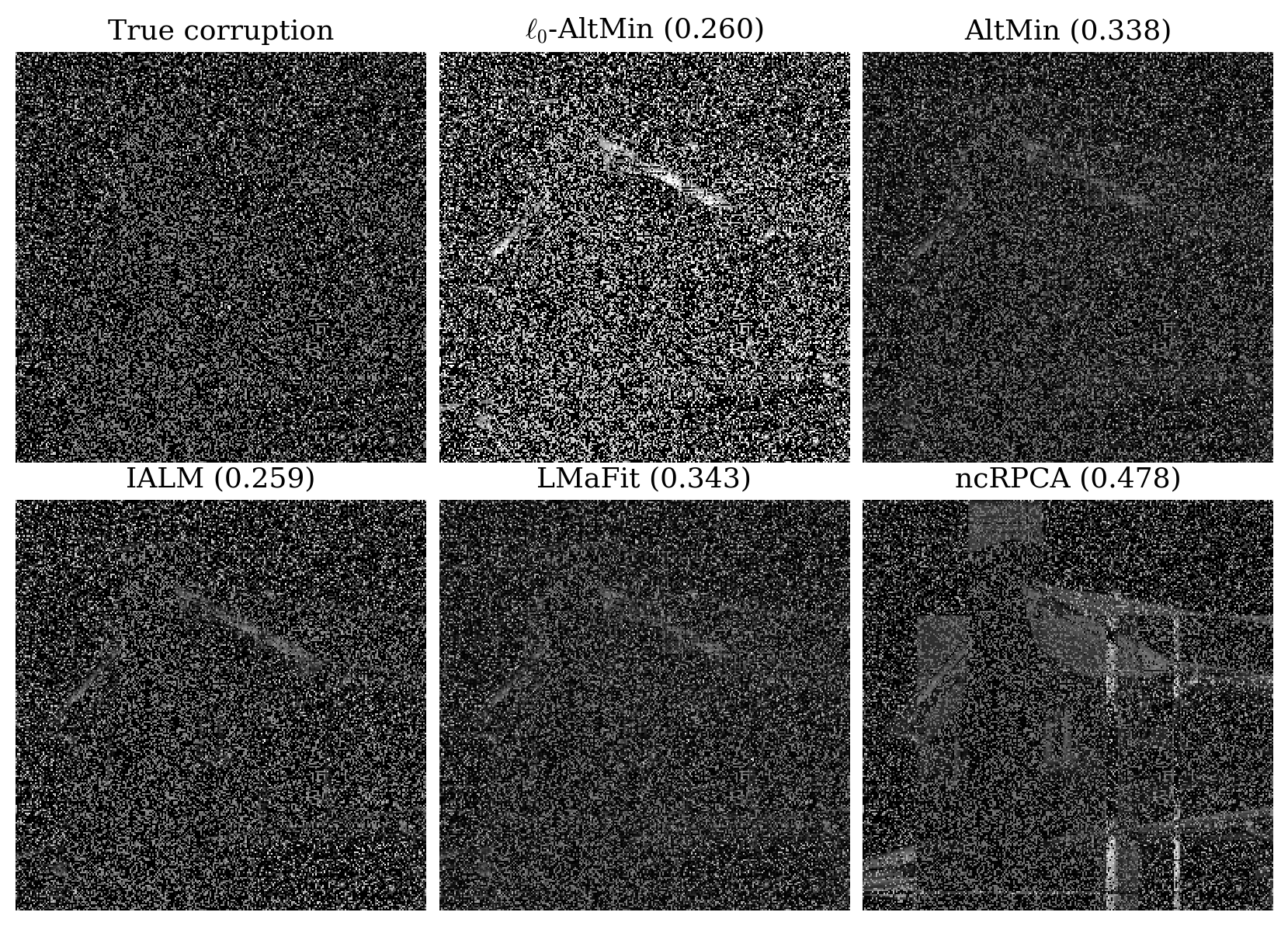}
\caption{Sparse components extracted from the House image. The top row shows the true corruption, $\ell_0$-AltMin, and AltMin. The bottom row shows IALM, LMaFit, and ncRPCA.}
\label{fig:sparse_house}
\end{figure}

Figure~\ref{fig:conv_res_real} compares the convergence of all five methods on both images. The three rows show the relative residual, the low-rank error, and the sparse error, respectively, and the left and right columns are Cameraman and House. With the tuned penalty, $\ell_0$-AltMin lowers the residual to a plateau of approximately $1.3 \times 10^{-1}$ on Cameraman and $6 \times 10^{-2}$ on House within the first few iterations and then remains there, because the penalty is held fixed and the iteration settles at a fixed point instead of reaching the stopping tolerance, so the run is terminated once the residual plateaus, after $98$ iterations on Cameraman and $102$ on House. In the same panels, $\ell_0$-AltMin also drives both the low-rank error $\|Z^{(k)} - Z^{\star}\|_F / \|Z^{\star}\|_F$ and the sparse error $\|S^{(k)} - S^{\star}\|_F / \|S^{\star}\|_F$ to their lowest levels among the five methods and keeps them there, reaching $\mathrm{err}(Z) = 0.238$ and $0.136$ on Cameraman and House, whereas the soft-thresholding baselines stall at visibly higher error floors. IALM reaches the tolerance after $173$--$242$ iterations, AltMin and LMaFit after $43$, and ncRPCA after $14$ to $108$.

\begin{figure}[htbp]
\centering
\includegraphics[width=0.95\linewidth]{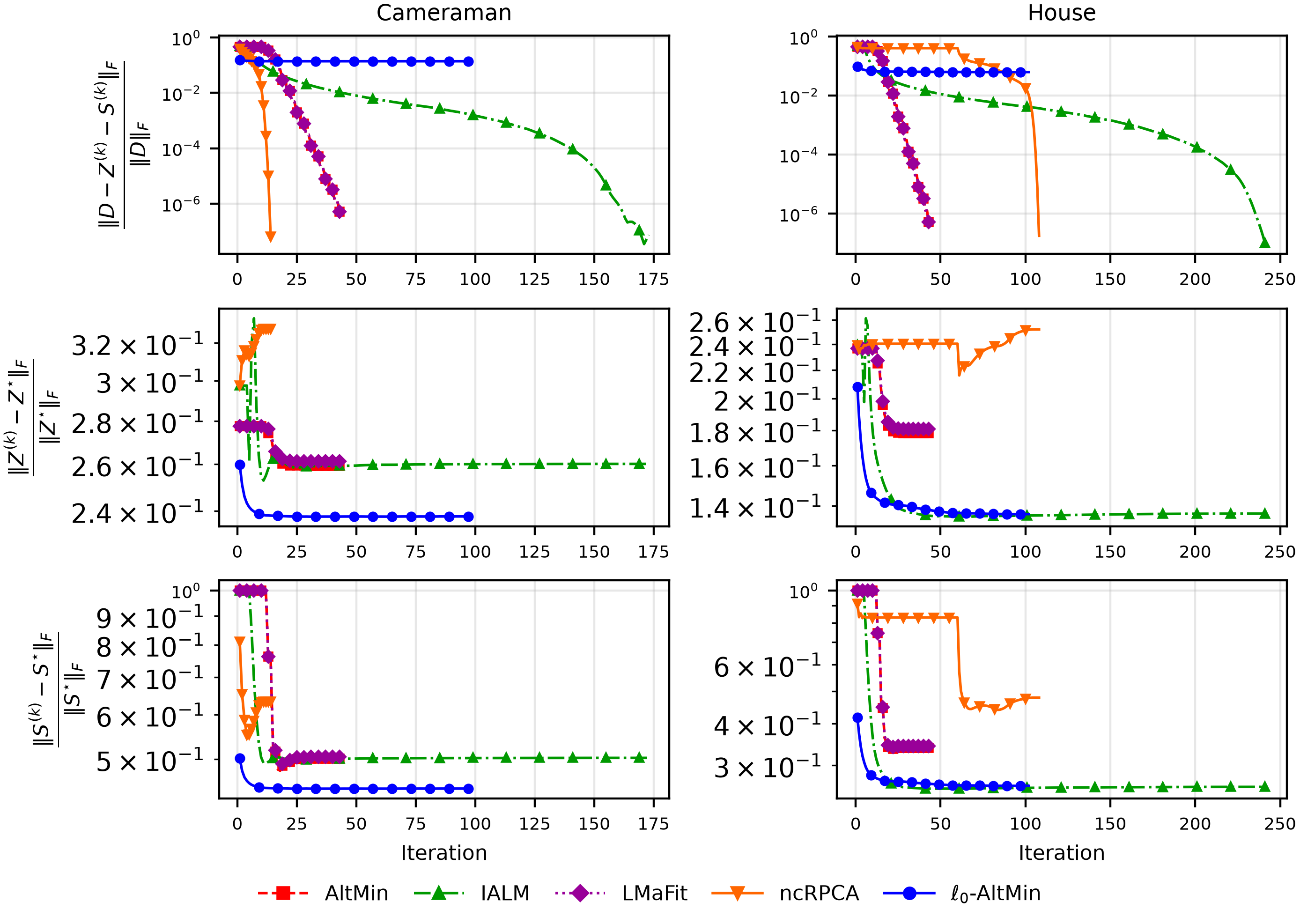}
\caption{Convergence on real images with $40\%$ corruption, where the rows are (a)~the relative residual, (b)~the low-rank error, and (c)~the sparse error, and the columns are Cameraman and House. Definitions as in Table~\ref{tab:synthetic}.}
\label{fig:conv_res_real}
\end{figure}

\subsection{Video background subtraction}

We further evaluate the methods on a real surveillance video background subtraction task, a canonical application of RPCA \cite{Shen2014, Shen2019}. The test video is a corridor surveillance sequence, namely the vtest sequence from OpenCV, at $120 \times 160$ grayscale resolution. Each frame is vectorized into a column, forming $D \in \mathbb{R}^{19200 \times 100}$. The low-rank component $Z$ captures the static background, while the sparse component $S$ captures moving pedestrians.

Since no ground truth decomposition is available for real video, we compare methods by visual quality of the separated background and foreground, foreground sparsity ratio (the fraction of nonzero entries in $S$), and computational cost. The rank parameter is set to $p = 2$ for all nonconvex methods. For $\ell_0$-AltMin on this data set the initial penalty is $\lambda_0 = 1.275/\sqrt{m}$, the continuation uses $\rho = 0.1$, $k_{\mathrm{warm}} = 8$, and $\lambda_{\min} = 10^{-10}\lambda_0$, and the amplitude bound $\xi = 10^{10}$ leaves the box constraint inactive. All methods use $\mathrm{tol} = 10^{-6}$.

Table~\ref{tab:video} reports the computational results. $\ell_0$-AltMin extracts a sparser foreground ($11.2\%$ nonzero) than AltMin ($13.4\%$), LMaFit ($13.4\%$), and ncRPCA ($12.9\%$), indicating that hard thresholding with continuation produces cleaner support with fewer false positives at background locations. ncRPCA, which uses a fixed threshold without continuation, requires $551$ iterations, approximately $32$ times the iteration count of $\ell_0$-AltMin, yet produces a denser foreground. IALM achieves the sparsest foreground, $6.1\%$, but requires $420$ iterations and $87.1$ seconds, approximately $25$ times the iteration count and $40$ times the computational cost of $\ell_0$-AltMin.

\begin{table}[htbp]
\centering
\caption{Video background subtraction results on real surveillance video.}
\label{tab:video}
\footnotesize
\setlength{\tabcolsep}{4pt}
\renewcommand{\arraystretch}{0.92}
\begin{tabular}{lcccc}
\toprule
Method & fg ratio & $\|S\|_F$ & iter & CPU (s) \\
\midrule
$\ell_0$-AltMin & $0.112$ & $94.2$ & $17$  & $2.2$   \\
AltMin          & $0.134$ & $92.4$ & $44$  & $2.9$   \\
IALM            & $0.061$ & $94.0$ & $420$ & $87.1$  \\
LMaFit          & $0.134$ & $92.1$ & $40$  & $2.4$   \\
ncRPCA          & $0.129$ & $96.8$ & $551$ & $50.7$  \\
\bottomrule
\end{tabular}
\\[2pt]
\justifying\footnotesize Here fg ratio is the fraction of nonzero entries in $S$, and $\|S\|_F$ is the Frobenius norm of the sparse component.
\end{table}

Figure~\ref{fig:video_bg} compares the recovered background on four representative frames. All five methods are shown, together with the observed frame for reference. $\ell_0$-AltMin and IALM produce clean background estimates with pedestrians removed, while AltMin and LMaFit retain visible ghosting artifacts from incomplete foreground subtraction. ncRPCA produces background estimates comparable to those of AltMin and LMaFit.

\begin{figure}[htbp]
\centering
\includegraphics[width=0.70\linewidth]{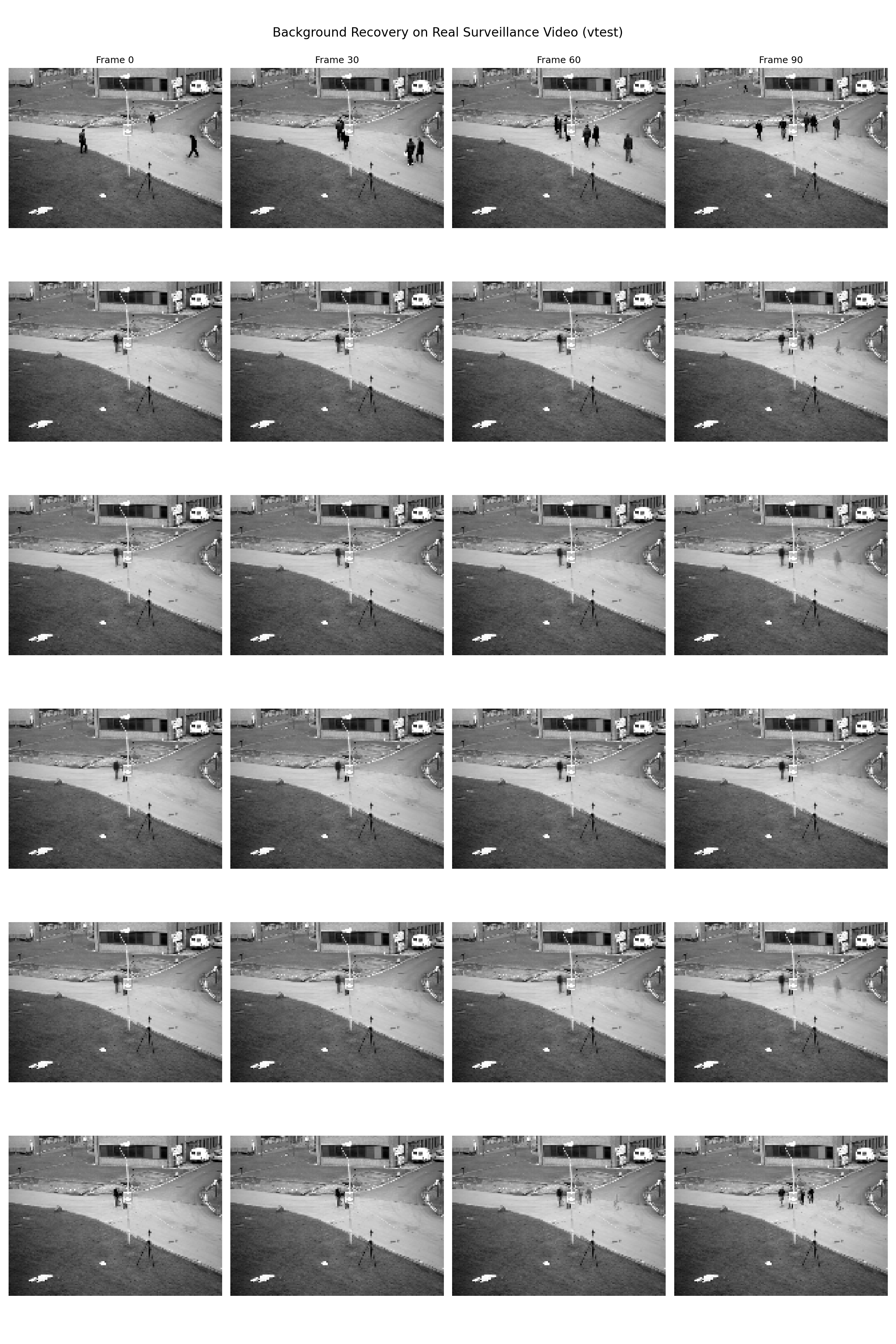}
\caption{Background recovery on real surveillance video. The rows show the observed frame, $\ell_0$-AltMin, AltMin, IALM, LMaFit, and ncRPCA.}
\label{fig:video_bg}
\end{figure}

Figure~\ref{fig:video_fg} compares the extracted foreground. $\ell_0$-AltMin produces clean pedestrian silhouettes with sharp boundaries and minimal background noise. AltMin and LMaFit produce shrunken magnitudes with scattered false positives across the background, consistent with the soft-thresholding bias observed in previous experiments. ncRPCA produces foreground silhouettes comparable to those of $\ell_0$-AltMin, since both use hard thresholding, but requires $551$ iterations versus $17$ for $\ell_0$-AltMin.

\begin{figure}[htbp]
\centering
\includegraphics[width=0.70\linewidth]{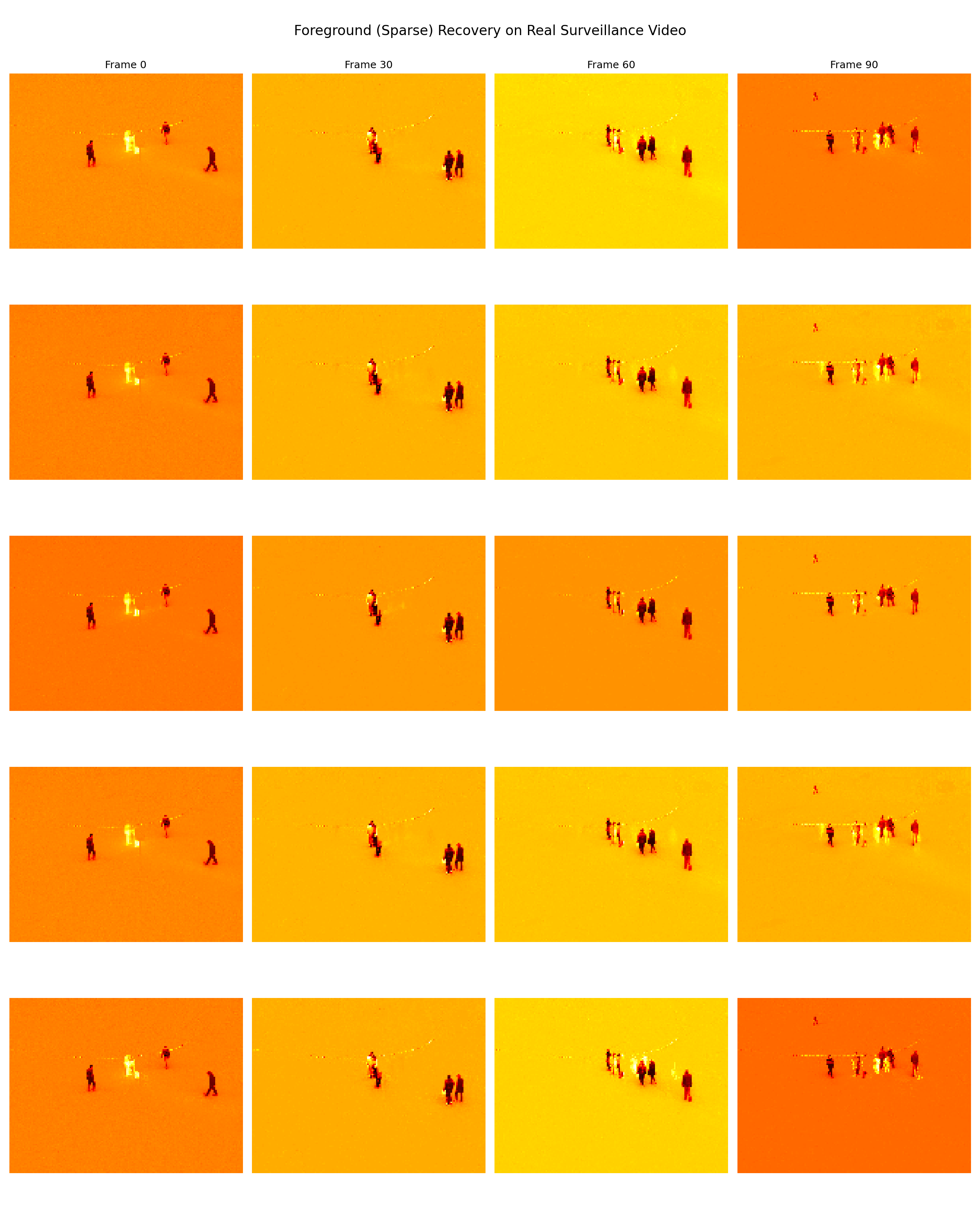}
\caption{Foreground (sparse) recovery on real surveillance video. The rows show $\ell_0$-AltMin, AltMin, IALM, LMaFit, and ncRPCA.}
\label{fig:video_fg}
\end{figure}

Figure~\ref{fig:video_conv} shows the convergence curves. $\ell_0$-AltMin converges in $17$ iterations, fewer than LMaFit ($40$) and AltMin ($44$), and dramatically fewer than IALM ($420$) and ncRPCA ($551$). LMaFit also converges quickly, with a running time comparable to that of $\ell_0$-AltMin, but produces a less sparse foreground.

\begin{figure}[htbp]
\centering
\includegraphics[width=0.95\linewidth]{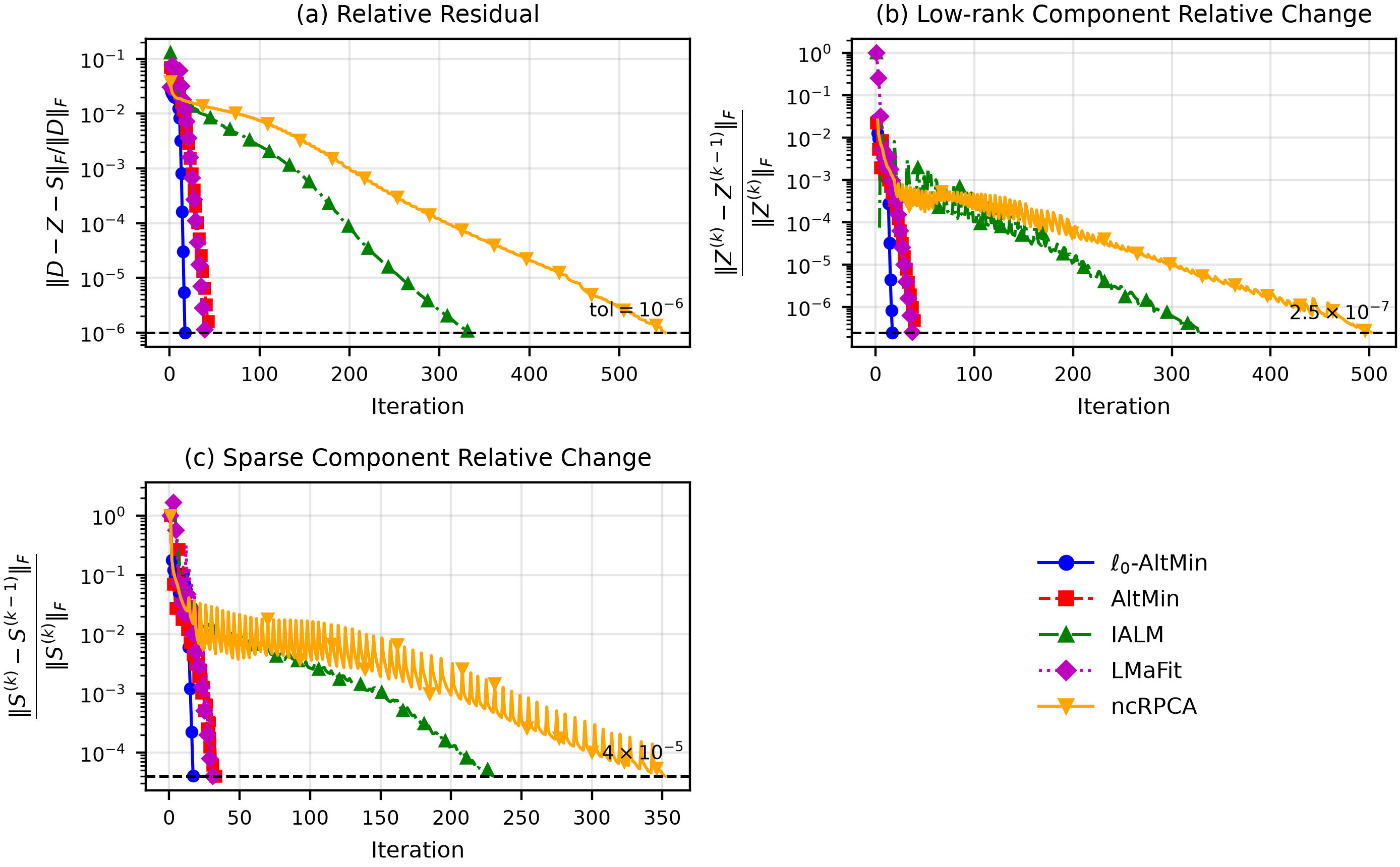}
\caption{Convergence on the video sequence, where (a) is the relative residual, (b) is the relative change of $Z$, and (c) is the relative change of $S$.}
\label{fig:video_conv}
\end{figure}

\subsection{Discussion}

The experiments reveal a clear distinction between the sparse recovery behavior of hard and soft thresholding.

On synthetic data, where the nonzero entries of $S^*$ have well-separated magnitudes, $\ell_0$-AltMin achieves near-perfect support recovery. For every configuration with $m \ge 1000$ in the $40$-configuration study of Section~\ref{sec:experiments}, across all tested ranks up to $r = 35$ and both sparsity levels, it attains $\mathrm{err}(Z) \le 5 \times 10^{-7}$ and $\mathrm{SE} \le 0.02\%$. In contrast, AltMin and LMaFit suffer complete failure at large dimensions with high rank, with $\mathrm{SE}$ reaching $1886\%$ to $1900\%$ at $m = 2000$ and $m = 2500$ with $\alpha = 0.05$, and $9898\%$ at $m = 2500$ with $\alpha = 0.01$, meaning the number of false detections is $19$ to $99$ times the true support size, because soft thresholding leaves residual small values that inflate the estimated support with false positives.

On real images, the situation is more nuanced. With $40\%$ sparse corruption and a low-rank approximation of rank $p = 10$, all methods face the fundamental challenge that a significant fraction of the corruption cannot be separated from the low-rank component. Nevertheless, $\ell_0$-AltMin consistently achieves lower low-rank recovery error than AltMin and LMaFit. On Cameraman, $\mathrm{err}(Z)=0.238$ versus $0.259$ and $0.261$, and on House, $\mathrm{err}(Z)=0.136$ versus $0.178$ and $0.181$. On the House image, $\ell_0$-AltMin matches IALM at $\mathrm{err}(Z)=0.136$, and on the Cameraman image it improves on IALM, $\mathrm{err}(Z)=0.238$ versus $0.260$. It also attains the lowest support error rate on both images, $21.8\%$ and $7.7\%$, against $48.5\%$ and $43.6\%$ for IALM. Moreover, it reaches this accuracy in only $98$ and $102$ iterations ($0.29$--$0.30$ seconds), so the lead over IALM in low-rank accuracy and support precision comes together with an order-of-magnitude reduction in runtime rather than at the expense of it. ncRPCA yields the highest error on both images, $\mathrm{err}(Z)=0.252$--$0.328$, on natural images whose low-rank structure is only approximate.

On real surveillance video, $\ell_0$-AltMin extracts a sparser and cleaner foreground ($11.2\%$ nonzero ratio) than AltMin ($13.4\%$), LMaFit ($13.4\%$), and ncRPCA ($12.9\%$), with visually sharper pedestrian silhouettes and less background contamination. ncRPCA requires $551$ iterations versus $17$ for $\ell_0$-AltMin, showing that the continuation strategy in $\ell_0$-AltMin is essential for fast convergence. IALM achieves the sparsest foreground, $6.1\%$, but at approximately $40$ times the computational cost of $\ell_0$-AltMin and $25$ times the iteration count.

IALM, the convex relaxation, is the most robust baseline on synthetic data. It never fails, and it attains $\mathrm{err}(Z)$ of the same order as $\ell_0$-AltMin, $(3.2\mbox{--}9.8) \times 10^{-7}$ against $(2.3\mbox{--}4.1) \times 10^{-7}$, because the convex relaxation guarantees global optimality. However, it requires approximately up to twice the computational cost of $\ell_0$-AltMin on synthetic data, and $25$ to $40$ times on video. On real images, $\ell_0$-AltMin achieves comparable or even better low-rank recovery accuracy than IALM, together with a strictly lower support error rate, at about a tenth of the computational cost ($0.29$--$0.30$ against $3.05$--$4.26$ seconds). ncRPCA, which also uses hard thresholding, achieves support recovery quality comparable to $\ell_0$-AltMin on synthetic data but requires about twenty times as many iterations there. On real images it is markedly less accurate than $\ell_0$-AltMin, with $\mathrm{err}(Z) = 0.252$--$0.328$ against $0.136$--$0.238$ and a support error rate close to $150\%$, and it is about six times slower on House. On video it requires $32$ times as many iterations, owing to the absence of a continuation strategy. $\ell_0$-AltMin thus occupies a favorable position in the accuracy-efficiency trade-off, matching or exceeding the accuracy of both nonconvex and convex baselines on real-world data while maintaining computational efficiency.

The box constraint sensitivity experiment, reported in Table~\ref{tab:box} of Appendix~\ref{app:sensitivity}, demonstrates that the amplitude bound $\xi$ exhibits a sharp phase transition. When $\xi$ exceeds twice the maximum true corruption magnitude, the constraint is inactive and the results are identical to the unconstrained case. When $\xi$ falls below the maximum magnitude, even clipping a small fraction of true entries triggers an error cascade that prevents convergence. This finding validates the practical guideline of setting $\xi$ conservatively large when no prior amplitude bound is available, and confirms that the box constraint is a safe feature that does not degrade performance when set appropriately.

\section{Conclusion}\label{sec:conclusion}

This paper has introduced an alternating minimization framework for RPCA that operates directly on the $\ell_0$ sparsity penalty rather than its $\ell_1$ convex surrogate. The central insight is that, under the Frobenius-norm loss, the $\ell_0$ sparse subproblem decouples entrywise and admits a closed-form hard-thresholding solution with box clipping, thereby sidestepping the magnitude bias inherent in soft-thresholding-based approaches. On the theoretical side, we established convergence of the iterates to block-coordinate optimal points, uniqueness of each block minimizer under non-degeneracy and singular value gap assumptions, both of which hold generically, local linear convergence under a contraction condition $q < 1$ on the Fr\'echet derivative of the truncated SVD operator, which likewise holds generically under random support and bounded coherence, and a second-order optimality condition via the Riemannian Hessian on the fixed-rank manifold, whose positive definiteness is guaranteed by the same condition $q < 1$ through the decomposition $\operatorname{Hess}\, g = \operatorname{Hess}_0 - \mathcal{C}$ with $\operatorname{Hess}_0 \succ 0$ and $r(\operatorname{Hess}_0^{-1}\mathcal{C}) \le q$. While the nonconvexity of the $\ell_0$ formulation precludes a global optimality guarantee, the clean two-block structure and the exact closed-form solution of the sparse subproblem distinguish this approach from other $\ell_0$-based methods that rely on surrogate approximations or factorized representations.

Numerical evidence on synthetic, image, and video data confirms that the hard-thresholding update delivers sharper support recovery and more faithful magnitude preservation than its soft-thresholding counterparts, while remaining computationally competitive with existing nonconvex solvers. On real surveillance video, the proposed scheme converges in $17$ iterations, a substantial reduction compared to the $420$ iterations required by the convex nuclear-norm relaxation and the $551$ iterations required by ncRPCA, while producing cleaner foreground separation. On real images with $40\%$ corruption, $\ell_0$-AltMin achieves comparable or better low-rank recovery accuracy than IALM at roughly a tenth of the computational cost. On synthetic data, the convex relaxation, namely IALM, attains $\mathrm{err}(Z)$ of the same order as the proposed method, $(3.2\mbox{--}9.8) \times 10^{-7}$ against $(2.3\mbox{--}4.1) \times 10^{-7}$, but at approximately twice the computational cost of the proposed method. Compared with ncRPCA, which also uses hard thresholding, $\ell_0$-AltMin achieves comparable support recovery accuracy while requiring about twenty times fewer iterations on synthetic data and thirty-two times fewer on video, confirming that $\ell_0$-AltMin occupies a favorable position in the accuracy-efficiency trade-off.

Several extensions appear natural for future investigation. First, incorporating adaptive rank selection as in \cite{Xu2021} would remove the need to specify $p$ in advance, broadening the practical applicability of the method. Second, generalizing to tensor-valued data along the lines of \cite{Alawode2025,Han2023} would extend the hard-thresholding idea to spatio-temporal and multiway settings. Third, exploring learned continuation strategies inspired by \cite{Cai2021,Joukovsky2024} could further accelerate convergence on difficult instances. Finally, establishing statistical recovery guarantees under a probabilistic data model, in the spirit of the exact recovery results for PCP \cite{Candes2011} and nonconvex methods \cite{Netrapalli2014}, would be a valuable theoretical complement to the algorithmic and convergence results presented here.

\appendix
\section{Proofs}\label{app:proofs}

This appendix collects the proofs of all theoretical results stated in Sections~\ref{sec:main} and~\ref{sec:convergence}.

\begin{proof}[Proof of Lemma~\ref{lem:scalar_l0}]
	Since $\phi(s)$ has a discontinuity at $s = 0$, we enumerate the two candidate solution types.
	
	For $s = 0$, the objective value is $J_0 = \phi(0) = \frac{1}{2}r^2$, which is always feasible since $|0| \le \xi$.
	
	For $s \neq 0$, we have $\mathbf{1}_{s \neq 0} = 1$, and we minimize the quadratic $\frac{1}{2}(r - s)^2$ subject to $|s| \le \xi$ and $s \neq 0$. The unconstrained minimizer is $s = r$. If $|r| \le \xi$, then $s = r$ is feasible and nonzero when $r \neq 0$, giving $J_1 = \lambda$. If $|r| > \xi$, the constrained minimizer of the quadratic on $[-\xi, \xi]$ is $s = \operatorname{sgn}(r) \cdot \xi$, giving $J_1 = \lambda + \frac{1}{2}(|r| - \xi)^2$. Combining both subcases yields $J_1 = \lambda + \frac{1}{2}(\max\{|r| - \xi, 0\})^2$, with the minimizer $s_1 = \operatorname{sgn}(r) \cdot \min\{|r|, \xi\}$.
	
	The global minimizer is $s^* = 0$ if $J_0 \le J_1$, and $s^* = s_1$ if $J_1 < J_0$. When $J_0 = J_1$, both are global minimizers, and we select $s^* = 0$ for definiteness. This yields~\eqref{eq:scalar_solution}.
\end{proof}

\begin{proof}[Proof of Theorem~\ref{thm:S_solution}]
	Since the $S$-subproblem is entrywise separable (as noted above), the problem decomposes into $mn$ independent scalar problems. By Lemma~\ref{lem:scalar_l0}, each scalar problem is solved exactly by~\eqref{eq:scalar_solution}. Since each entry is globally optimized independently, the matrix-level solution is globally optimal. The computational cost is $O(mn)$ since each entry requires a constant number of arithmetic operations.
\end{proof}

\begin{proof}[Proof of Theorem~\ref{thm:Z_solution}]
	We first show that the minimum is attained. The objective function $h(Z) = \frac{1}{2}\|A - Z\|_F^2$ is continuous. The feasible set $\mathcal{F}_p := \{Z \in \mathbb{R}^{m \times n} \mid \operatorname{rank}(Z) \le p\}$ is closed, because $\operatorname{rank}(Z) \le p$ if and only if $\sigma_{p+1}(Z) = 0$, and the singular values $\sigma_i(Z)$ are continuous functions of $Z$ (see \cite{StewartSun1990}, Chapter~1). We verify that the sublevel sets of $h$ are bounded by observing that $Z = \boldsymbol{O}$ is feasible owing to $\operatorname{rank}(\boldsymbol{O}) = 0 \le p$, which implies that the infimum satisfies
	\[
	\inf_{Z \in \mathcal{F}_p} h(Z) \le h(\boldsymbol{O}) = \frac{1}{2}\|A\|_F^2.
	\]
	Therefore, any minimizing sequence $\{Z_n\}$ with $h(Z_n) \to \inf h$ eventually satisfies $h(Z_n) \le \|A\|_F^2$, which gives
	\[
	\|Z_n\|_F \le \|A - Z_n\|_F + \|A\|_F \le (1+\sqrt{2})\|A\|_F.
	\]
	Thus the minimizing sequence is bounded. By the Bolzano--Weierstrass theorem, it has a convergent subsequence, and since $\mathcal{F}_p$ is closed, the limit belongs to $\mathcal{F}_p$. By continuity of $h$, the limit attains the infimum.
	
	The fact that the minimizer is given by the truncated SVD~\eqref{eq:Z_update} is the Eckart--Young--Mirsky theorem \cite{EckartYoung1936}, which states that the best rank-at-most-$p$ approximation of $A$ in the Frobenius norm is $\sum_{i=1}^{\min(p, \operatorname{rank}(A))} \sigma_i u_i v_i^{\mathrm{T}}$.
	
	For uniqueness, when $\operatorname{rank}(A) \le p$, the truncated SVD equals $A$ itself, which is trivially unique. When $\operatorname{rank}(A) > p$ and $\sigma_p(A) > \sigma_{p+1}(A)$, the top-$p$ singular vectors are unique up to simultaneous sign flips, so the truncated SVD $Z^{(k)}$ is unique. When $\sigma_p(A) = \sigma_{p+1}(A)$, the top-$p$ singular subspace is not unique, and neither is the minimizer.
\end{proof}

\begin{proof}[Proof of Theorem~\ref{thm:monotone}]
	Since $S^{(k)}$ is the global minimizer of the $S$-subproblem~\eqref{eq:S_sub} with $Z$ fixed at $Z^{(k-1)}$, we have
	\[
	\frac{1}{2}\|D - Z^{(k-1)} - S^{(k)}\|_F^2 + \lambda\|S^{(k)}\|_0 \le \frac{1}{2}\|D - Z^{(k-1)} - S^{(k-1)}\|_F^2 + \lambda\|S^{(k-1)}\|_0.
	\]
	Since $Z$ is unchanged in this step, this gives $f(S^{(k)}, Z^{(k-1)}) \le f(S^{(k-1)}, Z^{(k-1)})$. Similarly, since $Z^{(k)}$ is the global minimizer of the $Z$-subproblem~\eqref{eq:Z_sub} with $S$ fixed at $S^{(k)}$,
	\[
	\frac{1}{2}\|D - S^{(k)} - Z^{(k)}\|_F^2 \le \frac{1}{2}\|D - S^{(k)} - Z^{(k-1)}\|_F^2,
	\]
	which gives $f(S^{(k)}, Z^{(k)}) \le f(S^{(k)}, Z^{(k-1)})$. Combining yields the result.
\end{proof}

\begin{proof}[Proof of Theorem~\ref{thm:boundedness}]
	Since $\|S^{(k)}\|_\infty \le \xi$ by construction and $\|S^{(k)}\|_0 \le mn$, we have $\|S^{(k)}\|_F \le \xi\sqrt{mn}$, so $\{S^{(k)}\}$ is bounded.
	
	For $\{Z^{(k)}\}$, since $Z^{(k)}$ is the global minimizer of $\min_{\operatorname{rank}(Z) \le p} \frac{1}{2}\|D - S^{(k)} - Z\|_F^2$, and since $Z = \boldsymbol{O}$ is feasible as $\operatorname{rank}(\boldsymbol{O}) = 0 \le p$, we have
	\[
	\frac{1}{2}\|D - S^{(k)} - Z^{(k)}\|_F^2 \le \frac{1}{2}\|D - S^{(k)}\|_F^2.
	\]
	By the triangle inequality,
	\[
	\|Z^{(k)}\|_F \le \|Z^{(k)} - (D - S^{(k)})\|_F + \|D - S^{(k)}\|_F \le 2\|D - S^{(k)}\|_F \le 2\bigl(\|D\|_F + \xi\sqrt{mn}\bigr).
	\]
	Thus $\{Z^{(k)}\}$ is bounded. Since $\{(S^{(k)}, Z^{(k)})\}$ is a bounded sequence in the finite-dimensional space $\mathbb{R}^{m \times n} \times \mathbb{R}^{m \times n}$, by the Bolzano--Weierstrass theorem it has at least one accumulation point.
\end{proof}

\begin{proof}[Proof of Theorem~\ref{thm:accumulation_optimality}]
	By Theorem~\ref{thm:monotone}, $f(S^{(k)}, Z^{(k)})$ is monotonically non-increasing and bounded below by $0$, so it converges to some $f^* \ge 0$. Let $(S^{(k_j)}, Z^{(k_j)}) \to (\widetilde{S}, \widetilde{Z})$ be a convergent subsequence, whose existence is guaranteed by Theorem~\ref{thm:boundedness}.
	
	Since $Z^{(k_j)} \to \widetilde{Z}$ and the hard-thresholding operator $\mathcal{H}_{\lambda,\xi}$ is continuous at $D - \widetilde{Z}$ by the non-degeneracy assumption, we have
	\[
	S^{(k_j+1)} = \mathcal{H}_{\lambda,\xi}(D - Z^{(k_j)}) \to \mathcal{H}_{\lambda,\xi}(D - \widetilde{Z}) =: \widetilde{S}_*.
	\]
	By non-degeneracy, there exists $\delta > 0$ such that each entry of $D - \widetilde{Z}$ is at least $\delta$ away from the threshold boundary. For sufficiently large $j$, every entry of $D - Z^{(k_j)}$ stays on the same side of the threshold as the corresponding entry of $D - \widetilde{Z}$. Therefore the support of $S^{(k_j+1)}$ is constant for large $j$ and equals the support of $\widetilde{S}_*$. Let this common support be denoted by $\mathcal{I}$. On this fixed support, the objective $f(S, Z) = \frac{1}{2}\|D - Z - S\|_F^2 + \lambda|\mathcal{I}|$ is continuous in both variables.
	
	By S-optimality and Z-optimality, we have
	\[
	f(S^{(k_j+1)}, Z^{(k_j+1)}) \le f(S^{(k_j+1)}, Z^{(k_j)}) \le f(S^{(k_j)}, Z^{(k_j)}).
	\]
	The leftmost and rightmost terms both converge to $f^*$ as subsequences of the convergent sequence $\{f^{(k)}\}$. The middle term $f(S^{(k_j+1)}, Z^{(k_j)})$ converges to $f(\widetilde{S}_*, \widetilde{Z})$ by continuity on the fixed support $\mathcal{I}$. Taking limits yields $f^* \le f(\widetilde{S}_*, \widetilde{Z}) \le f^*$, so $f(\widetilde{S}_*, \widetilde{Z}) = f^*$.
	
	Since $\widetilde{S}_* = \mathcal{H}_{\lambda,\xi}(D - \widetilde{Z})$ is the global minimizer of the $S$-subproblem at $\widetilde{Z}$, we have $f(\widetilde{S}_*, \widetilde{Z}) \le f(S, \widetilde{Z})$ for all feasible $S$, and in particular $f(\widetilde{S}_*, \widetilde{Z}) \le f(\widetilde{S}, \widetilde{Z})$. On the other hand, by lower semicontinuity of the $\ell_0$ norm and continuity of the Frobenius norm,
	\[
	f(\widetilde{S}, \widetilde{Z}) = \frac{1}{2}\|D - \widetilde{Z} - \widetilde{S}\|_F^2 + \lambda\|\widetilde{S}\|_0 \le \liminf_{j \to \infty} f(S^{(k_j)}, Z^{(k_j)}) = f^*.
	\]
	Combining gives $f^* = f(\widetilde{S}_*, \widetilde{Z}) \le f(\widetilde{S}, \widetilde{Z}) \le f^*$, so $f(\widetilde{S}, \widetilde{Z}) = f^*$ and $f(\widetilde{S}_*, \widetilde{Z}) = f(\widetilde{S}, \widetilde{Z})$. By uniqueness of the global $S$-minimizer under the non-degeneracy condition, we conclude $\widetilde{S}_* = \widetilde{S}$. Thus $\widetilde{S} = \mathcal{H}_{\lambda,\xi}(D - \widetilde{Z})$, which proves statement (i).
	
	For statement (ii), since $S^{(k_j+1)} \to \widetilde{S}$ by the above, we have $D - S^{(k_j+1)} \to D - \widetilde{S}$. By the singular value gap assumption, the truncated SVD operator $\mathcal{T}_p$ is continuous at $D - \widetilde{S}$, so
	\[
	Z^{(k_j+1)} = \mathcal{T}_p(D - S^{(k_j+1)}) \to \mathcal{T}_p(D - \widetilde{S}) =: \widetilde{Z}_*.
	\]
	From the sandwich inequality, we also have $f(S^{(k_j+1)}, Z^{(k_j+1)}) \to f^*$. Since $S^{(k_j+1)}$ has fixed support $\mathcal{I}$ for large $j$ and $Z^{(k_j+1)} \to \widetilde{Z}_*$, the objective is continuous along this sequence, so $f(\widetilde{S}, \widetilde{Z}_*) = f^*$. Now, $\widetilde{Z}_* = \mathcal{T}_p(D - \widetilde{S})$ is the global minimizer of the $Z$-subproblem at $\widetilde{S}$, so $f(\widetilde{S}, \widetilde{Z}_*) \le f(\widetilde{S}, Z)$ for all feasible $Z$. In particular, $f(\widetilde{S}, \widetilde{Z}_*) \le f(\widetilde{S}, \widetilde{Z}) = f^*$. Combined with $f(\widetilde{S}, \widetilde{Z}_*) = f^*$, this gives $f(\widetilde{S}, \widetilde{Z}_*) = f(\widetilde{S}, \widetilde{Z})$. Under the singular value gap condition, the global $Z$-minimizer is unique, so $\widetilde{Z}_* = \widetilde{Z}$. Thus $\widetilde{Z} = \mathcal{T}_p(D - \widetilde{S})$, which proves statement (ii).
	
	For statement (iii), suppose $\operatorname{rank}(\widetilde{Z}) = p$ with SVD $\widetilde{Z} = \widetilde{U}\widetilde{\Sigma}\widetilde{V}^{\mathrm{T}}$. The $Z$-subproblem at $\widetilde{S}$ is
	\[
	\min_{\operatorname{rank}(Z) \le p} \frac{1}{2}\|D - \widetilde{S} - Z\|_F^2.
	\]
	The Euclidean gradient at $Z = \widetilde{Z}$ is $\widetilde{Z} + \widetilde{S} - D$. The first-order condition on the manifold $\mathcal{M}_p$ requires this gradient to be orthogonal to the tangent space, which gives~\eqref{eq:Z_stationarity}.
	
	Now suppose $\operatorname{rank}(\widetilde{Z}) < p$. By statement (ii), $\widetilde{Z} = \mathcal{T}_p(D - \widetilde{S})$, which is the best rank-at-most-$p$ approximation of $A := D - \widetilde{S}$. By the Eckart--Young--Mirsky theorem, $\widetilde{Z} = \sum_{i=1}^{\min(p, \operatorname{rank}(A))} \sigma_i(A) u_i v_i^{\mathrm{T}}$. Since $\operatorname{rank}(\widetilde{Z}) < p$, we have $\min(p, \operatorname{rank}(A)) < p$, which implies $\operatorname{rank}(A) < p$. Therefore $A$ itself has rank less than $p$, and the truncated SVD of $A$ to $p$ terms includes all nonzero terms, giving $\widetilde{Z} = A = D - \widetilde{S}$. The residual $\|D - \widetilde{S} - \widetilde{Z}\|_F = 0$, and the gradient $\widetilde{Z} - (D - \widetilde{S}) = \boldsymbol{O}$ vanishes identically, so the first-order condition is satisfied trivially.
\end{proof}

\begin{proof}[Proof of Theorem~\ref{thm:local_optimal}]
	Let $\mathcal{I} := \operatorname{supp}(\widetilde{S})$ and $\widetilde{R} := D - \widetilde{Z}$. The condition~\eqref{eq:box_inactivity}, together with Theorem~\ref{thm:accumulation_optimality}(i), is equivalent to $|D_{ij} - \widetilde{z}_{ij}| < \xi$ for all $(i,j) \in \mathcal{I}$, meaning no nonzero entry of $\widetilde{S}$ is clipped at the accumulation point.
	By the non-degeneracy assumption, for each $(i,j)$ the costs of the two branches introduced in Remark~\ref{rem:nondegeneracy}, namely the zero branch $J_0$ and the non-zero branch $J_1$, satisfy
	\[
	J_0(\widetilde{r}_{ij}) := \tfrac{1}{2}\widetilde{r}_{ij}^2, \qquad J_1(\widetilde{r}_{ij}) := \lambda + \tfrac{1}{2}\bigl(\max\{|\widetilde{r}_{ij}| - \xi, 0\}\bigr)^2
	\]
	and $J_0 \neq J_1$. Define $d_{ij} := |J_0(\widetilde{r}_{ij}) - J_1(\widetilde{r}_{ij})| > 0$ for each $(i,j)$. Since there are finitely many coordinates, $\delta := \frac{1}{4}\min_{i,j} d_{ij} > 0$. By the continuity of $J_0$ and $J_1$, there exists $\varepsilon > 0$ such that for all $Z$ with $\|Z - \widetilde{Z}\|_\infty < \varepsilon$, the optimal branch of each entry is unchanged and the cost of the incorrect branch exceeds that of the correct (optimal) branch by at least $2\delta$. Moreover, the strict box-inactivity condition~\eqref{eq:box_inactivity} gives $|\widetilde{r}_{ij}| < \xi$ for all $(i,j) \in \mathcal{I}$. Since $\mathcal{I}$ is finite, there is a positive margin, so by shrinking $\varepsilon$ we may ensure that $|D_{ij} - z_{ij}| < \xi$ for all $(i,j) \in \mathcal{I}$ whenever $\|Z - \widetilde{Z}\|_\infty < \varepsilon$. Consequently, in this neighborhood the hard-thresholding operator reduces to the entrywise projection without clipping.
	\begin{equation}\label{eq:box_inactive_local}
	\mathcal{H}_{\lambda,\xi}(D - Z) = P_{\mathcal{I}}(D - Z),
	\end{equation}
	and the support remains $\mathcal{I}$.

	Define $g(Z) := \min_{S:\, \operatorname{supp}(S) = \mathcal{I}} f(S, Z) = \frac{1}{2}\|(D - Z)_{\mathcal{I}^c}\|_F^2 + \lambda|\mathcal{I}|$, where $(\cdot)_{\mathcal{I}}$ denotes the matrix restricted to the index set $\mathcal{I}$, that is, $(R_{\mathcal{I}})_{ij} = r_{ij}$ if $(i,j) \in \mathcal{I}$ and $0$ otherwise, and $\mathcal{I}^c$ is the complement of $\mathcal{I}$. By~\eqref{eq:box_inactive_local}, the minimizer is $S^*(Z) = P_{\mathcal{I}}(D - Z)$, which is feasible because the box constraint is inactive on $\mathcal{I}$ in this neighborhood. Note that $f(\widetilde{S}, \widetilde{Z}) = g(\widetilde{Z})$ since $\widetilde{S} = P_{\mathcal{I}}(D - \widetilde{Z})$ by Theorem~\ref{thm:accumulation_optimality}(i) and~\eqref{eq:box_inactive_local}.

	Now consider $(S, Z) = (\widetilde{S} + \Delta_S, \widetilde{Z} + \Delta_Z)$ with $\|(S, Z) - (\widetilde{S}, \widetilde{Z})\|_F \le \varepsilon$ and $\operatorname{rank}(Z) \le p$.

	We first consider the situation where $\operatorname{supp}(S) \neq \mathcal{I}$. By the $\delta$-gap, for $\|\Delta_Z\|_\infty < \varepsilon$, the unique global minimizer of the $S$-subproblem $\min_S f(S, Z)$ is $\mathcal{H}_{\lambda,\xi}(D - Z)$ with support $\mathcal{I}$. Since $S$ has a different support, $S$ is not the global minimizer. By the entrywise separability in Theorem~\ref{thm:S_solution}, the objective decomposes entrywise as
	\[
	f(S, Z) = \sum_{i,j} \biggl[\frac{1}{2}\bigl(r_{ij} - s_{ij}\bigr)^2 + \lambda \cdot \mathbf{1}_{s_{ij} \neq 0}\biggr].
	\]
	On $\mathcal{I}$, the minimizer is $s_{ij} = r_{ij}$, which is feasible by~\eqref{eq:box_inactive_local}, giving cost $\lambda$ per entry. On $\mathcal{I}^c$, the minimizer is $s_{ij} = 0$, giving cost $\frac{1}{2}r_{ij}^2$ per entry. The minimum value is
	\[
	g(Z) = \frac{1}{2}\|(D - Z)_{\mathcal{I}^c}\|_F^2 + \lambda|\mathcal{I}|.
	\]
	For any $S$ with a different support, at least one entry is suboptimal, and the cost of the incorrect branch exceeds that of the correct (optimal) branch by at least $2\delta$ by the continuity argument above. Therefore
	\[
	f(S, Z) \ge g(Z) + 2\delta.
	\]

	Next we relate $g(Z)$ to $g(\widetilde{Z})$. By Theorem~\ref{thm:accumulation_optimality}(ii), $\widetilde{Z}$ is the global minimizer of the $Z$-subproblem at $\widetilde{S}$, meaning
	\[
	\frac{1}{2}\|D - \widetilde{S} - \widetilde{Z}\|_F^2 \le \frac{1}{2}\|D - \widetilde{S} - Z\|_F^2
	\]
	for all $Z$ with $\operatorname{rank}(Z) \le p$. Adding $\lambda\|\widetilde{S}\|_0 = \lambda|\mathcal{I}|$ to both sides preserves the inequality, giving
	\[
	f(\widetilde{S}, \widetilde{Z}) \le f(\widetilde{S}, Z).
	\]
	Since $\widetilde{S} = (D - \widetilde{Z})_{\mathcal{I}}$ by Theorem~\ref{thm:accumulation_optimality}(i), the pair $(\widetilde{S}, \widetilde{Z})$ achieves the minimum of $f$ over $\{S \mid \operatorname{supp}(S) = \mathcal{I}\}$ at $Z = \widetilde{Z}$, so $f(\widetilde{S}, \widetilde{Z}) = g(\widetilde{Z})$. Therefore
	\[
	f(\widetilde{S}, Z) \ge g(\widetilde{Z}).
	\]
	To expand $f(\widetilde{S}, Z)$, we split the Frobenius norm into the $\mathcal{I}$ and $\mathcal{I}^c$ components. On $\mathcal{I}$, we have $\widetilde{s}_{ij} = \widetilde{r}_{ij}$ by Theorem~\ref{thm:accumulation_optimality}(i), so
	\[
	(r_{ij} - \widetilde{s}_{ij}) = r_{ij} - \widetilde{r}_{ij} = \widetilde{z}_{ij} - z_{ij}.
	\]
	On $\mathcal{I}^c$, we have $\widetilde{s}_{ij} = 0$, so $(r_{ij} - \widetilde{s}_{ij}) = r_{ij}$. Therefore
	\[
	f(\widetilde{S}, Z) = \frac{1}{2}\|\widetilde{Z}_{\mathcal{I}} - Z_{\mathcal{I}}\|_F^2 + \frac{1}{2}\|(D - Z)_{\mathcal{I}^c}\|_F^2 + \lambda|\mathcal{I}| = \frac{1}{2}\|\widetilde{Z}_{\mathcal{I}} - Z_{\mathcal{I}}\|_F^2 + g(Z).
	\]
	Substituting into $f(\widetilde{S}, Z) \ge g(\widetilde{Z})$ gives
	\[
	\frac{1}{2}\|\widetilde{Z}_{\mathcal{I}} - Z_{\mathcal{I}}\|_F^2 + g(Z) \ge g(\widetilde{Z}), \quad \text{hence} \quad g(Z) \ge g(\widetilde{Z}) - \frac{1}{2}\|\widetilde{Z}_{\mathcal{I}} - Z_{\mathcal{I}}\|_F^2.
	\]
	Combining with $f(S, Z) \ge g(Z) + 2\delta$,
	\[
	f(S, Z) \ge g(\widetilde{Z}) - \frac{1}{2}\|\widetilde{Z}_{\mathcal{I}} - Z_{\mathcal{I}}\|_F^2 + 2\delta.
	\]
	Since $\|\widetilde{Z}_{\mathcal{I}} - Z_{\mathcal{I}}\|_F^2 \le \|\Delta_Z\|_F^2 \le \varepsilon^2$, we have
	\[
	f(S, Z) - f(\widetilde{S}, \widetilde{Z}) \ge 2\delta - \frac{1}{2}\varepsilon^2 > 0
	\]
	for $\varepsilon < 2\sqrt{\delta}$. This proves strict optimality for support-changing perturbations.

	We now turn to the situation where $\operatorname{supp}(S) = \mathcal{I}$. Then $f(S, Z) \ge g(Z)$, and it remains to show $g(Z) \ge g(\widetilde{Z})$ locally.

	If $\operatorname{rank}(\widetilde{Z}) < p$, then by Theorem~\ref{thm:accumulation_optimality}(iii), $\widetilde{Z} = D - \widetilde{S}$, which gives $(D - \widetilde{Z})_{\mathcal{I}^c} = \widetilde{S}_{\mathcal{I}^c} = \boldsymbol{O}$ and $g(\widetilde{Z}) = \lambda|\mathcal{I}|$. Since $g(Z) = \frac{1}{2}\|(D - Z)_{\mathcal{I}^c}\|_F^2 + \lambda|\mathcal{I}| \ge \lambda|\mathcal{I}| = g(\widetilde{Z})$ for all $Z$, we obtain $f(S, Z) \ge g(Z) \ge g(\widetilde{Z}) = f(\widetilde{S}, \widetilde{Z})$.

	If $\operatorname{rank}(\widetilde{Z}) = p$, the first-order condition~\eqref{eq:Z_stationarity} in Theorem~\ref{thm:accumulation_optimality}(iii) ensures that $\widetilde{Z}$ is a stationary point of $g$ on the rank-$p$ manifold $\mathcal{M}_p$. To lift this manifold statement to the whole feasible set $\{Z : \operatorname{rank}(Z) \le p\}$, we use the following observation based on the Eckart--Young--Mirsky theorem. Let $\mathcal{R}_{\le k} := \{Z \in \mathbb{R}^{m\times n} : \operatorname{rank}(Z) \le k\}$. Then $\mathcal{R}_{\le p} = \mathcal{M}_p \cup \mathcal{R}_{\le p-1}$ with $\mathcal{M}_p \cap \mathcal{R}_{\le p-1} = \varnothing$. Since $\operatorname{rank}(\widetilde{Z}) = p$, the Eckart--Young--Mirsky theorem gives $\operatorname{dist}_F\!\bigl(\widetilde{Z}, \mathcal{R}_{\le p-1}\bigr) = \sigma_p(\widetilde{Z}) > 0$. Hence every matrix of rank strictly less than $p$ lies at Frobenius distance at least $\sigma_p(\widetilde{Z})$ from $\widetilde{Z}$. Therefore $B\bigl(\widetilde{Z}, \sigma_p(\widetilde{Z})\bigr) \cap \mathcal{R}_{\le p-1} = \varnothing$ and
	\[
	B\bigl(\widetilde{Z}, \sigma_p(\widetilde{Z})\bigr) \cap \mathcal{R}_{\le p} \;=\; B\bigl(\widetilde{Z}, \sigma_p(\widetilde{Z})\bigr) \cap \mathcal{M}_p,
	\]
	as illustrated in Figure~\ref{fig:rank_strata}.
	Thus, for any neighborhood radius $0 < \varepsilon < \sigma_p(\widetilde{Z})$, every feasible point with $\operatorname{rank}(Z) \le p$ and $\|Z - \widetilde{Z}\|_F < \varepsilon$ must have rank exactly $p$ and therefore belongs to $\mathcal{M}_p$. That is, the feasible set coincides with $\mathcal{M}_p$ on $B(\widetilde{Z}, \varepsilon)$. Local optimality of $g$ at $\widetilde{Z}$ on $\mathcal{M}_p$ then follows from the second-order sufficient condition $\operatorname{Hess}_{\mathcal{M}_p} g(\widetilde{Z}) \succ 0$ assumed in Theorem~\ref{thm:local_optimal}, which, in view of the preceding sentence, lifts the manifold local optimality to local optimality over the rank-at-most-$p$ feasible set and guarantees $g(Z) \ge g(\widetilde{Z})$ for all $Z$ with $\operatorname{rank}(Z) \le p$ in a neighborhood of $\widetilde{Z}$. This gives $f(S, Z) \ge g(Z) \ge g(\widetilde{Z}) = f(\widetilde{S}, \widetilde{Z})$.

	Combining both situations, $(\widetilde{S}, \widetilde{Z})$ is a strict local minimizer with respect to support-changing perturbations under non-degeneracy and the box-inactivity condition~\eqref{eq:box_inactivity}. For same-support perturbations, $(\widetilde{S}, \widetilde{Z})$ is a local minimizer when $\operatorname{rank}(\widetilde{Z}) < p$, and a strict local minimizer when $\operatorname{rank}(\widetilde{Z}) = p$ and the second-order condition $\operatorname{Hess}_{\mathcal{M}_p} g(\widetilde{Z}) \succ 0$ holds.
\end{proof}

\begin{figure}[htbp]
\centering
\begin{tikzpicture}[scale=0.98]
	% feasible set: R_{<= p}
	\draw[thick] (0.4,0.3) rectangle (9.2,5.2);
	\node at (2.95,4.72) {$\mathcal{R}_{\le p}$, $\operatorname{rank}(Z) \le p$};
	% singular stratum: R_{<= p-1}
	\draw[thick] (2.7,2.4) circle (1.7);
	\node at (2.7,2.74) {$\mathcal{R}_{\le p-1}$};
	\node at (2.7,2.20) {\small $\operatorname{rank}(Z) \le p-1$};
	% remaining part of the feasible set: the rank-p manifold
	\node at (5.35,1.55) {$\mathcal{M}_p$};
	% distance segment: nearest point on the big disk -> Z~
	\draw (4.4,2.4) -- (7.6,2.4);
	\fill (4.4,2.4) circle (1.5pt);
	\fill (7.6,2.4) circle (1.5pt);
	\node at (5.0,2.82) {$\sigma_p(\widetilde{Z})$};
	% small neighborhood of Z~, separated from the big disk by a positive gap
	\draw[thick] (7.6,2.4) circle (1.1);
	\node at (7.6,2.92) {$\widetilde{Z}$};
	\node at (7.6,1.92) {\small $\operatorname{rank}(\widetilde{Z}) = p$};
\end{tikzpicture}
\caption{Geometry of the proof of Theorem~\ref{thm:local_optimal}. The distance from $\widetilde{Z}$ to the singular stratum $\mathcal{R}_{\le p-1}$ equals $\sigma_p(\widetilde{Z})$ by the Eckart--Young--Mirsky theorem, so the ball $B\bigl(\widetilde{Z}, \sigma_p(\widetilde{Z})\bigr)$ avoids $\mathcal{R}_{\le p-1}$ and the feasible set coincides with $\mathcal{M}_p$ near $\widetilde{Z}$.}
\label{fig:rank_strata}
\end{figure}

\begin{proof}[Proof of Theorem~\ref{thm:linear_conv}]
	By the non-degeneracy condition and the strict box-inactivity condition~\eqref{eq:box_inactivity}, there exists $\varepsilon > 0$ such that for all $Z$ with $\|Z - \widetilde{Z}\|_\infty < \varepsilon$, the support of $\mathcal{H}_{\lambda,\xi}(D - Z)$ equals $\mathcal{I}$ and the box constraint is inactive on $\mathcal{I}$, so that $\mathcal{H}_{\lambda,\xi}(D - Z) = P_{\mathcal{I}}(D - Z)$ by~\eqref{eq:box_inactive_local}. In this neighborhood, the $Z$-update defines the fixed-point map
	\begin{equation}\label{eq:fixed_point_map}
	Z^{(k+1)} = F\bigl(Z^{(k)}\bigr), \qquad F(Z) := \mathcal{T}_p\bigl(P_{\mathcal{I}^c}D + P_{\mathcal{I}}Z\bigr).
	\end{equation}
	Since $\widetilde{S} = P_{\mathcal{I}}(D - \widetilde{Z})$, we have $P_{\mathcal{I}^c}D + P_{\mathcal{I}}\widetilde{Z} = D - \widetilde{S} =: \widetilde{A}$, and by Theorem~\ref{thm:accumulation_optimality}(ii), $\widetilde{Z} = \mathcal{T}_p(\widetilde{A})$. Hence $F(\widetilde{Z}) = \widetilde{Z}$, so $\widetilde{Z}$ is a fixed point of $F$.

	By the singular value gap condition, $\sigma_p(\widetilde{A}) > \sigma_{p+1}(\widetilde{A})$, so $\mathcal{T}_p$ is single-valued and continuously differentiable in a neighborhood of $\widetilde{A}$. Its Fr\'echet derivative at $\widetilde{A}$ is $D\mathcal{T}_p(\widetilde{A})$, and by the chain rule,
	\[
	DF(\widetilde{Z}) = D\mathcal{T}_p(\widetilde{A}) \circ P_{\mathcal{I}}.
	\]
	The contraction condition~\eqref{eq:contraction} gives $\|DF(\widetilde{Z})\|_{\mathrm{op}} = q < 1$. By the continuity of $DF$, there exists $\varepsilon > 0$ small enough that the following conditions hold.
	\begin{enumerate}
		\item[\textup{(a)}] The ball $B_\varepsilon(\widetilde{Z})$ is contained in the neighborhood where the support is stable and the box constraint is inactive.
		\item[\textup{(b)}] $\mathcal{T}_p$ remains single-valued and smooth in the corresponding $\widetilde{A}$-neighborhood.
		\item[\textup{(c)}] $\|DF(Z)\|_{\mathrm{op}} \le \varrho < 1$ for all $Z \in B_\varepsilon(\widetilde{Z})$, where $\varrho \in (q, 1)$.
	\end{enumerate}
	Since Frobenius-norm balls are convex, the mean-value inequality gives
	\begin{equation}\label{eq:contraction_estimate}
	\|F(Z) - \widetilde{Z}\|_F = \|F(Z) - F(\widetilde{Z})\|_F \le \varrho\,\|Z - \widetilde{Z}\|_F, \qquad Z \in B_\varepsilon(\widetilde{Z}).
	\end{equation}
	In particular, $F$ maps $B_\varepsilon(\widetilde{Z})$ into itself.

	We now use only the fact that $\widetilde{Z}$ is an accumulation point. Hence there exists a subsequence $Z^{(k_j)} \to \widetilde{Z}$, so for some $j_0$ we have $Z^{(k_{j_0})} \in B_\varepsilon(\widetilde{Z})$. Set $\kappa_0 := k_{j_0}$. By~\eqref{eq:contraction_estimate}, $F$ maps $B_\varepsilon(\widetilde{Z})$ into itself, so $Z^{(\kappa_0 + 1)} = F(Z^{(\kappa_0)}) \in B_\varepsilon(\widetilde{Z})$. By induction, $Z^{(k)} \in B_\varepsilon(\widetilde{Z})$ for all $k \ge \kappa_0$, and
	\[
	\|Z^{(k)} - \widetilde{Z}\|_F \le \varrho^{\,k - \kappa_0}\,\|Z^{(\kappa_0)} - \widetilde{Z}\|_F, \qquad \forall\, k \ge \kappa_0.
	\]
	This is~\eqref{eq:linear_rate} with $\varphi = \|Z^{(\kappa_0)} - \widetilde{Z}\|_F$ and $q$ replaced by $\varrho$, which can be taken arbitrarily close to $q$.
\end{proof}

\begin{proof}[Proof of Corollary~\ref{cor:S_conv}]
	For $k \ge \kappa_0$, the support of $S^{(k)}$ equals $\mathcal{I}$, so $S^{(k)} = P_{\mathcal{I}}(D - Z^{(k-1)})$ and $\widetilde{S} = P_{\mathcal{I}}(D - \widetilde{Z})$. Throughout, we abbreviate the primal error by $\Delta Z^{(k)} := Z^{(k)} - \widetilde{Z}$. Therefore
	\[
	\|S^{(k)} - \widetilde{S}\|_F = \|P_{\mathcal{I}}(Z^{(k-1)} - \widetilde{Z})\|_F \le \|P_{\mathcal{I}}\|_{\mathrm{op}}\,\|\Delta Z^{(k-1)}\|_F \le \|\Delta Z^{(k-1)}\|_F,
	\]
	since $P_{\mathcal{I}}$ is an orthogonal projection with operator norm $1$. Let $\varrho \in (q, 1)$ be as in Theorem~\ref{thm:linear_conv}. We must account for an index shift. Since $S^{(k)}$ is obtained from $Z^{(k-1)}$, the inequality above controls $\|S^{(k)} - \widetilde{S}\|_F$ by the error at iteration $k-1$, so the rate of~\eqref{eq:linear_rate} has to be invoked at index $k-1$ rather than at index $k$. Accordingly, for $k \ge \kappa_0 + 1$, applying~\eqref{eq:linear_rate} at index $k-1$ and collecting the powers of $\varrho$ gives
	\begin{equation}\label{eq:S_bound_shift}
		\|S^{(k)} - \widetilde{S}\|_F \le \|\Delta Z^{(k-1)}\|_F \le \varphi\varrho^{\,k-1-\kappa_0} = \varphi\varrho^{-1}\,\varrho^{\,k-\kappa_0}.
	\end{equation}
	Since $\varrho^{-1} > 1$ is a fixed constant, the prefactor $\varphi\varrho^{-1}$ is independent of $k$. Enlarging $\varphi$ to $\max\{\varphi\varrho^{-1}, \|S^{(\kappa_0)} - \widetilde{S}\|_F\}$ therefore covers both the factor $\varrho^{-1}$ and the single remaining index $k = \kappa_0$, and yields~\eqref{eq:S_linear_rate}.

	For the objective values, define $R^{(k)} := D - S^{(k)} - Z^{(k)}$ and $\widetilde{R} := D - \widetilde{S} - \widetilde{Z}$. Since $S^{(k)} = P_{\mathcal{I}}(D - Z^{(k-1)})$ and $\widetilde{S} = P_{\mathcal{I}}(D - \widetilde{Z})$,
	\begin{align*}
		R^{(k)} - \widetilde{R} &= \bigl(D - P_{\mathcal{I}}(D - Z^{(k-1)}) - Z^{(k)}\bigr) - \bigl(D - P_{\mathcal{I}}(D - \widetilde{Z}) - \widetilde{Z}\bigr) \\
		&= P_{\mathcal{I}}\bigl(Z^{(k-1)} - \widetilde{Z}\bigr) - \bigl(Z^{(k)} - \widetilde{Z}\bigr) = P_{\mathcal{I}}(\Delta Z^{(k-1)}) - \Delta Z^{(k)}.
	\end{align*}
	By the triangle inequality and $\|P_{\mathcal{I}}\|_{\mathrm{op}} = 1$, for $k \ge \kappa_0 + 1$,
	\begin{equation}\label{eq:resid_bound}
		\|R^{(k)} - \widetilde{R}\|_F \le \|\Delta Z^{(k-1)}\|_F + \|\Delta Z^{(k)}\|_F \le \varphi\bigl(\varrho^{k-1-\kappa_0} + \varrho^{k-\kappa_0}\bigr) = \varphi(1 + \varrho)\,\varrho^{k-1-\kappa_0}.
	\end{equation}
	By Theorem~\ref{thm:monotone}, the iterates are bounded, so there exists $\eta > 0$ such that $\|R^{(k)}\|_F \le \eta$ and $\|\widetilde{R}\|_F \le \eta$ for all $k \ge \kappa_0$. Using the identity $|a^2 - b^2| = (a+b)|a-b|$,
	\begin{align*}
		\bigl|f\bigl(S^{(k)}, Z^{(k)}\bigr) - f\bigl(\widetilde{S}, \widetilde{Z}\bigr)\bigr| &= \tfrac{1}{2}\Bigl|\|R^{(k)}\|_F^2 - \|\widetilde{R}\|_F^2\Bigr| = \tfrac{1}{2}\bigl(\|R^{(k)}\|_F + \|\widetilde{R}\|_F\bigr)\,\|R^{(k)} - \widetilde{R}\|_F \\
		&\le \eta\,\|R^{(k)} - \widetilde{R}\|_F \le \eta\,\varphi(1+\varrho)\,\varrho^{k-1-\kappa_0} = \varphi'\,\varrho^{k-\kappa_0},
	\end{align*}
	where $\varphi' = \eta\varphi(1+\varrho)/\varrho$ is a constant independent of $k$. Enlarging $\varphi'$ if necessary covers the index $k = \kappa_0$.
\end{proof}

\begin{proof}[Proof of Theorem~\ref{thm:incoherence_generic}]
	Let $\widetilde{Z} = \widetilde{U}\widetilde{\Sigma}\widetilde{V}^{\mathrm{T}}$ with $\widetilde{U} \in \mathbb{R}^{m \times p}$, $\widetilde{V} \in \mathbb{R}^{n \times p}$ having orthonormal columns. The Fr\'echet derivative $D\mathcal{T}_p(\widetilde{A})$ decomposes as
	\[
	D\mathcal{T}_p(\widetilde{A}) = P_{T_{\widetilde{Z}}\mathcal{M}_p} + \mathcal{L}(\widetilde{R}),
	\]
	where $\mathcal{L}(\widetilde{R})$ is a linear operator depending on the residual $\widetilde{R} = \widetilde{A} - \widetilde{Z}$, satisfying $\|\mathcal{L}(\widetilde{R})\|_{\mathrm{op}} \le C_1\|\widetilde{R}\|_F / \sigma_p(\widetilde{A})$. When $\widetilde{R} = 0$, $\mathcal{L}(\widetilde{R}) = 0$ and $D\mathcal{T}_p(\widetilde{A})$ reduces to $P_{T_{\widetilde{Z}}\mathcal{M}_p}$.

	The tangent space projection decomposes as
	\[
	P_{T_{\widetilde{Z}}\mathcal{M}_p}(\Delta) = \widetilde{U}\widetilde{U}^{\mathrm{T}}\Delta + \Delta\widetilde{V}\widetilde{V}^{\mathrm{T}} - \widetilde{U}\widetilde{U}^{\mathrm{T}}\Delta\widetilde{V}\widetilde{V}^{\mathrm{T}},
	\]
	so by the triangle inequality,
	\begin{equation}\label{eq:three_terms}
		q \le \tau_1 + \tau_2 + \tau_3 + C_1\|\widetilde{R}\|_F / \sigma_p(\widetilde{A}),
	\end{equation}
	where
	\[
	\tau_1 := \|\widetilde{U}\widetilde{U}^{\mathrm{T}} \circ P_{\mathcal{I}}\|_{\mathrm{op}}, \quad \tau_2 := \|\widetilde{V}\widetilde{V}^{\mathrm{T}} \circ P_{\mathcal{I}}\|_{\mathrm{op}}, \quad \tau_3 := \|\widetilde{U}\widetilde{U}^{\mathrm{T}} P_{\mathcal{I}}(\cdot) \widetilde{V}\widetilde{V}^{\mathrm{T}}\|_{\mathrm{op}}.
	\]

	\emph{Bound on $\tau_1$.} For any $X \in \mathbb{R}^{m \times n}$ with $\|X\|_F = 1$,
	\begin{align*}
		\|\widetilde{U}\widetilde{U}^{\mathrm{T}} P_{\mathcal{I}}(X)\|_F^2 &= \|\widetilde{U}^{\mathrm{T}} P_{\mathcal{I}}(X)\|_F^2 = \sum_{l=1}^{p} \sum_{j=1}^{n}\left(\sum_{\substack{i=1 \\ (i,j) \in \mathcal{I}}}^{m} \widetilde{u}_{il}\,x_{ij}\right)^2 \\
		&\le \sum_{l=1}^{p} \sum_{j=1}^{n}\left(\sum_{\substack{i=1 \\ (i,j) \in \mathcal{I}}}^{m} \widetilde{u}_{il}^2\right)\!\left(\sum_{\substack{i=1 \\ (i,j) \in \mathcal{I}}}^{m} x_{ij}^2\right) \\
		&= \sum_{j=1}^{n}\left(\sum_{\substack{i=1 \\ (i,j) \in \mathcal{I}}}^{m} x_{ij}^2\right)\!\left(\sum_{l=1}^{p}\sum_{\substack{i=1 \\ (i,j) \in \mathcal{I}}}^{m} \widetilde{u}_{il}^2\right).
	\end{align*}
Note that the penultimate inequality above follows from the Cauchy--Schwarz inequality.
	The row coherence condition $\|\widetilde{u}_i\|_2 \le \sqrt{\mu_0 p/m}$ gives $\sum_{l=1}^{p} \widetilde{u}_{il}^2 \le \mu_0 p/m$ for each $i$, so
	\[
	\sum_{l=1}^{p}\sum_{\substack{i=1 \\ (i,j) \in \mathcal{I}}}^{m} \widetilde{u}_{il}^2 \le \frac{\mu_0 p}{m}\,n_j, \qquad n_j := |\{i \in \{1,\ldots,m\} : (i,j) \in \mathcal{I}\}|.
	\]
	Since $\mathcal{I}$ is drawn uniformly from all subsets of size $s = \lfloor \alpha mn \rfloor$, each $n_j$ follows a hypergeometric distribution with mean $\alpha m$. By the Hoeffding bound for hypergeometric random variables \cite{Vershynin2018} and the comparison principle for sampling without replacement \cite{GrossNesme2010}, with probability at least $1 - 2e^{-c\alpha m}$,
	\[
	n_j \le \alpha m + \varphi_0\sqrt{m\log(mn)} \le 2\alpha m \qquad \text{for all } j = 1, \ldots, n,
	\]
	provided $\alpha \ge \varphi_0\sqrt{\log(mn)/m}$. Substituting into the bound above,
	\[
	\|\widetilde{U}\widetilde{U}^{\mathrm{T}} P_{\mathcal{I}}(X)\|_F^2 \le 2\alpha\mu_0 p\sum_{j=1}^{n}\sum_{\substack{i=1 \\ (i,j) \in \mathcal{I}}}^{m} x_{ij}^2 = 2\alpha\mu_0 p\,\|P_{\mathcal{I}}(X)\|_F^2 \le 2\alpha\mu_0 p.
	\]
	Note that the factor $m$ from $n_j \le 2\alpha m$ cancels with the $1/m$ from the coherence bound, yielding a dimension-free estimate.
	Taking the square root and the supremum over $\|X\|_F = 1$ yields
	\begin{equation}\label{eq:tau1_bound}
		\tau_1 \le \sqrt{2\alpha\mu_0 p}.
	\end{equation}

	\emph{Bound on $\tau_2$.} By the identical argument with $\widetilde{V}$ replacing $\widetilde{U}$ and $n$ replacing $m$,
	\begin{equation}\label{eq:tau2_bound}
		\tau_2 \le \sqrt{2\alpha\mu_0 p}.
	\end{equation}

	\emph{Bound on $\tau_3$.} For any $X$ with $\|X\|_F = 1$, using $\|\widetilde{U}\widetilde{U}^{\mathrm{T}}\|_{\mathrm{op}} = 1$,
	\[
	\|\widetilde{U}\widetilde{U}^{\mathrm{T}} P_{\mathcal{I}}(X)\widetilde{V}\widetilde{V}^{\mathrm{T}}\|_F \le \|\widetilde{U}\widetilde{U}^{\mathrm{T}}\|_{\mathrm{op}}\,\|P_{\mathcal{I}}(X)\widetilde{V}\widetilde{V}^{\mathrm{T}}\|_F = \|P_{\mathcal{I}}(X)\widetilde{V}\widetilde{V}^{\mathrm{T}}\|_F.
	\]
	Then we have $\|P_{\mathcal{I}}(X)\widetilde{V}\widetilde{V}^{\mathrm{T}}\|_F =\|\widetilde{V}\widetilde{V}^{\mathrm{T}}(P_{\mathcal{I}}(X))^{\mathrm{T}}\|_F = \|\widetilde{V}\widetilde{V}^{\mathrm{T}}P_{\mathcal{I}}(X^{\mathrm{T}})\|_F$, and the right-hand side is bounded by $\tau_2$ by definition. Combining this with~\eqref{eq:tau2_bound} gives $\tau_3 \le \tau_2 \le \sqrt{2\alpha\mu_0p}$, and
	\begin{equation}\label{eq:tau3_bound}
		\tau_3 \le \sqrt{2\alpha\mu_0 p}.
	\end{equation}

	\emph{Combining.} Substituting~\eqref{eq:tau1_bound}--\eqref{eq:tau3_bound} into~\eqref{eq:three_terms},
	\[
	q \le 3\sqrt{2\alpha\mu_0 p} + C_1\|\widetilde{R}\|_F / \sigma_p(\widetilde{A}) = C_0\sqrt{\alpha\mu_0 p} + C_1\|\widetilde{R}\|_F / \sigma_p(\widetilde{A}),
	\]
	where $C_0 = 3\sqrt{2}$. The probability bound follows from the union bound over the $n$ column-wise hypergeometric estimates and the $m$ row-wise estimates for $\tau_2$, yielding $1 - 2e^{-c\alpha\min\{m,n\}}$. If $\alpha\mu_0 p < 1/(2C_0^2)$ and $\|\widetilde{R}\|_F$ is sufficiently small relative to $\sigma_p(\widetilde{A})$, then $q < 1$, and the linear convergence rate $q < 1$ follows from Theorem~\ref{thm:linear_conv}.
\end{proof}

\begin{proof}[Proof of Theorem~\ref{thm:hessian}]
	For same-support perturbations with $\operatorname{rank}(\widetilde{Z}) = p$, strict local optimality requires a second-order sufficient condition. We derive the Riemannian Hessian of $g$ at $\widetilde{Z}$ on $\mathcal{M}_p$ and show that $q < 1$ guarantees its positive definiteness.

	Denote $T := T_{\widetilde{Z}}\mathcal{M}_p$, $P_T := P_{T_{\widetilde{Z}}\mathcal{M}_p}$, $\widetilde{A} := D - \widetilde{S}$, and $\widetilde{R} := \widetilde{A} - \widetilde{Z}$. Under the local box-inactivity condition~\eqref{eq:box_inactivity}, $\widetilde{S} = P_{\mathcal{I}}(D - \widetilde{Z})$, so $\widetilde{A} = P_{\mathcal{I}^c}D + P_{\mathcal{I}}\widetilde{Z}$. Define the auxiliary function
	\[
	h(Z) := \tfrac{1}{2}\|\widetilde{A} - Z\|_F^2, \qquad Z \in \mathcal{M}_p.
	\]
	By the orthogonality of $P_{\mathcal{I}}$ and $P_{\mathcal{I}^c}$, for any $Z \in \mathcal{M}_p$ the identity
	\begin{equation}\label{eq:g_decomp}
		g(Z) = h(Z) - \tfrac{1}{2}\|P_{\mathcal{I}}(Z - \widetilde{Z})\|_F^2 + \lambda|\mathcal{I}|
	\end{equation}
	holds exactly. This decomposition is key, because $h$ describes the rank-$p$ nearest-point problem to $\widetilde{A}$, while the second term subtracts the support contribution.

	We first analyse the Hessian of $h$ and the derivative of the truncated SVD operator $\mathcal{T}_p$, then assemble the Riemannian Hessian of $g$ from the decomposition~\eqref{eq:g_decomp}, and finally show that the contraction condition $q < 1$ guarantees its positive definiteness.

	Since $\widetilde{Z} = \mathcal{T}_p(\widetilde{A})$ and $\sigma_p(\widetilde{A}) > \sigma_{p+1}(\widetilde{A})$, $\widetilde{Z}$ is a non-degenerate local minimizer of $h$ on $\mathcal{M}_p$. Define $\operatorname{Hess}_0 := \operatorname{Hess}_{\mathcal{M}_p} h(\widetilde{Z}) : T \to T$. Using the second fundamental form of $\mathcal{M}_p$ at $\widetilde{Z}$, the curvature contribution is bounded by
	\[
	\eta := \frac{\|\widetilde{R}\|_{\mathrm{op}}}{\sigma_p(\widetilde{Z})} = \frac{\sigma_{p+1}(\widetilde{A})}{\sigma_p(\widetilde{A})} < 1,
	\]
	where the second equality follows because $\widetilde{Z}$ retains the top $p$ singular values of $\widetilde{A}$ while $\widetilde{R}$ contains the remainder. For any $\Delta \in T$,
	\begin{equation}\label{eq:H0_bound}
		\langle \operatorname{Hess}_0 \Delta, \Delta \rangle \ge (1 - \eta)\,\|\Delta\|_F^2 > 0,
	\end{equation}
	so $\operatorname{Hess}_0 \succ 0$, i.e., $\operatorname{Hess}_0$ is positive definite. By differentiating the first-order optimality condition $\operatorname{grad}_{\mathcal{M}_p}\frac{1}{2}\|A(t) - Z\|_F^2\big|_{Z = Z(t)} = 0$ at $t = 0$ with $A(t) = \widetilde{A} + tH$ and $Z(t) = \mathcal{T}_p(A(t))$, one obtains the nearest-point projection derivative formula
	\begin{equation}\label{eq:Dtp_formula}
		D\mathcal{T}_p(\widetilde{A}) = \operatorname{Hess}_0^{-1} P_T,
	\end{equation}
	which is the fixed-rank specialization of the general embedded-manifold projection derivative (see \cite{Feppon2018}).

	With $\operatorname{Hess}_0$ and $D\mathcal{T}_p(\widetilde{A})$ in hand, we now assemble the Riemannian Hessian of $g$.

	To that end, define the operator
	\[
	\mathcal{C} := P_T P_{\mathcal{I}}\big|_T : T \to T.
	\]
	$\mathcal{C}$ is self-adjoint and positive semidefinite, since for any $\Delta \in T$,
	\[
	\langle \Delta, \mathcal{C}\Delta \rangle = \langle \Delta, P_{\mathcal{I}}\Delta \rangle = \|P_{\mathcal{I}}\Delta\|_F^2 \ge 0.
	\]
	The function $r_{\mathcal{I}}(Z) := \frac{1}{2}\|P_{\mathcal{I}}(Z - \widetilde{Z})\|_F^2$ has vanishing Euclidean gradient at $\widetilde{Z}$, hence no curvature correction. Its Riemannian Hessian on $T$ is exactly $\mathcal{C}$. By the decomposition~\eqref{eq:g_decomp},
	\begin{equation}\label{eq:Hess_g}
		\operatorname{Hess}_{\mathcal{M}_p} g(\widetilde{Z}) = \operatorname{Hess}_0 - \mathcal{C}.
	\end{equation}
	This formula is independent of second-fundamental-form sign conventions and connects directly to the fixed-point derivative $D\mathcal{T}_p(\widetilde{A})$.

	The Hessian of $g$ is now expressed as $\operatorname{Hess}_0 - \mathcal{C}$, a difference of two positive semidefinite operators. It remains to show that this difference is positive definite under the contraction condition $q < 1$.

	By~\eqref{eq:Dtp_formula}, for tangent directions $\Delta \in T$,
	\[
	D\mathcal{T}_p(\widetilde{A}) P_{\mathcal{I}}\big|_T = \operatorname{Hess}_0^{-1} \mathcal{C},
	\]
	so
	\begin{equation}\label{eq:spectral_bound}
		r(\operatorname{Hess}_0^{-1} \mathcal{C}) \le \|\operatorname{Hess}_0^{-1} \mathcal{C}\|_{\mathrm{op}} \le \|D\mathcal{T}_p(\widetilde{A}) P_{\mathcal{I}}\|_{\mathrm{op}} = q,
	\end{equation}
	where $r(\cdot)$ denotes the spectral radius. Since $\operatorname{Hess}_0 \succ 0$ and $\mathcal{C} \succeq 0$, the operator $\operatorname{Hess}_0^{-1} \mathcal{C}$ is similar to the self-adjoint positive semidefinite operator $\mathcal{K} := \operatorname{Hess}_0^{-1/2} \mathcal{C} \operatorname{Hess}_0^{-1/2} \succeq 0$, so both have the same non-negative real eigenvalues. If $q < 1$, then by~\eqref{eq:spectral_bound} $\lambda_{\max}(\mathcal{K}) < 1$, whence $I_T - \mathcal{K} \succ 0$ and
	\[
	\operatorname{Hess}_{\mathcal{M}_p} g(\widetilde{Z}) = \operatorname{Hess}_0 - \mathcal{C} = \operatorname{Hess}_0^{1/2}(I_T - \mathcal{K}) \operatorname{Hess}_0^{1/2} \succ 0.
	\]
	Thus $q < 1$ simultaneously guarantees the local linear convergence of Theorem~\ref{thm:linear_conv} and the strict local optimality at $(\widetilde{S}, \widetilde{Z})$.

	We also note that the contraction condition $q < 1$, and hence the positive definiteness of the Riemannian Hessian, is \emph{not} an empty assumption. Theorem~\ref{thm:incoherence_generic} shows that, under the random-support and bounded-coherence model, the bound $q \le C_0\sqrt{\alpha\mu_0 p} + C_1\|\widetilde{R}\|_F/\sigma_p(\widetilde{A})$ holds with probability at least $1 - 2e^{-c\alpha\min\{m,n\}}$. Consequently, whenever $\alpha\mu_0 p < 1/(2C_0^2)$ and the residual $\|\widetilde{R}\|_F$ is sufficiently small relative to $\sigma_p(\widetilde{A})$, one has $q < 1$ with the same probability, so the second-order sufficient condition invoked in Theorem~\ref{thm:local_optimal} is attained on a non-empty, near-full-probability set. The formal statement of this existence result is given in Corollary~\ref{cor:hessian_generic} below.

	When $\widetilde{R} = \boldsymbol{O}$ (i.e., $\operatorname{rank}(\widetilde{Z}) < p$), $\eta = 0$, $\operatorname{Hess}_0 = I_T$, $D\mathcal{T}_p(\widetilde{A}) = P_T$, and $q = \|P_T P_{\mathcal{I}}\|_{\mathrm{op}} =: \gamma$, recovering the original tangent-support incoherence analysis. In general, $q$ subsumes $\gamma$ by correctly incorporating the off-manifold truncated-SVD curvature through $\operatorname{Hess}_0$. For the perturbation theory underlying these bounds, see \cite{StewartSun1990} and \cite{Wedin1972}. For the role of incoherence in robust PCA, see \cite{Candes2011,Netrapalli2014}.
\end{proof}

\begin{proof}[Proof of Corollary~\ref{cor:hessian_generic}]
	(i) In the proof of Theorem~\ref{thm:hessian}, $\operatorname{Hess}_{\mathcal{M}_p} g(\widetilde{Z}) = \operatorname{Hess}_0^{1/2}(I_T - \mathcal{K})\operatorname{Hess}_0^{1/2}$ with $\operatorname{Hess}_0 \succ 0$ and $\mathcal{K} \succeq 0$. Hence $\operatorname{Hess}_{\mathcal{M}_p} g(\widetilde{Z}) \succ 0$ whenever $\lambda_{\max}(\mathcal{K}) < 1$. Since $\lambda_{\max}(\mathcal{K}) = r(\operatorname{Hess}_0^{-1}\mathcal{C}) \le q$ by~\eqref{eq:spectral_bound}, the condition $q < 1$ is sufficient for positive definiteness. The converse need not hold, because the spectral radius can be strictly smaller than the operator norm $q$.

	(ii) Theorem~\ref{thm:incoherence_generic} states that, under the stated hypotheses on $\mathcal{I}$ and on the coherence of the singular vectors, the bound
	\[
	q \le C_0\sqrt{\alpha\mu_0 p} + C_1\frac{\|\widetilde{R}\|_F}{\sigma_p(\widetilde{A})}
	\]
	holds with probability at least $1 - 2e^{-c\alpha\min\{m,n\}}$. The two assumptions $\alpha\mu_0 p < 1/(2C_0^2)$ and $\|\widetilde{R}\|_F/\sigma_p(\widetilde{A})$ sufficiently small make the right-hand side strictly less than $1$, so $q < 1$ on this event. By (i), $\operatorname{Hess}_{\mathcal{M}_p} g(\widetilde{Z}) \succ 0$ there.

	It remains to verify that the hypotheses just used are not empty. Random singular vectors satisfy the coherence bound with probability $1 - o(1)$ \cite{Candes2011}, the support $\mathcal{I}$ is random by construction, and choosing the amplitude bound $\xi > \max_{i,j}|s^*_{ij}|$ ensures that no nonzero entry of $\widetilde{S}$ is clipped, that is, $\|\widetilde{S}\|_\infty < \xi$. Hence instances satisfying all the hypotheses exist, and the bound above is not vacuous.
\end{proof}

\section{Additional sensitivity results}\label{app:sensitivity}

This appendix reports the sensitivity of $\ell_0$-AltMin to the rank parameter $p$, the initial penalty $\lambda_0$, and the amplitude bound $\xi$.

\begin{table}[H]
\centering
\caption{Recovery versus rank on synthetic data with $m = n = 1000$ and $\alpha = 0.05$. The upper panel lists the soft-thresholding baselines and the lower panel the convex and hard-thresholding baselines.}
\label{tab:rank}
\footnotesize
\setlength{\tabcolsep}{5pt}

\begin{tabular}{cccccccccc}
\toprule
$r$ & \multicolumn{3}{c}{$\ell_0$-AltMin} & \multicolumn{3}{c}{AltMin} & \multicolumn{3}{c}{LMaFit} \\
\cmidrule(lr){2-4} \cmidrule(lr){5-7} \cmidrule(lr){8-10}
 & err($Z$) & SE & iter & err($Z$) & SE & iter & err($Z$) & SE & iter \\
\midrule
$5$   & 1.2e-7 & 0.010\% & 124 & 7.3e-8 & 0.000\% & 36 & 7.3e-8 & 0.000\% & 36 \\
$15$  & 2.4e-7 & 0.008\% & 111 & 1.9e-7 & 0.000\% & 34 & 1.9e-7 & 0.000\% & 34 \\
$25$  & 2.9e-7 & 0.008\% & 108 & 2.0e-7 & 0.000\% & 34 & 2.0e-7 & 0.000\% & 34 \\
$35$  & 4.1e-7 & 0.010\% & 102 & 2.1e-7 & 0.000\% & 35 & 2.1e-7 & 0.000\% & 35 \\
$50$  & 3.0e-7 & 0.004\% & 104 & 4.8e-7 & 0.000\% & 45 & 4.8e-7 & 0.000\% & 45 \\
$75$  & 3.3e-1 & 1758\% & 133 & 9.3e-2 & 1899\% & 36 & 9.2e-2 & 1899\% & 36 \\
$100$ & 4.5e-1 & 1710\% & 125 & 2.0e-1 & 1899\% & 34 & 2.0e-1 & 1899\% & 34 \\
\midrule
$r$ & \multicolumn{3}{c}{IALM} & \multicolumn{3}{c}{ncRPCA} & & & \\
\cmidrule(lr){2-4} \cmidrule(lr){5-7}
 & err($Z$) & SE & iter & err($Z$) & SE & iter & & & \\
\midrule
$5$   & 3.0e-7 & 0.010\% & 63 & 2.2e-7 & 0.00\% & 248 & & & \\
$15$  & 5.8e-7 & 0.006\% & 46 & 4.5e-7 & 0.00\% & 848 & & & \\
$25$  & 5.7e-7 & 0.004\% & 41 & 1.0e-6 & 0.00\% & 1448 & & & \\
$35$  & 7.4e-7 & 0.002\% & 38 & 3.7e-7 & 0.002\% & 2049 & & & \\
$50$  & 6.0e-7 & 0.004\% & 36 & 1.2e-6 & 0.00\% & 2949 & & & \\
$75$  & 1.2e-6 & 0.026\% & 33 & 1.7e-6 & 0.068\% & 4450 & & & \\
$100$ & 8.3e-7 & 0.072\% & 33 & 1.1e-6 & 0.066\% & 5952 & & & \\
\bottomrule
\end{tabular}
\\[2pt]
\justifying\footnotesize Definitions as in Table~\ref{tab:synthetic}.
\end{table}

\begin{table}[H]
\centering
\caption{Sensitivity to the initial penalty $\lambda_0$ on synthetic data with $m = n = 1000$, $r = 25$, $\alpha = 0.05$. Here $\lambda_0^{\mathrm{base}} = 30/\sqrt{m} \approx 0.95$.}
\label{tab:lambda}
\footnotesize
\setlength{\tabcolsep}{5pt}

\begin{tabular}{cccccccc}
\toprule
$\lambda_0/\lambda_0^{\mathrm{base}}$ & \multicolumn{4}{c}{$\ell_0$-AltMin} & \multicolumn{3}{c}{ncRPCA} \\
\cmidrule(lr){2-5} \cmidrule(lr){6-8}
 & err($Z$) & err($S$) & SE & iter & err($Z$) & SE & iter \\
\midrule
$0.1$  & 1.7e-1 & 1.7e-1 & 1875\% & 133 & 1.03e-6 & 0.00\% & 1448 \\
$0.25$ & 2.2e-7 & 9.7e-7 & 0.006\% & 102 & 1.03e-6 & 0.00\% & 1448 \\
$0.5$  & 2.2e-7 & 9.7e-7 & 0.006\% & 104 & 1.03e-6 & 0.00\% & 1448 \\
$1$    & 2.9e-7 & 1.3e-6 & 0.008\% & 108 & 1.03e-6 & 0.00\% & 1448 \\
$2$    & 2.9e-7 & 1.3e-6 & 0.008\% & 114 & 1.03e-6 & 0.00\% & 1448 \\
$5$    & 2.9e-7 & 1.3e-6 & 0.008\% & 124 & 1.03e-6 & 0.00\% & 1448 \\
$10$   & 2.9e-7 & 1.3e-6 & 0.008\% & 129 & 1.03e-6 & 0.00\% & 1448 \\
\bottomrule
\end{tabular}
\\[2pt]
\justifying\footnotesize ncRPCA is independent of $\lambda_0$ by construction, hence the identical row.
\end{table}

\begin{table}[H]
\centering
\caption{Sensitivity to the box constraint parameter $\xi$ on synthetic data with $m = n = 1000$, $r = 25$, $\alpha = 0.05$. The maximum magnitude of true sparse entries is $\max|s^*_{ij}| \approx 101$.}
\label{tab:box}
\begin{tabular}{lrrrrr}
\toprule
$\xi$ & \% clipped & $\mathrm{err}(Z)$ & $\mathrm{err}(S)$ & $\mathrm{SE}$ (\%) & iter \\
\midrule
$1$ & $96.3$ & $3.65 \times 10^{-1}$ & $9.79 \times 10^{-1}$ & $1900$ & $200$ \\
$5$ & $82.3$ & $8.23 \times 10^{-1}$ & $1.06$ & $1900$ & $200$ \\
$10$ & $65.5$ & $1.28$ & $1.34$ & $1900$ & $200$ \\
$20$ & $37.3$ & $1.81$ & $1.81$ & $1900$ & $200$ \\
$30$ & $18.0$ & $1.22$ & $1.22$ & $1900$ & $200$ \\
$50$ & $2.7$ & $3.47 \times 10^{-1}$ & $3.47 \times 10^{-1}$ & $1900$ & $200$ \\
$100$ & $<0.1$ & $1.10 \times 10^{-3}$ & $1.10 \times 10^{-3}$ & $3.79$ & $172$ \\
$200$ & $0.0$ & $2.88 \times 10^{-7}$ & $1.30 \times 10^{-6}$ & $0.01$ & $108$ \\
$10^{10}$ & $0.0$ & $2.88 \times 10^{-7}$ & $1.30 \times 10^{-6}$ & $0.01$ & $108$ \\
\bottomrule
\end{tabular}
\end{table}

\bibliographystyle{siam}

\end{document}

%% file: ex_shared.tex
\newsiamremark{remark}{Remark} % 注记环境，绑定remark计数器
\newsiamremark{hypothesis}{Hypothesis}
\crefname{hypothesis}{Hypothesis}{Hypotheses}
\newsiamthm{claim}{Claim}
\newsiamremark{fact}{Fact}
\crefname{fact}{Fact}{Facts}
\newsiamremark{conjecture}{Conjecture}

\headers{\parbox{0.8\linewidth}{\centering Robust principal component analysis}}{Z.-K. Lyu, X. Liu, Q.-Y. Shu, L. Wang, and W. Zhang}

\title{Robust Principal Component Analysis via $\ell_0$ Alternating Minimization\thanks{Submitted to the editors DATE.
		\funding{The work of the third author was supported by the National Natural Science Foundation of China (Grant Nos.~12471441, U24A2001, 12125108, 12631014, and 12288201) and the Sichuan Natural Science Fund for Distinguished Young Scholars (Grant No.~2025NSFJQ0048).}}}

\author{Ze-Kun Lyu\thanks{School of Mathematical Sciences, Sichuan Normal University, Chengdu 610066, People's Republic of China
		(\email{zekunlyu@126.com}, \email{shuqy@sicnu.edu.cn}, \email{20240801096@stu.sicnu.edu.cn}). Corresponding author: Qian-Yu Shu.}
	\and Xin Liu\thanks{State Key Laboratory of Mathematical Sciences, Academy of Mathematics and Systems Science, Chinese Academy of Sciences, Beijing 100190, China, and School of Mathematical Sciences, University of Chinese Academy of Sciences, Beijing 100049, China
		(\email{liuxin@lsec.cc.ac.cn}).}
	\and Qian-Yu Shu\footnotemark[2]
	\and Lei Wang\thanks{Department of Applied Mathematics, The Hong Kong Polytechnic University, Hong Kong, China
		(\email{wlkings@lsec.cc.ac.cn}).}
	\and Wei Zhang\footnotemark[2]} 

\usepackage{amsopn}